\newcommand{\espf}{\vspace*{1ex}}
\newcommand{\dst}{\displaystyle}
\newcommand{\nidt}{\noindent}
\newcommand{\ov}[1]{\overline{#1}}
\newcommand{\wt}[1]{\widetilde{#1}}
\newcommand{\del}{\nabla}
\renewcommand{\epsilon}{\varepsilon}
\newcommand{\ii}{\infty}
\renewcommand{\phi}{\varphi}
\newcommand{\C}{\mathbb{C}}
\newcommand{\Cc}{\mathcal{C}}
\newcommand{\D}{\mathbb{D}}
\newcommand{\Dd}{\mathcal{D}}
\newcommand{\Ee}{\mathcal{E}}
\newcommand{\Gg}{\mathcal{G}}
\renewcommand{\H}{\mathbb{H}}
\renewcommand{\L}{\mathbb{L}}
\newcommand{\N}{\mathbb{N}}
\newcommand{\R}{\mathbb{R}}
\renewcommand{\S}{\mathbb{S}}
\newcommand{\Uu}{\mathcal{U}}
\newcommand{\Vv}{\mathcal{V}}
\newcommand{\lp}{\left(}
\newcommand{\rp}{\right)}
\newcommand{\lc}{\left[}
\newcommand{\rc}{\right]}
\newcommand{\lac}{\left\{}
\newcommand{\rac}{\right\}}
\newcommand{\lan}{\langle}
\newcommand{\ran}{\rangle}
\newcommand{\lb}{\left|}
\newcommand{\rb}{\right|}
\newcommand{\lr}{\left.}
\newcommand{\rr}{\right.}
\renewcommand{\a}{\forall}
\newcommand{\eq}{\ \Longleftrightarrow\ }
\newcommand{\into}{\rightarrow}
\newcommand*{\st}[1][\mathrel{|}]{%
  \ifthenelse{\equal{#1}{big}}{\mathrel{\big{|}}}{%
    \ifthenelse{\equal{#1}{Big}}{\mathrel{\st[Big]}}{%
      \ifthenelse{\equal{#1}{bigg}}{\mathrel{\bigg{|}}}{%
        \ifthenelse{\equal{#1}{Bigg}}{\mathrel{\Bigg{|}}}
      }%
    }%
  }%
}
\DeclareMathOperator{\argcosh}{argcosh}
\DeclareMathOperator{\argtanh}{argtanh}
\DeclareMathOperator{\Div}{div}
\newcommand{\der}[2][]{\frac{\partial#1}{\partial#2}}
\newcommand{\Lim}[2]{\lim_{#1\into#2}}
\renewcommand{\matrix}[2]{\lp\begin{array}{@{}*{#1}{c}@{}}#2\end{array}\rp}
\newcommand{\tend}[2]{\xrightarrow[#1\into#2]{}}
\newcommand{\ligne}[2]{\qbezier(#1)(#1)(#2)}
\theoremstyle{definition}
\newtheorem{definition}{Definition}[section]
\newtheorem{remark}[definition]{Remark}
\theoremstyle{plain}
\newtheorem{lemma}[definition]{Lemma}
\newtheorem{proposition}[definition]{Proposition}
\newtheorem{theorem}[definition]{Theorem}
\newtheorem*{theorem*}{Theorem}
\newtheorem{corollary}[definition]{Corollary}
\renewenvironment{proof}[1][]{\nidt\textit{Proof}#1.\ }{\hfill$\square$\bigskip}
\title{Deformations of constant mean curvature $1/ 2$ surfaces in $\H^2 \times \R$ with vertical ends at infinity}
\author{S{\'e}bastien Cartier and Laurent Hauswirth}
\begin{document}

\maketitle

\begin{abstract}

\nidt \itshape We study constant mean curvature $1/ 2$ surfaces in $\H^2 \times \R$ that admit a compactification of the mean curvature operator. We show that a particular family of complete entire graphs over $\H^2$ admits a structure of infinite dimensional manifold with local control on the behaviors at infinity. These graphs also appear to have a half-space property and we deduce a uniqueness result at infinity. Deforming non degenerate constant mean curvature $1/ 2$ annuli, we provide a large class of (non rotational) examples and construct (possibly embedded) annuli without axis, i.e. with two vertical, asymptotically rotational, non aligned ends.

\end{abstract}

\nidt \textit{Mathematics Subject Classification:} \emph{53A10, 53C42}.

\section{Introduction}

This paper concerns the theory of constant mean curvature (\emph{CMC} for short) surfaces $H= 1/ 2$ in $\H^2 \times \R$. The value $H= 1/ 2$ is critical in the sense that there is no compact CMC surface for $H \leq 1/ 2$ while for $H> 1/ 2$ there are rotational compact examples. A half-space theorem in $\H^2 \times \R$ (see \cite{HaRoSp}) proves that for CMC $H= 1/ 2$, complete multigraphs are entire graphs over $\H^2$. Entire graphs are classified by I.~Fern{\'a}ndez and P.~Mira \cite{FeMi2} and their moduli space is modeled on the set of quadratic holomorphic differential $Q$ defined on the complex plane $\C$ or the unit disk $\D$. The link between $Q$ and the geometry of the graph is not very well understood.

\medskip

We first deal with complete conformal immersions of the disk $\D$, properly immersed into the half-space $\H^2 \times \R_+$ ($x_3 \geq 0$), which are entire vertical graphs over $\H^2$. We assume that the third coordinate $x_3 \into +\ii$ on any diverging sequence of points in $\D$, which means the height function is proper. Up to this date, the only simply connected example is a rotational example called the \emph{hyperboloid} $S_0$. In the Poincar{\'e} disk model of $\H^2 \times \R$ ---~see~\eqref{eq:modpoinc} below~--- with polar coordinates $(r, \theta)$, a parametrization of $S_0$ as a graph over $\H^2$ is:
\[
(r, \theta) \in [0, 1) \times \S^1 \mapsto \lp re^{i\theta}, \frac{2}{\sqrt{1- r^2}} \rp \in \H^2 \times \R.
\]

We describe a family of examples endowed with a structure of infinite dimensional smooth manifold. The manifold structure arises from a suitable \emph{compactification} of the mean curvature operator at infinity (Theorem~\ref{thm:compcourb}) and is diffeomorphic to a codimension one submanifold of $\Cc^{2, \alpha}(\S^1) \times \R$ (Theorem~\ref{thm:structg}). This construction comes with a control of the asymptotic behavior in terms of the horizontal (hyperbolic) distance from the hyperboloid $S_0$, namely:

\begin{theorem*}[Theorem~\ref{thm:treibergsloc}]

For any small $\gamma \in \Cc^{2, \alpha}(\S^1)$ such that $e^{-\gamma}$ has unit $L^2(\S^1)$-norm, there exists a CMC-$1/ 2$ complete entire graph at asymptotic horizontal signed distance $2\gamma$ from $S_0$.

\end{theorem*}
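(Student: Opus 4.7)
The plan is to combine Theorems~\ref{thm:compcourb} and~\ref{thm:structg} with an explicit identification of the codimension one constraint cutting out the moduli space.

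By Theorem~\ref{thm:structg}, the moduli space $\Gg$ of complete CMC-$1/ 2$ entire graphs we consider is locally identified with a codimension one submanifold of $\Cc^{2, \alpha}(\S^1) \times \R$. With the coordinates suggested by the compactification of Theorem~\ref{thm:compcourb}, the $\Cc^{2, \alpha}(\S^1)$-factor should carry the boundary trace of the horizontal distance to $S_0$ (the factor $2$ being absorbed in the normalization), and the $\R$-factor is a remaining transverse parameter; the hyperboloid $S_0$ itself sits at some base point $(0, t_0)$.

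The central task is to write the defining equation of $\Gg$ explicitly near $(0, t_0)$. My plan is to derive it from an asymptotic conservation law: recasting the compactified mean curvature PDE in divergence form and testing it against a suitable vector field near $\S^1$ should yield a single scalar constraint on the admissible boundary data. With the natural normalizations, I expect this constraint to collapse, to leading order, to
\[
\int_{\S^1} e^{-2\gamma(\theta)} \, d\theta = 2\pi,
\]
i.e.\ $\|e^{-\gamma}\|_{L^2(\S^1)} = 1$. Heuristically, the $e^{-\gamma}$ factor reflects the way the boundary circle of $\H^2$ is reparameterized when each point is pushed by a signed hyperbolic distance $2\gamma$: the induced angular measure is $e^{-2\gamma} d\theta$ at leading order, and the total perimeter at infinity is prescribed by the model $S_0$.

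With this identification in hand, the theorem follows from Theorem~\ref{thm:structg}: any small $\gamma$ with $\|e^{-\gamma}\|_{L^2} = 1$ lies, after solving for the transverse parameter $t$ by the implicit function theorem (using that the defining equation of $\Gg$ is nondegenerate in the direction $t$, which is part of the codimension one statement of Theorem~\ref{thm:structg}), on the image of $\Gg$ under the projection $(\gamma, t) \mapsto \gamma$. The corresponding point $(\gamma, t(\gamma)) \in \Gg$ is then the parameter of an actual CMC-$1/ 2$ complete entire graph with asymptotic horizontal signed distance $2\gamma$ from $S_0$.

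The main obstacle will be the derivation of the codimension one condition as this specific $L^2$-type normalization. It requires a careful expansion of the compactified mean curvature equation up to $\S^1$, and a matching of the normalization conventions of Theorem~\ref{thm:compcourb} with that of the signed horizontal distance used in the statement; any subleading correction has to be absorbed in the transverse parameter $t$ via the implicit function theorem, which is precisely why the smallness hypothesis on $\gamma$ is needed.
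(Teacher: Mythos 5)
Your identification of the constraint cutting out the moduli space is essentially the right one --- the paper's Proposition~\ref{prop:localg} shows, via a vertical flux computation (Stokes' theorem applied to the Killing field $e_3$ on the region between the graph and the slice $\H^2\times\lac 0\rac$), that admissible values at infinity satisfy $|e^{-\gamma}|_{L^2(\S^1)}=1$. But that computation, like your proposed ``asymptotic conservation law'', only yields a \emph{necessary} condition: it proves that the set $\Gamma_0$ of realizable boundary data is \emph{contained in} $M=\lac\gamma : |e^{-\gamma}|_{L^2(\S^1)}=1\rac$. The theorem asserts the converse, that every small $\gamma\in M$ is realized, and your mechanism for getting it fails. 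You propose to solve for the transverse $\R$-parameter $t$ by the implicit function theorem, ``using that the defining equation of $\Gg$ is nondegenerate in the direction $t$''. This is false, and in fact the opposite of what happens: in the chart $\Uu_a=\Gamma_a\times\R$ the $\R$-factor is the coefficient $\lambda$ of the Jacobi field $\phi^a=\lan N^a,e_3\ran$, it corresponds to vertical translations, and the CMC condition $\kappa_a=0$ is \emph{independent} of it (Proposition~\ref{prop:lambdag}; explicitly $D_2\kappa_a(\gamma^a,0)\cdot 1=\ov{L}_0(\phi^a)=0$). Note also that your proposal is internally inconsistent here: if the defining equation were nondegenerate in $t$, you could solve for $t(\gamma)$ for \emph{every} small $\gamma$, and the constraint $|e^{-\gamma}|_{L^2}=1$ that you yourself derive (and which does not involve $t$) would be vacuous.

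The missing ingredient is transversality in the $\gamma$-direction. The paper shows that $\kappa_a$ is a submersion because $D_1\kappa_a(\gamma^a,0)\cdot 1=\ov{L}\big(D_1\xi_a(\gamma^a,0)\cdot 1\big)\neq 0$, which is exactly Corollary~\ref{cor:nosolg}: there is no Jacobi field on $\ov{\D}$ with boundary value $1$, proved by testing the Green identity (Proposition~\ref{prop:greeng}) against $\phi^a$ and using $\partial_r\phi^a|_{r=1}=-e^{-\gamma^a}$. This makes $\Gamma_a$ a codimension-one submanifold of $\Cc^{2,\alpha}(\S^1)$, and the same Green identity computes its tangent space at $\gamma^a$ to be $\lan e^{-2\gamma^a}\ran^{\bot}$, which coincides with $T_{\gamma^a}M$. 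Only then does the inclusion $\Gamma_a\subset M$ of two codimension-one submanifolds sharing a tangent space force $\Gamma_0$ to be relatively open in $M$ near $0$, which is what yields existence for every small admissible $\gamma$. Without the non-degeneracy supplied by Corollary~\ref{cor:nosolg}, your argument cannot rule out that $\Gamma_0$ is a proper, higher-codimension subset of $M$, and the existence claim does not follow.
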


These graphs are interesting, since any connected complete embedded CMC-$1/ 2$ surface in $\H^2 \times \R$ which is contained in the half-space $\H^2 \times \R_+$ and has a proper height function is a vertical entire graph. Indeed, apply Alexandrov reflection principle to such an immersion with respect to the horizontal slices. Namely, reflect through the slice the part of the surface situated below it to obtain a surface which is a bigraph i.e. a graph over each side of the slice. There will be no first point of tangent contact between the initial surface and the part of the bigraph which is not a part of the surface, since there is no compact CMC-$1/ 2$ surface in $\H^2 \times \R$.

We also prove a half-space property for these entire graphs:

\begin{theorem*}[Theorem~\ref{thm:halfspace}]

Let $\Sigma$ be a CMC-$1/ 2$ surface which is properly immersed in $\H^2 \times \R$, lies on one side of a CMC-$1/ 2$ entire graph $S$ in the aforementioned family and is well oriented with respect to $S$. Then $\Sigma$ coincides with $S$ up to a vertical translation.

\end{theorem*}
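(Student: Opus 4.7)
The plan is a vertical sliding argument in the spirit of the classical Hoffman--Meeks half-space theorem, combining the strong tangency principle for CMC-$1/2$ surfaces with the compactified mean curvature operator of Theorem~\ref{thm:compcourb} to handle contact at infinity. After a preliminary reduction, one may assume that $\Sigma$ lies weakly above $S$ with compatible mean curvature vectors, which is the content of the well-orientation hypothesis. Since the construction of the family is equivariant under vertical translations of $\H^2 \times \R$, the image $S_t$ of $S$ under the vertical translation of height $t$ is again a CMC-$1/2$ entire graph in the same family, for every $t \in \R$.

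I would then set
\[
t_0 := \sup \lac t \in \R \mid S_t \text{ lies weakly below } \Sigma \rac,
\]
and check that $t_0$ is finite: properness of $\Sigma$, the one-sidedness hypothesis, and properness of the height function on $S$ force $S_t$ to cross $\Sigma$ as soon as $t$ is large enough. By construction, $S_{t_0}$ sits weakly below $\Sigma$, and one of two cases arises. If there is an interior point of tangential contact between $S_{t_0}$ and $\Sigma$, the oriented tangency principle for CMC-$1/2$ surfaces forces local coincidence, hence $\Sigma = S_{t_0}$ by real-analyticity and connectedness of $\Sigma$. Otherwise $S_{t_0}$ remains strictly below $\Sigma$ while the distance between the two surfaces tends to $0$ along a sequence of points diverging in $\H^2 \times \R$: this asymptotic contact case is the delicate one.

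To treat this case I would invoke Theorems~\ref{thm:compcourb} and~\ref{thm:structg}. An Alexandrov reflection argument as in the discussion preceding Theorem~\ref{thm:halfspace}, together with embeddedness near infinity, should allow one to write $\Sigma$ near infinity as a vertical graph over $S_{t_0}$. The nonnegative graph difference $u$ between the two graphs then satisfies a linear elliptic equation obtained by linearizing the CMC-$1/2$ operator along the straight homotopy joining the two solutions. In the compactified coordinates of Theorem~\ref{thm:compcourb} this linearized operator extends up to the ideal circle $\S^1$, and the asymptotic contact is precisely the statement that $u$ attains a vanishing boundary minimum. A Hopf boundary lemma then forces $u \equiv 0$ in an asymptotic neighborhood, and analyticity extends the equality to all of $\Sigma$.

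The main obstacle lies in this asymptotic step. One has to verify that the hypotheses of the Hopf boundary lemma (regularity of coefficients up to $\S^1$, sign of the zeroth-order term, interior-sphere-type transversality at the ideal boundary) are indeed met by the linearization of the CMC-$1/2$ operator read in the charts provided by Theorem~\ref{thm:compcourb}. One also needs the preliminary reduction expressing $\Sigma$ as an asymptotic graph over $S_{t_0}$ so that the linearization is legitimate; here the $\Cc^{2, \alpha}(\S^1)$-control on the asymptotic signed distance from Theorem~\ref{thm:treibergsloc} should identify the correct boundary datum and rule out any residual gap. Once this is in place, the sliding argument closes in the standard fashion and yields the desired conclusion $\Sigma = S_{t_0}$.
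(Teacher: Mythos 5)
Your setup (slide $S$ vertically until first contact, dispose of interior tangency by the maximum principle, isolate the case of contact ``at infinity'') matches the paper's, but your treatment of the asymptotic-contact case has a genuine gap, and it is precisely the step where all the work lies. You propose to write $\Sigma$ near infinity as a vertical graph over $S_{t_0}$ and to apply a Hopf boundary lemma to the graph difference in the compactified coordinates at the ideal circle. Neither half of this is available. First, $\Sigma$ is only assumed properly \emph{immersed} and on one side of $S$; it need not be embedded near infinity, it is not assumed to lie in a half-space $\H^2 \times \R_+$ with proper height function (which is what the Alexandrov argument sketched in the introduction requires), and nothing forces it to be a graph over $S_{t_0}$ in any neighborhood of infinity. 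Second, even granting graphality, applying Hopf at $\lac r= 1 \rac$ requires the difference function to extend with $\Cc^1$ regularity up to the ideal boundary of the compactified chart; that is tantamount to assuming $\Sigma$ belongs to the family $\Ee$ with controlled graph coordinates at infinity, which is not a hypothesis of the theorem. You flag this as ``the main obstacle'' but do not resolve it.

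The paper's mechanism for this case is different and is the actual content of the proof: it never analyzes $\Sigma$ at infinity at all. Instead it uses Lemma~\ref{lem:defstable} to deform the compact-boundary annular piece $E= T^{\delta}(S) \cap (\Omega_R \times \R)$ into a family $E(\epsilon)$ of CMC-$1/ 2$ surfaces with the \emph{same} inner boundary circle but pushed to asymptotic horizontal signed distance $-\epsilon$ from $E$, i.e.\ strictly separated from $S$ at infinity. For small $\epsilon_0$ the surface $E(\epsilon_0)$ still meets $\Sigma$; translating $E(\epsilon_0)$ downward, the first contact with $\Sigma$ can occur neither at infinity (by the $-\epsilon_0$ separation) nor on the fixed inner boundary (which sits outside the region $\Omega_{2R} \times \R$ where $\Sigma$ approaches the translates of $S$), so it is an interior tangency and the maximum principle gives a contradiction. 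This barrier construction --- which rests on the solvability of the Dirichlet problem for the compactified operator on an annular domain, where strict stability makes $\ov{L}_0$ an isomorphism --- is the idea missing from your proposal; without it, or a worked-out substitute for your boundary Hopf lemma, the argument does not close.
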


\nidt The ``well oriented'' assumption is in the sense of L.~Mazet \cite{Ma} and means that if $\Sigma$ is below $S$, its mean curvature vector points to $S$. We use this result to show an asymptotic rigidity in our family of CMC-$1/ 2$ entire graphs (Theorem~\ref{thm:rigidinf}). Namely, if two graphs in the family are at the same asymptotic horizontal signed distance from the hyperboloid $S_0$, they coincide up to a vertical translation.

\medskip

In $\H^2 \times \R$, R.~S{\'a}~Earp and E.~Toubiana \cite{SETo} construct a one-parameter family of CMC $H= 1/ 2$ annuli which are rotationally invariant around a vertical geodesic. Recently, L.~Mazet has shown \cite{Ma2} that for $H> 1/ 2$, CMC annuli which are cylindrically bounded around a vertical geodesic are rotational examples.

Though annuli are not cylindrically bounded for $H= 1/ 2$, we prove that in a bounded tubular neighborhood of a rotational example, there are annuli, eventually embedded, which are asymptotic to different rotational examples with different axis:

\begin{theorem*}[Theorem~\ref{thm:nonalign}]

There exist CMC-$1/ 2$ annuli in $\H^2 \times \R$ with vertical ends, that are asymptotic ---~regarding the horizontal hyperbolic distance~--- to rotational examples with different vertical axis.

\end{theorem*}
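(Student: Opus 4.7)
My plan is to realize the annuli of Theorem~\ref{thm:nonalign} as small perturbations of a non-degenerate rotational CMC-$1/2$ annulus $A_0$ from the S{\'a}~Earp--Toubiana family, by an implicit function theorem whose asymptotic boundary data are given separately on each end by Theorem~\ref{thm:treibergsloc}.

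First I would fix a non-degenerate $A_0$ and cut it by a horizontal slice $\lac x_3 = t_0 \rac$, with $t_0$ large, into a compact middle piece and two vertical ends, each of which is (after a vertical symmetry through the slice if needed) a vertical graph over a disk in $\H^2$ comparable to a graph of the manifold of Theorem~\ref{thm:structg}. The asymptotic parametrization of Theorem~\ref{thm:treibergsloc} then attaches to each end a pair $(\gamma^\pm, c^\pm) \in \Cc^{2, \alpha}(\S^1) \times \R$ measuring the asymptotic horizontal signed distance from a translate of the hyperboloid $S_0$. For $A_0$ itself these data reduce to two equal constants in $\gamma$, because both ends share the axis of rotation.

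I would then study the map $\Phi$ sending a small perturbation $A$ of $A_0$ (in an appropriate H{\"o}lder class preserving the annular topology and the verticality of ends) to the quadruple $(\gamma^+, c^+, \gamma^-, c^-)$ attached to its ends. Its linearization $d\Phi_{A_0}$ is governed by the Jacobi operator of $A_0$, whose kernel, by the non-degeneracy assumption, reduces to the finite-dimensional space of infinitesimal ambient isometries tangent to $A_0$. Combining this with the compactification of the mean curvature operator (Theorem~\ref{thm:compcourb}), $d\Phi_{A_0}$ is Fredholm and becomes surjective after quotienting by the isometry directions, so the implicit function theorem produces, for every small $(\gamma^+, c^+, \gamma^-, c^-)$, a CMC-$1/2$ annulus $A$ near $A_0$ whose ends carry exactly these asymptotic data.

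To conclude, I note that choosing $\gamma^+$ and $\gamma^-$ to be small \emph{distinct} constants shifts the asymptotic axes of the two ends in $\H^2$ independently: at first order, a constant $\gamma$ corresponds to a horizontal translation of $S_0$, i.e.\ of the axis of the asymptotic rotational model. Theorem~\ref{thm:rigidinf} then identifies each end of $A$ as asymptotic, in horizontal hyperbolic distance, to the unique rotational CMC-$1/2$ annulus with axis prescribed by $\gamma^\pm$ and vertical position by $c^\pm$; picking $\gamma^+ \neq \gamma^-$ yields the desired non-aligned annulus. The decisive obstacle is the non-degeneracy of $A_0$: one must show that in a weighted H{\"o}lder class adapted to the ends, the Jacobi operator of some S{\'a}~Earp--Toubiana annulus has kernel reduced to the infinitesimal ambient isometries, so that the linearization above is onto modulo these. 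Once this is in hand, the remainder is a standard implicit function theorem argument combined with the identification of constant-$\gamma$ deformations with horizontal translations of $S_0$, both built on the structure of Theorems~\ref{thm:compcourb} and~\ref{thm:structg}.
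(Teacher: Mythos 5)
Your overall strategy---deform a non-degenerate rotational annulus by an implicit function theorem that prescribes the asymptotic data on each end separately---is the same as the paper's, but the final step contains a genuine error that breaks the proof. You claim that the linearized map onto the quadruple $(\gamma^+, c^+, \gamma^-, c^-)$ becomes surjective after quotienting by the ambient isometry directions, so that \emph{every} small quadruple is realized. This is false: the achievable asymptotic values form a codimension-one submanifold of $(\Cc^{2, \alpha}(\S^1))^2$ cut out (to first order, and in fact exactly) by the balancing condition $|e^{-\gamma^+}|_{L^2(\S^1)}= |e^{-\gamma^-}|_{L^2(\S^1)}$, which expresses conservation of the vertical flux along the annulus (Proposition~\ref{prop:locala}); quotienting by the finite-dimensional isometry group cannot absorb an infinite-codimension-one constraint on the target. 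Your concluding choice of two \emph{distinct small constants} $\gamma^+ \neq \gamma^-$ violates precisely this condition (for constants the balancing forces $\gamma^+= \gamma^-$), so the implicit function theorem does not produce the annulus you want. Moreover the geometric identification is wrong: the value at infinity of $A_{\beta}$ is the constant $-\log \beta$, so adding a constant to $\gamma$ changes the flux parameter $\beta$ of the asymptotic rotational model, not the position of its axis. A horizontal translation by $w_{\epsilon}= \epsilon e^{i\theta_0}$ instead produces the non-constant boundary function $\gamma_{\epsilon}(\theta)= \log \big{(} |1- \epsilon e^{i(\theta- \theta_0)}|/ \sqrt{1- \epsilon^2} \big{)}= -\epsilon \cos(\theta- \theta_0)+ O(\epsilon^2)$, a first Fourier harmonic at leading order, which satisfies $|e^{-\gamma_{\epsilon}}|_{L^2(\S^1)}= 1= |e^{-0}|_{L^2(\S^1)}$. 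The correct move is therefore to prescribe the flux-compatible pair $(\gamma^+, \gamma^-)= (\gamma_{\epsilon}, 0)$: the top end is then asymptotic to a rotational end with axis $\lac w_{\epsilon} \rac \times \R$ and the bottom end to one with axis $\lac 0 \rac \times \R$.

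Two further points. First, you defer the non-degeneracy of the model annulus as ``the decisive obstacle''; this cannot be left as a hypothesis, and the paper proves it for every $A_{\beta}$, $\beta \neq 1$, by a Fourier decomposition and Sturm comparison argument showing that the only Jacobi field vanishing at infinity is the vertical component $\phi$ of the normal (Proposition~\ref{prop:nondegen}). Second, invoking Theorem~\ref{thm:rigidinf} (a statement about entire graphs in $\Gg$) to identify the asymptotic model of an annular end is not justified; what is actually used is the elementary fact that two ends with the same value at infinity are at asymptotic horizontal hyperbolic distance zero from each other.
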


It means that contrary to the case of embedded minimal surfaces in $\R^3$ with finite total curvature and horizontal ends \cite{PeRo}, the notion of \emph{axis} is not relevant in general for CMC-$1/ 2$ annuli with vertical ends in $\H^2 \times \R$.

\subsection*{Notations}

Let $\D= \lac z \in \C \st[big] |z|< 1 \rac$ be the open unit disk, $\ov{\D}= \lac z \in \C \st[big] |z| \leq 1 \rac$ its closure and $(r, \theta)$ the polar coordinates on $\ov{\D}$. We use two standard models of $\H^2 \times \R$, which are the Minkowski model:
\begin{multline} \label{eq:modmink}
\H^2 \times \R= \Big{(} \lac \lr (x_0, \dots, x_3) \in \R^4 \rb x_1^2+ x_2^2- x_0^2= -1 \rac, \\
ds_L^2= dx_1^2+ dx_2^2+ dx_3^2- dx_0^2 \Big{)},
\end{multline}
where $\H^2 \times \R$ is seen as a subspace of the $4$-dimensional Minkowski space $\L^4$, and the Poincar{\'e} disk model:
\begin{multline} \label{eq:modpoinc}
\H^2 \times \R= \bigg{(} \lac (w, x_3) \in \D \times \R \rac, \\
ds_P^2+ dx_3^2= \frac{4}{(1- |w|^2)^2} |dw|^2+ dx_3^2 \bigg{)}.
\end{multline}
The vector field associated to the third coordinate is denoted $e_3$. In the Poincar{\'e} disk model~\eqref{eq:modpoinc}, the hyperbolic radius $\rho_{\H}(w)$ of a point $w$ is:
\[
\rho_{\H}(w)= 2\argtanh |w|= \log \lp \frac{1+ |w|}{1- |w|} \rp,
\]
and we will need the following formula in the proof of Proposition~\ref{prop:graphdeform}:
\[
\cosh \frac{\rho_{\H}(w)}{2}= \frac{1}{\sqrt{1- |w|^2}}.
\]
We call vertical graphs in $\H^2 \times \R$, immersions which are complete graphs over an open subset of the slice $\H^2 \equiv \H^2 \times \lac 0 \rac$, and we call vertical annuli in $\H^2 \times \R$, immersions which are complete vertical bigraphs.

Given surfaces $S, S'$ in $\H^2 \times \R$ admitting parametrizations in the Poincar{\'e} disk model respectively:
\[
\big{(} f(t, \theta)e^{i\theta}, t \big{)} \quad \text{and} \quad \big{(} f'(t, \theta)e^{i\theta}, t \big{)},
\]
the hyperbolic horizontal signed distance $d_{\H}(S, S')(t, \theta)$ between $S$ and $S'$ at height $t$ and in the direction $\theta$ is the difference of their hyperbolic radii in the slice $\H^2 \times \lac t \rac$ and direction $\theta$:
\begin{align*}
d_{\H}(S, S')(t, \theta) & = \rho_{\H}(S')(t, \theta)- \rho_{\H}(S)(t, \theta) \\
                         & = 2\lp \argtanh f'(t, \theta)- \argtanh f(t, \theta) \rp.
\end{align*}
When it exists, the \emph{asymptotic hyperbolic horizontal signed distance} between $S$ and $S'$ in the direction $\theta$ is the limit $\dst \Lim{t}{+\ii} d_{\H}(S, S')(t, \theta)$.

For any $R \in [0, 1)$, let $\Omega_R \subset \D$ be the domain $\Omega_R= \lac R \leq r< 1 \rac$. We consider the set of \emph{admissible domains} $\Dd= \lac \Omega_R | 0 \leq R< 1 \rac$. The boundary at infinity $\partial_{\ii} \H^2$ of $\H^2$ is identified with $\S^1$.

Given $\Omega \in \Dd$, the spaces $\Cc^{k, \alpha}(\ov{\Omega})$ and $\Cc^{k, \alpha}_0(\ov{\Omega})$, with $k \geq 0$ and $0< \alpha< 1$, are respectively the usual Hölder space and the subspace of functions that are zero on the boundary of $\Omega$. Finally, we consider the spaces $L^2(\cdot)$ endowed with the natural scalar product denoted $\lan \cdot, \cdot \ran_{L^2(\cdot)}$ and Hilbert norm $|\cdot|_{L^2(\cdot)}$.

\section{The mean curvature operator} \label{sec:bouts}

Consider a surface $S$ parametrized by an immersion $X: \D \into \H^2 \times \R$ with complete induced metric $g$. By \emph{compactification} of $S$, we mean a conformal change $\ov{g}$ of metric such that $\ov{g}$ extends to a metric on $\ov{\D}$.

The process is sensible to the parametrization. For instance, consider the hyperboloid $S_0$. It is a vertical graph over $\H^2$ parametrized by:
\[
(r, \theta) \in \D \mapsto \lp re^{i\theta}, \frac{2}{\sqrt{1- r^2}} \rp \in \H^2 \times \R,
\]
in the Poincar{\'e} disk model~\eqref{eq:modpoinc}, with induced metric:
\[
g= \frac{4}{(1- r^2)^3}\matrix{2}{2- r^2 & 0 \\ 0 & 1- r^2}.
\]
But $g$ cannot be conformally extended to the boundary $\lac r= 1 \rac$ of $\D$, since the terms of $g$ have different rates of explosion when $r \into 1$. The resulting metric would degenerate for $r= 1$.

To ensure the extension of the induced metric, we use a conformal parametrization $S_0$, namely the immersion $X^0: \D \into \H^2 \times \R$ defined by:
\[
X^0(r, \theta)= \lp F(r, \theta), \frac{2}{\sqrt{1- |F(r, \theta)|^2}} \rp= \lp F(r, \theta), 2\frac{1+ r^2}{1- r^2} \rp,
\]
where $F: \D \into \H^2$ is the $\Cc^1$-diffeomorphism defined in the Poincar{\'e} disk model~\eqref{eq:modpoinc} by:
\[
F(r, \theta)= \frac{2r}{1+ r^2} e^{i\theta}
\]
and in the Minkowski model~\eqref{eq:modmink} by:
\begin{gather*}
F(r, \theta)= \big{(} \cosh \chi(r, \theta), \sinh \chi(r, \theta) \cos \theta, \sinh \chi(r, \theta) \sin \theta \big{)} \\
\text{with} \quad \chi(r, \theta)= 2\log \lp \frac{1+ r}{1- r} \rp.
\end{gather*}

\begin{definition}

A surface in $\H^2 \times \R$ is said to \emph{admit graph coordinates at infinity}, if there exist an admissible domain $\Omega \in \Dd$ and a function $h: \Omega \into \R$ such that a part of the surface can be parametrized as the immersion on $\Omega$:
\[
X: (r, \theta) \in \Omega \mapsto \big{(} F(r, \theta), h(r, \theta) \big{)} \in \H^2 \times \R.
\]
When defined, we call such a parametrization \emph{graph coordinates at infinity}.

\end{definition}

In the sequel, we use graph coordinates at infinity to compactify surfaces and quantify their asymptotic behavior. Surfaces are thus considered as \emph{compact surfaces with boundary} and we can apply the method first developed by B.~White in \cite{Wh}.

\subsection[The family E]{The family $\Ee$}

Let $\Ee$ be the set of immersed surfaces in $\H^2 \times \R$, which admit ---~up to a symmetry with respect to the slice $\H^2 \times \lac 0 \rac$~--- graph coordinates at infinity written as:
\begin{equation} \label{eq:graphcoord}
X^{\eta}: (r, \theta) \in \Omega \mapsto \lp F(r, \theta), 2e^{\eta(r, \theta)} \frac{1+ r^2}{1- r^2} \rp \in \H^2 \times \R,
\end{equation}
for some admissible domain $\Omega \in \Dd$ and $\eta \in \Cc^{2, \alpha}(\ov{\Omega})$. Elements of $\Ee$ have \emph{vertical ends} \cite{ElNeSE} i.e. topological annuli with no asymptotic point at finite height ---~i.e. topological annuli properly embedded in $(\H^2 \cup \partial_{\ii} \H^2) \times \R$.

The hyperboloid $S_0$ itself is in $\Ee$ with $\Omega= \D$ and $\eta \equiv 0$. And so are the rotational examples of E.~Toubiana and R.~S{\'a}~Earp studied in Section~\ref{sec:ann}, owing the asymptotic development~\eqref{eq:asdev}.

\medskip

We highlight two properties of the family $\Ee$. The first is that it contains normal deformations of the hyperboloid $S_0$. Namely:

\begin{proposition} \label{prop:graphdeform}

A normal graph $S= \exp_{S_0}(\zeta N)$ over $S_0$, where $N$ is the upward pointing normal to $S_0$ and $\zeta \in \Cc^{2, \alpha}(\ov{\D})$, is in $\Ee$. In other words, there exist $\Omega \in \Dd$ and $\eta \in \Cc^{2, \alpha}(\ov{\Omega})$ such that the end of $S$ admits graph coordinates at infinity as in~\eqref{eq:graphcoord}. \\
Furthermore, the asymptotic value of $\eta$ is linked with the asymptotic horizontal (hyperbolic) distance between $S$ and $S_0$:
\[
\eta|_{\partial \D}= \frac{1}{2} \zeta|_{\partial \D},
\]

\end{proposition}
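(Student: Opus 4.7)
The plan is to work in the conformal parametrization $X^0$ of $S_0$, decompose $\exp$ using the product structure of $\H^2 \times \R$, and absorb the normal deformation into graph coordinates at infinity of the form~\eqref{eq:graphcoord} via a rotationally symmetric change of radial variable. The asymptotic behaviour of the relevant quantities near $r= 1$ will then yield the boundary value of $\eta$.

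By rotational invariance of $S_0$, the upward unit normal $N$ at $X^0(r, \theta)$ lies in the plane spanned by the unit hyperbolic radial vector $e_{\rho_\H}$ and $e_3$. A direct computation from the parametrization $w \mapsto (w, 2\cosh(\rho_\H(w)/ 2))$ of $S_0$ gives
\[
N(r, \theta)= -\tanh(\chi/ 2)\, e_{\rho_\H}+ \frac{1}{\cosh(\chi/ 2)}\, e_3,
\]
with $\chi(r, \theta)= 4\argtanh(r)= \rho_\H(F(r, \theta))$; in particular $N$ becomes horizontal as $r \to 1$. Since geodesics of a Riemannian product split as products,
\[
\exp_{X^0(r, \theta)}(\zeta N)= \lp \exp^{\H^2}_{F(r, \theta)} \big(-\zeta \tanh(\chi/ 2)\, e_{\rho_\H}\big),\ 2\cosh(\chi/ 2)+ \zeta/ \cosh(\chi/ 2) \rp,
\]
and because the $\H^2$-displacement is radial, the horizontal coordinate equals $F(r', \theta)$ for a unique $r'= r'(r, \theta) \in [0, 1)$ determined by $4\argtanh(r')= \chi- \zeta \tanh(\chi/ 2)$.

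Next I would check that $(r, \theta) \mapsto (r', \theta)$ extends to a $\Cc^{2, \alpha}$ map up to $\{r= 1\}$ with Jacobian uniformly bounded and bounded away from $0$ there. This is the main technical obstacle: it amounts to verifying that the combinations of $\tanh(\chi/ 2)$ and $1/ \cosh(\chi/ 2)$ appearing in $r'$ and its derivatives up to order $2$ admit $\Cc^{2, \alpha}$ extensions to $\partial \D$, which follows from the smoothness of $\exp$ on $\H^2 \times \R$ together with the explicit behaviour of $\chi$ as $r \to 1$, but is an unavoidable computation. Once this is done, the inverse function theorem provides some $\Omega_R \in \Dd$ on which the map is a $\Cc^{2, \alpha}$-diffeomorphism onto another admissible domain. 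Inverting this change of variable rewrites the end of $S$ as $(F(r', \theta), h(r', \theta))$ for some $h \in \Cc^{2, \alpha}$, and defining $\eta$ by $h = 2 e^\eta (1+{r'}^2)/ (1-{r'}^2)$ places $S$ in $\Ee$.

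Finally, the boundary value follows by asymptotic analysis as $r \to 1$ (which forces $r' \to 1$). Since $\tanh(\chi/ 2) \to 1$, the defining relation for $r'$ yields $2\argtanh(r') \sim \chi/ 2- \zeta/ 2$, and the $\cosh$ addition formula gives $(1+{r'}^2)/ (1-{r'}^2) \sim e^{-\zeta/ 2} (1+r^2)/ (1-r^2)$, while $h \sim 2(1+r^2)/ (1-r^2)$. Substitution in $h = 2 e^\eta (1+{r'}^2)/ (1-{r'}^2)$ gives $e^\eta \to e^{\zeta/ 2}$ as $r \to 1$, hence $\eta|_{\partial \D}= \zeta|_{\partial \D}/ 2$.
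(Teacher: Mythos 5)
Your proposal is correct and follows essentially the same route as the paper: in the Minkowski model the authors likewise split $\exp_{S_0}(\zeta N)$ using the product structure (their normal is exactly your $-\tanh(\chi/2)\,e_{\rho_{\H}}+ \cosh(\chi/2)^{-1} e_3$), define the new radial coordinate by $\chi(\wt{r},\theta)= \chi- \zeta\tanh(\chi/2)$, and invert it near $r= 1$ via the computation $\partial_r\big(\chi- \tfrac{2r\zeta}{1+ r^2}\big)= \tfrac{4}{1- r^2}+ O(1)$. The only cosmetic difference is at the end, where you read off $e^{\eta} \to e^{\zeta/2}$ directly from the reparametrization while the paper passes through the asymptotic horizontal distance at a fixed height $t$; both give $\eta|_{\partial \D}= \tfrac{1}{2}\zeta|_{\partial \D}$.
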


\begin{proof}
We use the Minkowski model~\eqref{eq:modmink} of $\H^2 \times \R$, where the map $F$ reads:
\begin{gather*}
F(r, \theta)= \big{(} \cosh \chi(r, \theta), \sinh \chi(r, \theta) \cos \theta, \sinh \chi(r, \theta) \sin \theta \big{)} \\
\text{with} \quad \chi(r, \theta)= 2\log \lp \frac{1+ r}{1- r} \rp.
\end{gather*}
A computation shows the unit normal $N$ to $S_0$ is:
\begin{multline*}
N= -\frac{2r}{1+ r^2} \lp \sinh \chi \der{x_0}+ \cosh \chi \cos \theta \der{x_1}+ \cosh \chi \sin \theta \der{x_2} \rp \\
+\frac{1- r^2}{1+ r^2} \der{x_3},
\end{multline*}
in the canonical basis of $\L^4$. Hence, $S$ is parametrized by the immersion:
\begin{multline*}
\Bigg{(} \cosh \lp \chi- \frac{2r\zeta}{1+ r^2} \rp, \sinh \lp \chi- \frac{2r\zeta}{1+ r^2} \rp \cos \theta, \\
\sinh \lp \chi- \frac{2r\zeta}{1+ r^2} \rp \sin \theta, 2\frac{1+ r^2}{1- r^2}+ \frac{1- r^2}{1+ r^2} \zeta \Bigg{)}.
\end{multline*}
We want to find new coordinates $(\wt{r}, \wt{\theta})$ on an admissible domain verifying:
\[
\chi(\wt{r}, \wt{\theta})= \chi(r, \theta)- \frac{2r}{1+ r^2} \zeta(r, \theta), \quad \cos \wt{\theta}= \cos \theta \quad \text{and} \quad \sin \wt{\theta}= \sin \theta,
\]
to have graph coordinates at infinity on $S$ as in~\eqref{eq:graphcoord}. Taking $\wt{\theta}= \theta$, compute:
\begin{align*}
\der{r} \lp \chi(r, \theta)- \frac{2r}{1+ r^2} \zeta(r, \theta) \rp & = \frac{4}{1- r^2}- \frac{2}{1+ r^2} \lp \frac{1- r^2}{1+ r^2} \zeta+ r\zeta_r \rp \\
                                                                    & = \frac{4}{1- r^2}+ O(1).
\end{align*}
If $r$ is sufficiently close to $1$, the map $r \mapsto \chi- 2r\zeta/ (1+ r^2)$ is strictly increasing (uniformly in $\theta$), which ensures existence and uniqueness of $\wt{r}$.

\medskip

To compute the asymptotic horizontal distance, consider a horizontal slice $\H^2 \times \lac t \rac$ intersecting $S$ and $S_0$. The hyperbolic radii of $S$ and $S_0$ at height $t$ and in the direction $\theta$ respectively denoted $\rho_{\H}(S)(t, \theta)$ and $\rho_{\H}(S_0)(t, \theta)$ verify:
\begin{gather*}
t= \frac{2e^{\eta}}{\sqrt{1- |F|^2}}= 2e^{\eta} \cosh \frac{\rho_{\H}(S)(t, \theta)}{2} \\
\text{and} \quad t= \frac{2}{\sqrt{1- |F|^2}}= 2\cosh \frac{\rho_{\H}(S_0)(t, \theta)}{2},
\end{gather*}
and we deduce:
\begin{gather*}
\rho_{\H}(S)(t, \theta)= 2\argcosh \frac{te^{-\eta}}{2}= 2\log t- 2\eta+ O\lp \frac{1}{t^2} \rp \\
\text{and} \quad \rho_{\H}(S_0)(t, \theta)= 2\argcosh \frac{t}{2}= 2\log t+ O\lp \frac{1}{t^2} \rp.
\end{gather*}
Therefore, the hyperbolic horizontal signed distance $d_{\H}(S, S_0)(t, \theta)$ between $S$ and $S_0$ at height $t$ and in the direction $\theta$ is:
\[
d_{\H}(S, S_0)(t, \theta)= \rho_{\H}(S_0)(t, \theta)- \rho_{\H}(S)(t, \theta)= 2\eta+ O\lp \frac{1}{t^2} \rp,
\]
which establishes the equality $\zeta|_{\partial \D}= 2\eta|_{\partial \D}$ at infinity. Indeed, $\zeta|_{\partial \D}$ is the normal signed distance between $S$ and $S_0$ at infinity as $S$ is constructed as a normal graph over $S_0$ at signed distance $\zeta$. And $\zeta|_{\partial \D}$ is also the horizontal distance at infinity, since the normal $N$ is asymptotically horizontal, which means $\lan N, e_3 \ran \longrightarrow 0$.
\end{proof}

Proposition~\ref{prop:graphdeform} emphasizes the fact that the relevant information at infinity is the asymptotic horizontal distance from the hyperboloid. And as suggested by~\eqref{eq:asdev} in Section~\ref{sec:ann}, the asymptotic horizontal distance is also relevant for deformed annuli, since the rotational examples are at a finite constant asymptotic horizontal distance from each other.

Therefore a general principle in this paper is to fix a convenient surface, the \emph{model surface}, and to construct deformations of the model surface prescribing the asymptotic horizontal distance from the model surface. It is also the supporting idea of the compactification of the mean curvature operator (Theorem~\ref{thm:compcourb}).

\medskip

A second interesting property of $\Ee$ is the following:

\begin{proposition} \label{prop:invisom}

The image of any element of $\Ee$ under the action of any isometry of $\H^2 \times \R$ is still an element of $\Ee$.

\end{proposition}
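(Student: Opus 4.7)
The plan is to decompose an arbitrary isometry of $\H^2 \times \R$ as a product of three types of generators and then verify invariance of $\Ee$ under each separately. Since $\operatorname{Isom}(\H^2 \times \R)$ factors as $\operatorname{Isom}(\H^2) \times \operatorname{Isom}(\R)$, the generators I would use are the vertical translations $T_c \colon (w, x_3) \mapsto (w, x_3 + c)$, the horizontal reflection $s \colon (w, x_3) \mapsto (w, -x_3)$, and the isometries $(w, x_3) \mapsto (\phi(w), x_3)$ with $\phi \in \operatorname{Isom}(\H^2)$. The case of $s$ is entirely absorbed into the ``up to a symmetry with respect to $\H^2 \times \lac 0 \rac$'' clause in the definition of $\Ee$, so there is nothing to do. For a vertical translation, I would rewrite the third coordinate of $T_c \circ X^{\eta}$ as
\[
2 e^{\eta} \frac{1+r^2}{1-r^2} + c = 2 e^{\tilde{\eta}} \frac{1+r^2}{1-r^2}, \qquad \tilde{\eta} = \log \lp e^{\eta} + \frac{c(1-r^2)}{2(1+r^2)} \rp,
\]
and shrink $\Omega$ to an admissible subdomain $\Omega' \in \Dd$ close enough to $\partial \D$ so that the argument of the logarithm stays positive---possible because $e^{\eta}$ is bounded below by a positive constant on the compact set $\ov{\Omega}$. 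This gives $\tilde{\eta} \in \Cc^{2, \alpha}(\ov{\Omega'})$ directly.

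The substantive case is an isometry of the hyperbolic factor. In the Poincaré disk, $\phi$ is a Möbius (or anti-Möbius) transformation $\phi(w) = (aw+b)/(\bar{b}w+\bar{a})$ with $|a|^2 - |b|^2 = 1$; its pole has modulus $|a|/|b| > 1$, so $\phi$ is smooth up to $\partial \D$. I would combine the standard conformal factor identity $1 - |\phi(w)|^2 = (1 - |w|^2)/|\bar{b}w + \bar{a}|^2$ with the elementary relation $1 - |F(r, \theta)|^2 = (1-r^2)^2/(1+r^2)^2$ to convert the demand $F(\tilde{r}, \tilde{\theta}) = \phi(F(r, \theta))$ into the two scalar equations
\[
\frac{1+\tilde{r}^2}{1-\tilde{r}^2} = |\bar{b}w + \bar{a}| \cdot \frac{1+r^2}{1-r^2}, \qquad \tilde{\theta} = \arg \phi \big{(} F(r, \theta) \big{)},
\]
with $w = F(r, \theta)$. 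Since the right-hand side of the first equation is smooth and bounded away from zero on $\ov{\Omega}$, and $\tilde{r} \mapsto (1+\tilde{r}^2)/(1-\tilde{r}^2)$ is a diffeomorphism of $[0, 1)$ onto $[1, +\ii)$, the implicit function theorem produces a smooth reparametrization $(r, \theta) \mapsto (\tilde{r}, \tilde{\theta})$ of an admissible subdomain $\ov{\Omega'}$ onto its image. Equating the third coordinates of $X^{\eta}$ and $X^{\tilde{\eta}}$ then yields the explicit formula
\[
\tilde{\eta}(\tilde{r}, \tilde{\theta}) = \eta(r, \theta) - \log |\bar{b}w + \bar{a}|,
\]
which lies in $\Cc^{2, \alpha}(\ov{\Omega'})$ after composing with the smooth inverse of the reparametrization.

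The expected obstacle is regularity of this reparametrization at $\partial \D$, since $F$ itself is not a diffeomorphism there: its radial derivative vanishes at $r = 1$ and one has $1 - r \sim \sqrt{2(1 - |w|)}$. The saving grace is that the implicit equation above relates $\tilde{r}$ and $r$ through the factor $|\bar{b}w + \bar{a}|$, which is smooth and strictly positive on $\ov{\Omega}$, so one never has to invert $F$ explicitly---the square-root singularities on the two sides cancel. With this in hand, each of the three generators preserves $\Ee$, and hence so does any composition, which gives the statement.
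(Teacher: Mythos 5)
Your proof is correct and follows essentially the same route as the paper: reduce to vertical translations, the horizontal reflection (absorbed by the ``up to a symmetry'' clause in the definition of $\Ee$) and isometries of the $\H^2$ factor, then compute the new graph function explicitly in each case --- your formula $\tilde{\eta}= \eta- \log \lb \bar{b}w+ \bar{a} \rb$ is exactly the paper's $\eta \circ \psi'+ \log \big{(} \lb 1- \ov{w_0} F \rb/ \sqrt{1- |w_0|^2} \big{)}$ read at the image point rather than at the source point. The one place you go beyond the paper is in explicitly verifying that the reparametrization $(r, \theta) \mapsto (\tilde{r}, \tilde{\theta})$ is regular up to $r= 1$ despite the degeneracy of $F$ there; the paper simply writes the new height function as $h \circ F^{-1} \circ \psi^{-1} \circ F$ and leaves that boundary regularity implicit.
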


\begin{proof}
Consider a surface $S \in \Ee$ with graph coordinates at infinity $(F, h)$ defined on $\Omega \in \Dd$, and denote by $(F, h')$ the graph coordinates at infinity of its image $S'$ under an isometry $\psi$ of $\H^2 \times \R$. Using parametrization~\eqref{eq:graphcoord}, we know that in the Poincar{\'e} disk model~\eqref{eq:modpoinc}:
\[
h= \frac{2e^{\eta}}{\sqrt{1- |F|^2}} \quad \text{with} \quad \eta \in \Cc^{2, \alpha}(\ov{\Omega}).
\]
It is sufficient to examine the cases when $\psi$ is either an isometry of $\H^2$ fixing the coordinate $x_3$ or a vertical translation. If $\psi$ is a vertical translation of $t_0 \in \R$, we have:
\[
h'= \frac{2e^{\eta}}{\sqrt{1- |F|^2}}+ t_0= 2\exp \lp \eta+ \log \lp 1+ t_0 \frac{e^{-\eta}}{2} \frac{1- r^2}{1+ r^2} \rp \rp \frac{1}{\sqrt{1- |F|^2}},
\]
eventually after a restriction to a domain $\Omega' \in \Dd$ for which $h|_{\Omega'}> -t_0$.

If $\psi$ reduces to an isometry of $\H^2$ preserving the orientation of $\H^2$, there exist $w_0 \in \D$ and $\delta_0 \in \R$ such that:
\[
\psi(w)= \frac{w+ w_0}{1+ \ov{w_0} w} e^{i\delta_0}.
\]
If $\psi'= F^{-1} \circ \psi^{-1} \circ F$, then:
\begin{align*}
h' & = h \circ \psi'= \frac{2e^{\eta \circ \psi'}}{\sqrt{1- |\psi^{-1} \circ F|^2}}= \lp e^{\eta \circ \psi'} \frac{\lb 1- \ov{w_0} F \rb}{\sqrt{1- |w_0|^2}} \rp \frac{2}{\sqrt{1- |F|^2}} \\
   & = \exp \lp \eta \circ \psi'+ \log \lp \frac{\lb 1- \ov{w_0} F \rb}{\sqrt{1- |w_0|^2}} \rp \rp \frac{2}{\sqrt{1- |F|^2}},
\end{align*}
and $S' \in \Ee$. Changing $F$ in $\ov{F}$, gives the result when $\psi$ reduces to an isometry of $\H^2$ reversing the orientation.
\end{proof}

\begin{remark}

The value $\eta|_{\partial \D}$ is invariant under vertical translations.

\end{remark}

\subsection{Compactification of the mean curvature}

From now on, to ease the notations, we denote with indices $1, 2$ quantities related to coordinates $r, \theta$ respectively. Consider an admissible domain $\Omega \in \Dd$ and a function $a \in \Cc^{2, \alpha}(\ov{\Omega})$. The model surface is the immersion $X^a$, written as in~\eqref{eq:graphcoord}, and we are interested in deformations $X^{\eta}$ with $\eta= a+ \xi$.

\begin{theorem} \label{thm:compcourb}

For any deformation $X^{a+ \xi}$ of the model surface $X^a$, with $\xi \in \Cc^{2, \alpha}(\ov{\Omega})$, the respective mean curvatures $H(a+ \xi)$ and $H(a)$ verify the following:
\begin{equation} \label{eq:comph}
\sqrt{|g(a)|} \big{(} H(a+ \xi)- H(a) \big{)}= \sum_{i, j} A_{ij}(r, \theta, a, D\xi) \xi_{ij}+ B(r, \theta, a, \xi, D\xi),
\end{equation}
where $|g(a)|$ is the determinant of the metric induced by $X^a$, $A_{ij}$ and $B$ are $\Cc^{0, \alpha}$ functions on $\ov{\Omega}$ which are real-analytic in their variables, and $A= (A_{ij})$ is a coercive matrix on $\ov{\Omega}$.

\end{theorem}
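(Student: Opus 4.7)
The plan is to establish~\eqref{eq:comph} by a direct computation in the Minkowski model~\eqref{eq:modmink}, in three stages. First, I would write $H(\eta)$ as an explicit quasilinear second-order operator in $\eta$ after factoring out the appropriate powers of $(1- r^2)$. Second, I would obtain \eqref{eq:comph} from this formula by a Taylor-type expansion in $\xi$. Third, I would verify the regularity and the coercivity of the resulting coefficients.

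For the first stage, I compute the tangent frame $\partial_i X^{\eta}= (\partial_i F, \partial_i h^{\eta})$ with $h^{\eta}= 2e^{\eta}(1+ r^2)/ (1- r^2)$, then the induced metric $g(\eta)$, the Lorentz wedge $\wt{N}^{\eta}:= \partial_1 X^{\eta} \wedge \partial_2 X^{\eta} \in \L^4$, and the unit normal $N^{\eta}:= \wt{N}^{\eta}/ \sqrt{|g(\eta)|}$. Using $\chi_r= 4/ (1- r^2)$ and the fact that $\sinh \chi$ blows up as $(1- r^2)^{-2}$ when $r \to 1$, direct inspection shows that $(1- r^2)^2 g_{ij}(\eta)$ and $(1- r^2)^2 \wt{N}^{\eta}$ are polynomial in $(\eta, D\eta)$ with coefficients real-analytic on $\ov{\Omega}$, and similarly for the second fundamental form $h_{ij}(\eta)= \lan \nabla_{\partial_i X^{\eta}} \partial_j X^{\eta}, N^{\eta} \ran$ of $\H^2 \times \R$. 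Combining these ingredients, the apparent blowups cancel and the identity $2H(\eta)= g^{ij}(\eta) h_{ij}(\eta)$ rewrites as
\[
2\sqrt{|g(\eta)|}\, H(\eta)= \sum_{i, j} \wt{A}_{ij}(r, \theta, \eta, D\eta)\, \eta_{ij}+ \wt{B}(r, \theta, \eta, D\eta),
\]
where $\wt{A}_{ij}, \wt{B}$ are real-analytic functions on $\ov{\Omega} \times \R \times \R^2$. This is already a compactification of the mean curvature operator at the level of a single graph.

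For the second stage, I substitute $\eta= a+ \xi$ and $\eta= a$ into this formula. The quasilinear part directly yields a coefficient $\wt{A}_{ij}(r, \theta, a+ \xi, Da+ D\xi)$ in front of $\xi_{ij}$; the remaining terms, of the form $[\wt{A}_{ij}(r, \theta, a+ \xi, Da+ D\xi)- \wt{A}_{ij}(r, \theta, a, Da)]\, a_{ij}$ and $\wt{B}(r, \theta, a+ \xi, Da+ D\xi)- \wt{B}(r, \theta, a, Da)$, are real-analytic in $(\xi, D\xi)$ and vanish at $\xi= 0$, so they contribute only to the lower-order term in~\eqref{eq:comph}. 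To convert $\sqrt{|g(a+ \xi)|}\, H(a+ \xi)- \sqrt{|g(a)|}\, H(a)$ into the required $\sqrt{|g(a)|}\, (H(a+ \xi)- H(a))$, I subtract $(\sqrt{|g(a+ \xi)|}- \sqrt{|g(a)|}) H(a+ \xi)$, which has the same quasilinear structure (again affine in $(a+ \xi)_{ij}= a_{ij}+ \xi_{ij}$ with analytic coefficients vanishing at $\xi= 0$). Regrouping all terms gives~\eqref{eq:comph}. The $\Cc^{0, \alpha}(\ov{\Omega})$ regularity of $A_{ij}, B$ follows at once from $a \in \Cc^{2, \alpha}(\ov{\Omega})$ and the analyticity of $\wt{A}_{ij}, \wt{B}$. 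Coercivity of $A$ comes from evaluating at $\xi= 0$: $A_{ij}(r, \theta, a, 0)$ is proportional to the inverse first fundamental form of $X^a$ (i.e.\ to the principal symbol of the linearized mean curvature), which is uniformly positive definite on $\ov{\Omega}$ by compactness.

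The main obstacle will be the first stage: the specific choice of the diffeomorphism $F$ and of the height function $2e^{\eta}(1+ r^2)/ (1- r^2)$ in~\eqref{eq:graphcoord} is dictated precisely by the requirement that the blowups of $g_{ij}$, $\wt{N}^{\eta}$, and $h_{ij}$ at $\partial \D$ cancel when assembled into $\sqrt{|g|}\, H$. This is also why the factor $\sqrt{|g(a)|}$ must appear on the left of~\eqref{eq:comph}: without it the coefficients $A_{ij}, B$ would blow up at $\{r= 1\}$ and the operator would fail to be uniformly elliptic up to the boundary.
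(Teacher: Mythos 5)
Your overall strategy (compactify the mean curvature as a quasilinear operator, Taylor-expand in $\xi$, then check regularity and coercivity) is the paper's strategy, though the paper computes in the product metric $F^* ds_P^2+ dx_3^2$ via the divergence form of the graph equation rather than via the wedge product and second fundamental form in the Minkowski model. However, your first stage contains a genuine error that the later stages inherit. The claimed identity
\[
2\sqrt{|g(\eta)|}\, H(\eta)= \sum_{i, j} \wt{A}_{ij}(r, \theta, \eta, D\eta)\, \eta_{ij}+ \wt{B}(r, \theta, \eta, D\eta),
\]
with $\wt{A}_{ij}, \wt{B}$ real-analytic (hence bounded on $\ov{\Omega}$ times compact sets), is false: test it on $\eta \equiv 0$. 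Then $\eta_{ij}= 0$ and the right-hand side is bounded, while $\sqrt{|g(0)|}= 16r(1+ r^2)^2 (1- r^2)^{-4}$ and $H(0)= 1/ 2$, so the left-hand side blows up at $\partial \D$. The quantity that compactifies is not $\sqrt{|g(\eta)|}\, H(\eta)$ but $\sqrt{|g(\eta)|} \lp H(\eta)- \frac{1}{2} \rp$; the real content of the theorem is the expansion
\[
H(\eta)= \frac{1}{2}+ \frac{1}{w(\eta)} \sum_{i, j} g^{ij}(\eta)\, \eta_{ij}+ R(r, \eta, D\eta)\, (1- r^2)^4, \quad g^{ij}(\eta)= O\lp (1- r^2)^4 \rp,
\]
with $R$ bounded, so that the constant $\frac{1}{2}$ cancels in the difference $H(a+ \xi)- H(a)$ \emph{before} one multiplies by $\sqrt{|g(a)|}= O\lp (1- r^2)^{-4} \rp$.

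The error propagates into your second stage. The correction term $\lp \sqrt{|g(a+ \xi)|}- \sqrt{|g(a)|} \rp H(a+ \xi)$, which you assert has ``analytic coefficients vanishing at $\xi= 0$'' and hence lands harmlessly in $B$, is in fact unbounded: since $\sqrt{|g(\eta)|}= 16r(1+ r^2)^2 e^{\eta} w(\eta) (1- r^2)^{-4}$, this term behaves like $(1- r^2)^{-4} \lp e^{a+ \xi}- e^{a} \rp$ times $H(a+ \xi) \approx \frac{1}{2}$, which diverges for any $\xi$ not vanishing on $\partial \D$. That divergence must cancel against an equally divergent piece hidden in your $\wt{B}(a+ \xi)- \wt{B}(a)$, and exhibiting this cancellation is exactly the computation you have skipped. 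To repair the proof you must track the powers of $(1- r^2)$ explicitly through $g_{ij}(\eta)$, $|g(\eta)|$, $W(\eta)$ and the Christoffel symbols, and verify that the coefficients of $(1- r^2)^k$, $k= 0, \dots, 3$, in $H(\eta)- \frac{1}{2}$ all vanish --- this is what the paper's Appendix~\ref{sec:comp} does. Your coercivity argument is then essentially right, since $A_{ij}= \sqrt{|g(a)|}\, g^{ij}(a+ \xi)/ w(a+ \xi)$ extends to $\lac r= 1 \rac$ with limit the diagonal matrix with entries $e^{-a}$ and $e^{a}$; but that nondegeneracy of the boundary limit requires the explicit computation, not merely compactness of $\ov{\Omega}$.
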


\begin{proof}[ (See Appendix~\ref{sec:comp} for computation details)]
Denote $\sigma$ the pullback metric $F^* ds_P^2$, i.e. in matrix terms:
\[
\sigma= \frac{16}{(1- r^2)^4} \matrix{2}{(1- r^2)^2 & 0 \\ 0 & r^2(1+ r^2)}.
\]
Differential properties of a surface in $\H^2 \times \R$ with graph coordinates at infinity $(F, h)$ are the ones of the actual graph of $h$ in $\D \times \R$ endowed with the metric $\sigma+ dx_3^2$. Following J. Spruck \cite{Sp}, the mean curvature $H(a+ \xi)$ is:
\[
H(a+ \xi)= \frac{1}{2} \Div_{\sigma} \lp \frac{\del_{\sigma} h(a+ \xi)}{W(a+ \xi)} \rp \quad \text{with} \quad W(a+ \xi)= \sqrt{1+ |\del_{\sigma} h(a+ \xi)|_{\sigma}^2},
\]
with quantities computed with respect to $\sigma$. If $(\Gamma_{ij}^k)$ denote the Christoffel symbols associated to $\sigma$, we have:
\[
H(a+ \xi)= \frac{1}{2W(a+ \xi)} \sum_{i, j} g^{ij}(a+ \xi) \lp \partial_{ij} h(a+ \xi)- \sum_k \Gamma_{ij}^k \partial_k h(a+ \xi) \rp,
\]
where the non zero Christoffel symbols are:
\begin{gather*}
\Gamma_{11}^1= \frac{2r}{1- r^2}, \quad \Gamma_{12}^2= \Gamma_{21}^2= \frac{1+ 6r^2+ r^4}{r(1+ r^2)(1- r^2)} \\
\text{and} \quad \Gamma_{22}^1= -\frac{r(1+ r^2)(1+ 6r^2+ r^4)}{(1- r^2)^3}.
\end{gather*}
If $a_1$ (resp. $a_2$) denotes the derivative of $a$ with respect to $r$ (resp. $\theta$), the induced metric $g(a)$ reads:
\begin{gather*}
g_{11}(a)= \frac{16(1+ r^2)^2 e^{2a}}{(1- r^2)^4} \Bigg{[} 1+ \frac{2ra_1}{1+ r^2} (1- r^2)+ \lp \frac{a_1^2}{4}+ \frac{e^{-2a}- 1}{(1+ r^2)^2} \rp (1- r^2)^2 \Bigg{]}, \\
g_{12}(a)= \frac{8(1+ r^2)^2 a_2 e^{2a}}{(1- r^2)^3} \lc \frac{2r}{1+ r^2}+ \frac{a_1}{2} (1- r^2) \rc \\
\text{and} \quad g_{22}(a)= \frac{16r^2 (1+ r^2)^2}{(1- r^2)^4} \lc 1+ \frac{a_2^2 e^{2a}}{4r^2} (1- r^2)^2 \rc,
\end{gather*}
and the expression of $W(a)$ is the following:
\begin{multline} \label{eq:exprw}
W(a)= \frac{(1+ r^2) e^a}{1- r^2} \Bigg{[} 1+ \frac{2ra_1}{1+ r^2} (1- r^2)+ \lp \frac{a_1^2}{4}+ \frac{e^{-2a}- 1}{(1+ r^2)^2} \rp (1- r^2)^2 \\
+\frac{a_2^2}{4r^2 (1+ r^2)^2} (1- r^2)^4 \Bigg{]}^{1/ 2}.
\end{multline}
The computation detailed in Appendix~\ref{sec:comp} gives the expression~\eqref{eq:comph} with the desired regularity and:
\[
A_{11}= e^{-a}+ O(1- r^2), \quad A_{12}= A_{21}= O(1- r^2) \quad \text{and} \quad A_{22}= e^a+ O(1- r^2),
\]
which shows that $A$ is coercive on $\Omega \cup \partial \D$.
\end{proof}

The quantity $\sqrt{g(a)} \big{(} H(a+ \xi)- H(a) \big{)}$, with $\xi \in \Cc^{2, \alpha}(\ov{\Omega})$, can be called a \emph{compactification} of the mean curvature of $X^a$ since it can be extended to the exterior boundary $\lac r= 1 \rac$ of $\Omega$. It is strongly linked with the compactification of the induced metric $g(a)$ by the following equality:
\[
A^{-1}= \matrix{2}{e^a & 0 \\ 0 & e^{-a}}+ O(1- r^2)= \frac{1}{\sqrt{|g(a)|}} g(a)+ O(1- r^2).
\]

\section[Moduli space of CMC-1/2 entire graphs]{Moduli space of CMC-$1/ 2$ entire graphs} \label{sec:graph}

In this section, we are interested in the subset $\Gg \subset \Ee$ of CMC-$1/ 2$ entire graphs contained in the half-space $\H^2 \times \R_+^*$. Since elements of $\Gg$ are simply connected, they can be \emph{globally} parametrized in graph coordinates at infinity over the whole disk $\D$ using~\eqref{eq:graphcoord}:
\[
X^{\eta}= \lp F, 2e^{\eta} \frac{1+ r^2}{1- r^2} \rp \quad \text{with} \quad F(r, \theta)= \frac{2r}{1+ r^2} e^{i\theta} \quad \text{and} \quad \eta \in \Cc^{2, \alpha}(\ov{\D}),
\]
and the geometrically defined function $\eta|_{\partial \D}: \S^1 \into \R$ is the \emph{value at infinity} of the surface.

\medskip

Consider a CMC-$1/ 2$ entire graph $S \in \Gg$, with graph coordinates at infinity $X^a$, where $a \in \Cc^{2, \alpha}(\ov{\D})$, and denote $\gamma^a= a|_{\partial \D}$ the value at infinity. A simple computation shows that the vertical component $\phi^a= \lan N^a, e_3 \ran$ of the upward pointing unit normal $N^a$ to $X^a$ can be expressed as:
\begin{equation} \label{eq:phia}
\phi^a= \frac{e^{-a}}{2c^a} \frac{1- r^2}{1+ r^2} \quad \text{with} \quad c^a= \frac{e^{-a}}{2} \frac{1- r^2}{1+ r^2} W(a),
\end{equation}
where $W(a)$ is given by~\eqref{eq:exprw} and $\phi^a= 1/ W(a)$. Note that $c^a$ is a positive function on $\ov{\D}$ such that $c^a|_{\partial \D}= 1/ 2$.

\medskip

In the sequel, we make the following abuse of notation denoting $H$ the operator:
\[
H: \eta \in \Cc^{2, \alpha}(\ov{\D}) \mapsto H(\eta) \in \Cc^{0, \alpha}(\ov{\D}),
\]
where $H(\eta)$ is the mean curvature of $X^{\eta}$, and calling it the \emph{mean curvature operator}.

\begin{lemma} \label{lem:diffh}

The differential of the operator $H$ at the point $a$ is:
\[
\a \eta \in \Cc^{2, \alpha}(\ov{\D}), \ DH(a) \cdot \eta= \frac{1}{2} L\lp \frac{\eta}{c^a} \rp,
\]
where $L$ is the Jacobi operator of $X^a$.

\end{lemma}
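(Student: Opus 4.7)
The plan is to decompose the graph-type velocity $V_\eta := \frac{d}{dt}\big|_{t=0} X^{a+t\eta}$ into components normal and tangent to the surface $X^a$, and then apply the classical first variation of the mean curvature. Since $S \in \Gg$ is a CMC-$1/2$ surface, only the normal component of $V_\eta$ will contribute.

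First I compute $V_\eta$ directly from~\eqref{eq:graphcoord}: it is the purely vertical vector field
\[
V_\eta = 2\eta\, e^a \frac{1+r^2}{1-r^2}\, e_3
\]
along $X^a$. Its scalar product with the upward unit normal equals $2\eta\, e^a \frac{1+r^2}{1-r^2}\, \phi^a$, and substituting the expression for $\phi^a$ from~\eqref{eq:phia} produces the clean identity
\[
\lan V_\eta, N^a \ran = \frac{\eta}{c^a}.
\]
Indeed, the function $c^a$ was introduced in~\eqref{eq:phia} precisely so that this cancellation occurs.

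Writing $V_\eta = (\eta/c^a)\, N^a + V_\eta^T$ with $V_\eta^T$ tangent to the image of $X^a$, the classical first variation formula for the mean curvature under an arbitrary variation $V = uN^a + V^T$ reads
\[
DH(a)\cdot V = \tfrac{1}{2}\, L u + V^T\!\cdot H(a),
\]
where $L$ is the Jacobi operator of $X^a$ and the last summand is the directional derivative of the scalar function $H(a)$ along the tangent field $V^T$. By assumption $X^a$ parametrizes an element of $\Gg$, so $H(a) \equiv 1/2$ is constant on $\ov{\D}$ and the tangential correction vanishes identically. Specializing to $V = V_\eta$ with $u = \eta/c^a$ yields the desired identity
\[
DH(a)\cdot\eta = \frac{1}{2}\, L\!\lp \frac{\eta}{c^a} \rp.
\]

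The only step that requires genuine algebraic verification is the computation $\lan V_\eta, N^a \ran = \eta/c^a$; the remainder of the argument is an immediate consequence of the standard first variation formula together with the CMC hypothesis that kills the tangential contribution. In particular, no delicate analysis is needed---the lemma is essentially a bookkeeping identity enabled by the careful definitions of $c^a$ and $\phi^a$ in~\eqref{eq:phia}.
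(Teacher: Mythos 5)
Your proof is correct and follows essentially the same route as the paper: both invoke the standard first variation formula $DH\cdot V= \tfrac{1}{2}L\lan V, N^a\ran$ for the vertical variation field and then use the expression~\eqref{eq:phia} of $\phi^a$ to simplify $\lan V_\eta, N^a\ran$ to $\eta/ c^a$. The only difference is that you make explicit the tangential decomposition and why the tangential term vanishes (constancy of $H(a)$), which the paper absorbs into the phrase ``it is a standard fact.''
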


\begin{proof}
If $X^{\eta_t}$ is a differentiable family in the parameter $t$ such that $\eta_0= a$, it is a standard fact that:
\[
\lr \frac{d}{dt} \rb_{t= 0} H(\eta_t)= \frac{1}{2} L \bigg{\lan} \lr \frac{d}{dt} \rb_{t= 0} X^{\eta_t}, N^a \bigg{\ran}= \frac{1}{2} L \lp 2e^a \phi^a \frac{1+r^2}{1- r^2} \lr \frac{d\eta_t}{dt} \rb_{t= 0} \rp,
\]
and the expression~\eqref{eq:phia} of $\phi^a$ leads to the conclusion.
\end{proof}

Using Theorem~\ref{thm:compcourb}, we define the \emph{compactified mean curvature operator} to be:
\begin{equation} \label{eq:defcompcourb}
\ov{H}: \xi \in \Cc^{2, \alpha}(\ov{\D}) \mapsto \sqrt{|g(a)|} \lp H(a+ 2c^a \xi)- \frac{1}{2} \rp \in \Cc^{0, \alpha}(\ov{\D}).
\end{equation}
The \emph{compactified Jacobi operator} is $\ov{L}= D\ov{H}(0): \Cc^{2, \alpha}(\ov{\D}) \into \Cc^{0, \alpha}(\ov{\D})$ and using Lemma~\ref{lem:diffh} we know that:
\[
\ov{L}= \sqrt{|g(a)|} L.
\]

\begin{proposition}[Green identity] \label{prop:greeng}

For any $u, v \in \Cc^{2, \alpha}(\ov{\D})$, $\ov{L}$ satisfies the following identity:
\[
\int_{\ov{\D}} \lp u\ov{L} v- v\ov{L} u \rp d\ov{A}= \int_0^{2\pi} e^{-\gamma^a} \lr \lp u \der[v]{r}- v \der[u]{r} \rp \rb_{r= 1} d\theta,
\]
with $d\ov{A}$ the Lebesgue measure on $\ov{\D}$.

\end{proposition}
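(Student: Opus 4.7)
The plan is to exploit the identification $\ov{L} = \sqrt{|g(a)|}\, L$ established just above the statement, so as to reduce the Green identity for the compactified operator $\ov{L}$ with respect to the Lebesgue measure $d\ov{A}$ on $\ov{\D}$ to the classical Green identity for the (self-adjoint) Jacobi operator $L$ with respect to the Riemannian area element $dA_{g(a)}$ of the immersion $X^a$, and then to analyse the behaviour of the resulting boundary term as $r \to 1^-$.

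First, since $dA_{g(a)} = \sqrt{|g(a)|}\, dr\, d\theta$, for every $R \in (0,1)$ one has
$$\int_{\{r \le R\}} (u\ov{L}v - v\ov{L}u)\, dr\, d\theta = \int_{\{r \le R\}} (uLv - vLu)\, dA_{g(a)}.$$
Because $L$ differs from $\Delta_{g(a)}$ by a multiplication operator, the standard Green formula on the Riemannian disk $(\{r \le R\}, g(a))$ yields
$$\int_{\{r \le R\}} (uLv - vLu)\, dA_{g(a)} = \int_{\{r = R\}} (u\partial_\nu v - v\partial_\nu u)\, ds_{g(a)},$$
where $\nu$ is the $g(a)$-outward unit normal to $\{r = R\}$ and $ds_{g(a)}$ the induced arclength. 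Imposing $g(a)(\nu, \partial_\theta) = 0$ and $g(a)(\nu, \nu) = 1$, one finds
$$\nu = \sqrt{\tfrac{g_{22}(a)}{|g(a)|}}\, \partial_r - \tfrac{g_{12}(a)}{\sqrt{|g(a)|\, g_{22}(a)}}\, \partial_\theta, \qquad ds_{g(a)} = \sqrt{g_{22}(a)}\, d\theta,$$
so that the boundary integrand becomes
$$\frac{g_{22}(a)}{\sqrt{|g(a)|}}(uv_r - vu_r)\, d\theta - \frac{g_{12}(a)}{\sqrt{|g(a)|}}(uv_\theta - vu_\theta)\, d\theta.$$

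I would then let $R \to 1^-$. Using the explicit expressions of $g_{ij}(a)$ recalled in the proof of Theorem~\ref{thm:compcourb}, a direct expansion gives $g_{22}(a)/\sqrt{|g(a)|} = e^{-a} + O(1 - r^2)$ and $g_{12}(a)/\sqrt{|g(a)|} = O(1 - r^2)$ as $r \to 1^-$. For $u, v \in \Cc^{2, \alpha}(\ov{\D})$ the boundary integrand therefore converges uniformly in $\theta$ to $e^{-\gamma^a}(uv_r - vu_r)|_{r= 1}$, the off-diagonal term dropping out. On the left-hand side, $u\ov{L}v - v\ov{L}u$ is continuous on $\ov{\D}$, so dominated convergence produces the integral over $\ov{\D}$ in the limit, and the claimed identity follows.

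The main obstacle is the asymptotic analysis at $r = 1$: the intrinsic length of $\{r = R\}$ blows up like $(1 - R^2)^{-2}$ while the normal derivative decays at a matching rate, so one must check carefully that the off-diagonal contribution coming from $g_{12}(a)$ — which \emph{a priori} has the same leading order as the principal one — is in fact smaller by a factor $1 - r^2$ and disappears in the limit, leaving precisely the weight $e^{-\gamma^a}$.
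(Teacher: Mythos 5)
Your proposal is correct and follows essentially the same route as the paper: reduce to the classical Green identity for $L$ on $\{r\le R\}$ with respect to the induced metric, identify the co-normal boundary term $\sqrt{g_{22}(a)}\,\partial_\nu=\frac{g_{22}(a)}{\sqrt{|g(a)|}}\partial_r-\frac{g_{12}(a)}{\sqrt{|g(a)|}}\partial_\theta$, and let $R\to 1$ using $g_{22}(a)/\sqrt{|g(a)|}=e^{-a}+O(1-r^2)$ and $g_{12}(a)/\sqrt{|g(a)|}=O(1-r^2)$. The only difference is that you spell out the cancellation of the off-diagonal term more explicitly than the paper does, which is a useful clarification rather than a deviation.
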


\begin{proof}
Let $u, v \in \Cc^{2, \alpha}(\ov{\D})$. For any $R \in (0, 1)$, $L$ satisfies a Green identity on $\lac r \leq R \rac$:
\[
\int_{\lac r \leq R \rac} (uLv- vLu) dA= \int_{\lac r= R \rac} \lp u\der[v]{\nu}- v\der[u]{\nu} \rp ds,
\]
where $dA$ and $ds$ are the measures corresponding to the metric induced by $X^a$ on $\lac r \leq R \rac$ and $\lac r= R \rac$ respectively, and where $\partial \cdot/ \partial \nu$ denotes the co-normal derivative. Notice that:
\begin{gather*}
dA= \sqrt{|g(a)|} \, d\ov{A}, \quad ds= \sqrt{g_{22}(a)} \, d\theta \\
\text{and} \quad \nu= \frac{1}{\sqrt{g_{22}(a) |g(a)|}} \big{(} g_{22}(a) X^a_1- g_{12}(a) X^a_2 \big{)},
\end{gather*}
with $d\ov{A}$ the Lebesgue measure on $\R^2$. Taking the limit when $R \into 1$, we obtain:
\[
\Lim{R}{1} \sqrt{g_{22}(a)} \der{\nu}= \Lim{R}{1} \lp \frac{g_{22}(a)}{\sqrt{|g(a)|}} \der{r}- \frac{g_{12}(a)}{\sqrt{|g(a)|}} \der{\theta} \rp= e^{-\gamma^a} \lr \der{r} \rb_{r= 1},
\]
and the identity follows.
\end{proof}

\begin{corollary} \label{cor:nosolg}

There is no solution $u \in \Cc^{2, \alpha}(\ov{\D})$ to the equation:
\[
\lac \begin{array}{ll}
\ov{L} u= 0         & \text{on } \ov{\D} \\
u|_{\partial \D}= 1
\end{array} \rr.
\]

\end{corollary}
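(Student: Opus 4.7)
The plan is to argue by contradiction using the Green identity (Proposition~\ref{prop:greeng}) with a carefully chosen test function $v$ in the kernel of $\ov{L}$ that vanishes on $\partial \D$.

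Suppose such a $u \in \Cc^{2, \alpha}(\ov{\D})$ exists. The natural candidate for $v$ is the vertical component $\phi^a = \lan N^a, e_3 \ran$ of the unit normal. Since $e_3$ is a Killing field of $\H^2 \times \R$ generating vertical translations and $X^a$ is CMC, the function $\phi^a$ is a Jacobi field, i.e.\ $L\phi^a= 0$, and therefore $\ov{L} \phi^a= \sqrt{|g(a)|} L\phi^a= 0$. Moreover, the explicit expression~\eqref{eq:phia} shows that $\phi^a \in \Cc^{2, \alpha}(\ov{\D})$ with $\phi^a|_{\partial \D}= 0$, because the factor $(1- r^2)/(1+ r^2)$ vanishes at $r= 1$ while $e^{-a}/(2c^a)$ is smooth up to the boundary.

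Plugging $u$ and $v= \phi^a$ into the Green identity of Proposition~\ref{prop:greeng}, the left-hand side is zero since both $\ov{L} u$ and $\ov{L} \phi^a$ vanish; on the right-hand side the term $\phi^a \, \partial_r u|_{r= 1}$ disappears because $\phi^a|_{\partial \D}= 0$, and $u|_{\partial \D}= 1$ leaves only:
\[
0= \int_0^{2\pi} e^{-\gamma^a} \lr \der[\phi^a]{r} \rb_{r= 1} d\theta.
\]
It remains to compute $\partial_r \phi^a|_{r= 1}$. Since $\phi^a$ vanishes at $r= 1$, only the derivative of $(1- r^2)/(1+ r^2)$ contributes at $r= 1$, yielding $-1$; combined with the boundary value $c^a|_{\partial \D}= 1/ 2$ recalled after~\eqref{eq:phia}, this gives $\partial_r \phi^a|_{r= 1}= -e^{-\gamma^a}$.

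Substituting back, we obtain:
\[
0= -\int_0^{2\pi} e^{-2\gamma^a(\theta)} \, d\theta< 0,
\]
a contradiction. The argument is essentially forced once one identifies $\phi^a$ as the correct test function; the only mildly delicate point is checking that $\phi^a$ really does extend to $\Cc^{2, \alpha}(\ov{\D})$ and computing its boundary normal derivative, both of which follow directly from the closed form~\eqref{eq:phia}.
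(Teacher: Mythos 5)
Your proposal is correct and follows essentially the same route as the paper: apply the Green identity of Proposition~\ref{prop:greeng} to $u$ and $\phi^a$, use $\ov{L}\phi^a= 0$, $\phi^a|_{\partial \D}= 0$ and $\partial_r \phi^a|_{r= 1}= -e^{-\gamma^a}$, and derive the contradiction $0= \pm\int_0^{2\pi} e^{-2\gamma^a}\, d\theta$. The sign discrepancy with the paper is only due to the order in which the two functions are inserted into the identity, and your boundary computations match those in the text.
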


\begin{proof}
By contradiction, suppose such a $u$ exist and apply Proposition~\ref{prop:greeng} to $\phi^a$ and $u$:
\begin{align*}
0 & = \int_{\ov{\D}} \lp \phi^a \ov{L} u- u\ov{L} \phi^a \rp d\ov{A}= \int_0^{2\pi} e^{-\gamma^a} \lr \lp \phi^a \der[u]{r}- u \der[\phi^a]{r} \rp \rb_{r= 1} d\theta \\
  & = \int_0^{2\pi} e^{-2\gamma^a} d\theta,
\end{align*}
since:
\[
\phi^a|_{r= 1}= 0 \quad \text{and} \quad \lr \der[\phi^a]{r} \rb_{r= 1}= \lr \lp -\frac{2re^{-a}}{1+ r^2}+ O(1- r^2) \rp \rb_{r= 1}= -e^{-\gamma^a}.
\]
This is impossible.
\end{proof}

Let $\ov{L}_0$ be the restriction of $\ov{L}$ to $\Cc^{2, \alpha}_0(\ov{\D})$ and $K= \ker \ov{L}_0$. Using the standard inclusions $\Cc^{2, \alpha}_0(\ov{\D}) \subset \Cc^{0, \alpha}(\ov{\D}) \subset L^2(\D)$, we denote by $K^{\bot}$ the orthogonal to $K$ in $\Cc^{0, \alpha}(\ov{\D})$ for the natural scalar product of $L^2(\D)$ and $K_0^{\bot}= K^{\bot} \cap \Cc^{2, \alpha}_0(\ov{\D})$.

It is a standard fact that the restriction $\ov{L}_0$ is a Fredholm operator with index zero (see for instance \cite{GiTr}). Namely $K= \R \phi^a$ and $\ov{L}_0 \big{(} \Cc^{2, \alpha}_0(\ov{\D}) \big{)}= K^{\bot}$.

\subsection{General deformations} \label{subsec:gendef}

Let $\mu_a: \Cc^{2, \alpha}(\S^1) \into \Cc^{2, \alpha}(\ov{\D})$ be the operator such that $\mu_a(\gamma)$ is the harmonic function on $\ov{\D}$ (for the flat laplacian) with value $\gamma- \gamma^a$ on the boundary $\partial \D$. In the sequel, we make constant use of the decomposition of $\Cc^{2, \alpha}(\ov{\D})$ in $\Cc^{2, \alpha}(\S^1) \times \R \times K_0^{\bot}$, which we call for short the \emph{decomposition induced} by $X^a$ or $a$, meaning that any $\eta \in \Cc^{2, \alpha}(\ov{\D})$ is characterized by a triple $(\gamma, \lambda, \sigma) \in \Cc^{2, \alpha}(\S^1) \times \R \times K_0^{\bot}$ such that:
\[
\eta= a+ 2c^a \big{(} \mu_a(\gamma)+ \lambda \phi^a+ \sigma \big{)}.
\]

Denote $\Pi_K$ and $\Pi_{K^{\bot}}$ be the orthogonal projections on $K$ and $K^{\bot}$ respectively. Following B. White \cite{Wh}, we show:

\begin{lemma} \label{lem:isomg}

Consider the map $\Phi: \Cc^{2, \alpha}(\S^1) \times \R \times K_0^{\bot} \into K^{\bot}$ defined by:
\[
\Phi(\gamma, \lambda, \sigma)= \Pi_{K^{\bot}} \circ \ov{H} \big{(} \mu_a(\gamma)+ \lambda \phi^a+ \sigma \big{)}.
\]
Then $D_3 \Phi(\gamma^a, 0, 0): K_0^{\bot} \into K^{\bot}$ is an isomorphism.
\end{lemma}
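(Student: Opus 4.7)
The plan is to reduce $D_3 \Phi(\gamma^a, 0, 0)$ to the restriction of $\ov{L}_0$ to $K_0^{\bot}$, and then conclude via the Fredholm alternative stated just above the lemma, together with the observation that $K \subset \Cc^{2, \alpha}_0(\ov{\D})$.

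First I would compute the differential. Since $\mu_a(\gamma^a)= 0$, evaluating the derivative of $\Phi$ in the third variable at the basepoint amounts to differentiating $\sigma \mapsto \Pi_{K^{\bot}} \circ \ov{H}(\sigma)$ at $\sigma= 0$. As $H(a)= 1/ 2$, the chain rule applied to the definition~\eqref{eq:defcompcourb} gives
\begin{equation*}
D\ov{H}(0) \cdot \sigma= \sqrt{|g(a)|} \, DH(a) \cdot (2c^a \sigma),
\end{equation*}
and Lemma~\ref{lem:diffh} reduces this to $\sqrt{|g(a)|} \cdot \frac{1}{2} L\lp 2c^a \sigma/ c^a \rp= \sqrt{|g(a)|} L\sigma= \ov{L}\sigma$. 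For $\sigma \in K_0^{\bot}$, one has $\ov{L}\sigma= \ov{L}_0 \sigma \in K^{\bot}$ by the Fredholm property of $\ov{L}_0$, so $\Pi_{K^{\bot}}$ acts as the identity and $D_3 \Phi(\gamma^a, 0, 0)$ coincides with the restriction $\ov{L}_0|_{K_0^{\bot}}: K_0^{\bot} \into K^{\bot}$.

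Second I would check bijectivity of this restriction. The key observation is that by the explicit formula~\eqref{eq:phia}, $\phi^a|_{\partial \D}= 0$, so $K= \R \phi^a \subset \Cc^{2, \alpha}_0(\ov{\D})$. For \emph{injectivity}, any $\sigma \in K_0^{\bot}$ with $\ov{L}_0 \sigma= 0$ lies in $K$; being simultaneously in $K$ and in $K^{\bot}$, it must vanish. For \emph{surjectivity}, given $h \in K^{\bot}$, the Fredholm property yields $\sigma_0 \in \Cc^{2, \alpha}_0(\ov{\D})$ with $\ov{L}_0 \sigma_0= h$. The $L^2(\D)$-orthogonal decomposition $\sigma_0= \sigma_K+ \sigma_{\bot}$ with $\sigma_K \in K$ and $\sigma_{\bot} \in K^{\bot}$ stays in $\Cc^{2, \alpha}_0(\ov{\D})$ precisely because $K \subset \Cc^{2, \alpha}_0(\ov{\D})$, so $\sigma_{\bot} \in K_0^{\bot}$ and $\ov{L}_0 \sigma_{\bot}= \ov{L}_0 \sigma_0- \ov{L}_0 \sigma_K= h$.

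The main point to monitor is the compatibility of function spaces between the $L^2(\D)$-orthogonal decomposition and the Dirichlet-zero space $\Cc^{2, \alpha}_0(\ov{\D})$: the boundary vanishing of $\phi^a$ is exactly what makes the $L^2$-projection onto $K$ preserve $\Cc^{2, \alpha}_0(\ov{\D})$. Apart from this minor bookkeeping, the statement is a direct consequence of the Fredholm theorem for $\ov{L}_0$ together with the short differential computation above.
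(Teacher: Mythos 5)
Your proposal is correct and follows the same route as the paper, whose proof is essentially the one-line observation that $D_3 \Phi(\gamma^a, 0, 0)= \Pi_{K^{\bot}} \circ \ov{L}_0|_{K_0^{\bot}}$ together with the fact that $K^{\bot}$ is the range of $\ov{L}_0$; you have merely filled in the chain-rule computation and the injectivity/surjectivity bookkeeping (note that $K \subset \Cc^{2, \alpha}_0(\ov{\D})$ is automatic since $K= \ker \ov{L}_0$ and $\ov{L}_0$ is by definition the restriction of $\ov{L}$ to $\Cc^{2, \alpha}_0(\ov{\D})$, so the boundary vanishing of $\phi^a$ need not be invoked). All the details you supply are sound.
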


\begin{proof}
A direct computation gives $D_3 \Phi(\gamma^a, 0, 0)= \Pi_{K^{\bot}} \circ \ov{L}_0|_{K_0^{\bot}}$ and we know $K^{\bot}$ is the range of $\ov{L}_0$, which means $D_3 \Phi(\gamma^a, 0, 0): K_0^{\bot} \into K^{\bot}$ is an isomorphism.
\end{proof}

Therefore, we can apply the implicit function theorem to $\Phi$, which states that there exist an open neighborhood $U_a$ of $(\gamma^a, 0)$ in $\Cc^{2, \alpha}(\S^1) \times \R$ and a unique smooth map $\sigma: U_a \into K_0^{\bot}$ such that:
\[
\a (\gamma, \lambda) \in U_a, \ \Phi \big{(} \gamma, \lambda, \sigma(\gamma, \lambda) \big{)}= 0.
\]
Then we define the smooth maps $\xi_a: U_a \into \Cc^{2, \alpha}(\ov{\D})$, $\eta_a: U_a \into \Cc^{2, \alpha}(\ov{\D})$ and $\kappa_a: U_a \into K$ by:
\begin{gather*}
\xi_a(\gamma, \lambda)= \mu_a(\gamma)+ \lambda \phi^a+ \sigma(\gamma, \lambda), \quad \eta_a(\gamma, \lambda)= a+ 2c^a \xi_a(\gamma, \lambda) \\
\text{and} \quad \kappa_a(\gamma, \lambda)= \Pi_K \circ \ov{H} \big{(} \xi_a(\gamma, \lambda) \big{)}.
\end{gather*}
If a surface in $\Ee$, defined on $\D$, admits $X^{\eta_a(\gamma, \lambda)}$ as graph coordinates at infinity, we say that $\lac \gamma, \lambda \rac$ are the \emph{data} of the surface with respect to $S$ or to $a$.

\begin{lemma} \label{lem:etag}

The maps $\eta_a$ and $\xi_a$ have the following properties:

\begin{enumerate}

	\item $\xi_a(\gamma^a, 0)= 0$ and $\eta_a(\gamma^a, 0)= a$.

	\item $\a (\gamma, \lambda) \in U_a, \ \eta_a(\gamma, \lambda)|_{\partial \D}= \gamma$.

	\item $D_2  \xi_a(\gamma^a, 0): \lambda \in \R \mapsto \lambda \phi^a \in \Cc^{2, \alpha}(\ov{\D})$.

\end{enumerate}

\end{lemma}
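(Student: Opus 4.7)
The three items are essentially bookkeeping consequences of the implicit function theorem construction plus the boundary behavior of the four ingredients $c^a$, $\mu_a(\gamma)$, $\phi^a$ and $\sigma$. I would handle them in the order (2), (1), (3), since (2) uses only the defining formula and boundary values, and (1), (3) both use the implicit function theorem output.

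For item (2), I would simply evaluate the defining expression
\[
\eta_a(\gamma, \lambda) = a + 2c^a\bigl(\mu_a(\gamma) + \lambda\phi^a + \sigma(\gamma,\lambda)\bigr)
\]
on $\partial\D$. By construction $\mu_a(\gamma)|_{\partial\D} = \gamma - \gamma^a$, $\phi^a|_{\partial\D} = 0$ (from the formula~\eqref{eq:phia}), $\sigma(\gamma,\lambda)|_{\partial\D} = 0$ since $\sigma$ takes values in $K_0^\bot \subset \Cc^{2,\alpha}_0(\ov{\D})$, and $c^a|_{\partial\D} = 1/2$ as noted after~\eqref{eq:phia}. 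Combining gives $\eta_a(\gamma,\lambda)|_{\partial\D} = \gamma^a + (\gamma - \gamma^a) = \gamma$.

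For item (1), the key observation is that $X^a$ is itself a CMC-$1/2$ graph in $\Gg$, so $H(a) = 1/2$ and therefore $\ov{H}(0) = 0$, which gives $\Phi(\gamma^a, 0, 0) = \Pi_{K^\bot}\circ \ov{H}(0) = 0$. By the uniqueness clause in the implicit function theorem applied to $\Phi$ at $(\gamma^a, 0, 0)$, this forces $\sigma(\gamma^a, 0) = 0$. Since $\mu_a(\gamma^a) = 0$ (the harmonic extension of the zero boundary datum vanishes) and $\lambda\phi^a = 0$, we get $\xi_a(\gamma^a,0) = 0$ and hence $\eta_a(\gamma^a,0) = a$.

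For item (3), differentiate the identity $\Phi(\gamma, \lambda, \sigma(\gamma,\lambda)) = 0$ with respect to $\lambda$ at $(\gamma^a, 0)$:
\[
D_2\Phi(\gamma^a,0,0) + D_3\Phi(\gamma^a,0,0)\cdot D_2\sigma(\gamma^a,0) = 0.
\]
From the chain rule and the definition of $\ov L = D\ov H(0)$, we have $D_2\Phi(\gamma^a,0,0) = \Pi_{K^\bot}\circ \ov L\phi^a$. Since $\phi^a = \langle N^a, e_3\rangle$ is the Jacobi field induced by vertical translation (an isometry of $\H^2\times\R$) on the CMC-$1/2$ surface $X^a$, it lies in $\ker L$, and therefore in $\ker \ov L = K$; thus $\Pi_{K^\bot}\circ \ov L\phi^a = 0$. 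Combined with the fact that $D_3\Phi(\gamma^a,0,0): K_0^\bot \to K^\bot$ is an isomorphism (Lemma~\ref{lem:isomg}), this yields $D_2\sigma(\gamma^a,0) = 0$. Differentiating $\xi_a(\gamma,\lambda) = \mu_a(\gamma) + \lambda\phi^a + \sigma(\gamma,\lambda)$ in $\lambda$ then gives $D_2\xi_a(\gamma^a,0)\cdot\lambda = \lambda\phi^a$.

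The only step requiring real content is recognizing that $\phi^a$ belongs to $K$, which is the standard geometric fact underlying the whole implicit function scheme; the rest is direct substitution and differentiation of the implicit function identity.
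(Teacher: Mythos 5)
Your proposal is correct and follows essentially the same route as the paper: boundary evaluation for Point~2, uniqueness in the implicit function theorem for Point~1, and differentiating the implicit identity in $\lambda$ together with $\ov{L}\phi^a= 0$ to get $D_2 \sigma(\gamma^a, 0)= 0$ for Point~3 (the paper concludes via $K \cap K_0^{\bot}= \lac 0 \rac$ rather than by citing the isomorphism of Lemma~\ref{lem:isomg}, but the two are interchangeable). One trivial slip: $K$ is $\ker \ov{L}_0$, not $\ker \ov{L}$, though all you actually use is $\ov{L}\phi^a= 0$ and $\phi^a|_{\partial \D}= 0$, which is fine.
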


\begin{proof}
Point~1 comes from the definition of $\mu_a$ and from the uniqueness in the implicit function theorem. Point~2 is a direct computation:
\begin{align*}
\eta_a(\gamma, \lambda)|_{\partial \D} & = a|_{\partial \D}+ 2c^a|_{\partial \D} \big{(} \mu_a(\gamma)|_{\partial \D}+ \lambda \phi^a|_{\partial \D}+ \sigma(\gamma, \lambda)|_{\partial \D} \big{)} \\
                                       & = \gamma^a+ 2\frac{1}{2} \big{(} (\gamma- \gamma^a) \big{)}= \gamma.
\end{align*}
For Point~3, it is sufficient to show $D_2 \sigma(\gamma^a, 0)= 0$. To do so we compute:
\begin{align*}
0 & = \lr \frac{d}{dt} \rb_{t= 0} \Phi \big{(} \gamma^a, t, \sigma(\gamma^a, t) \big{)}= \Pi_{K^{\bot}} \circ \ov{L} \big{(} \phi^a+ D_2 \sigma(\gamma^a, 0) \cdot 1 \big{)} \\
  & = \Pi_{K^{\bot}} \circ \ov{L}_0 \big{(} \phi^a+ D_2 \sigma(\gamma^a, 0) \cdot 1 \big{)}= \Pi_{K^{\bot}} \circ \ov{L}_0 \big{(} D_2 \sigma(\gamma^a, 0) \cdot 1 \big{)} \\
  & = \ov{L}_0 \big{(} D_2 \sigma(\gamma^a, 0) \cdot 1 \big{)}.
\end{align*}
Hence, $D_2 \sigma(\gamma^a, 0) \cdot 1 \in K \cap K_0^{\bot}= \lac 0 \rac$, which means $D_2 \sigma(\gamma^a, 0)= 0$.
\end{proof}

\begin{remark}

Consider $S, S' \in \Gg$ admitting respectively $X^a, X^{a'}$ as graph coordinates at infinity and suppose there exist a surface in $\Ee$ with data $\lac \gamma, \lambda \rac$ and $\lac \gamma', \lambda' \rac$ with respect to $S$ and $S'$ respectively. Therefore, this surface admits graph coordinates at infinity $X^{\eta_a(\gamma, \lambda)}$ and $X^{\eta_{a'}(\gamma', \lambda')}$ ---~i.e. $\eta_a(\gamma, \lambda)= \eta_{a'}(\gamma', \lambda')$~--- and we get:
\begin{equation} \label{eq:lienbasg}
\gamma'= \gamma \quad \text{and} \quad \lambda'= \frac{1}{|\phi^{a'}|_{L^2(\D)}^2} \bigg{\lan} \frac{\eta_a(\gamma, \lambda)- a'}{2c^{a'}}- \mu_{a'}(\gamma), \phi^{a'} \bigg{\ran}_{L^2(\D)}.
\end{equation}
The identity on values at infinity comes from Lemma~\ref{lem:etag} Point~2, and the expression of $\lambda'$ is just the projection along $\phi^{a'}$.

Note that a converse to this decomposition is the subject of Theorem~\ref{thm:rigidinf}, namely if $X^{\eta_a(\gamma, \lambda)}$ admits data with respect to $S'$, these data are $\lac \gamma', \lambda' \rac$ as defined in \eqref{eq:lienbasg}.

\end{remark}

Lemma~\ref{lem:etag} Point~2 also shows that the value at infinity of a surface $X^{\eta_a(\gamma, \lambda)}$ does not depend on $\lambda$, which means that given a value at infinity $\gamma$ there exists a $1$-parameter family of surfaces all with value at infinity equals to $\gamma$ as we show next.

\begin{proposition} \label{prop:lambdag}

Let $(\gamma, \lambda) \in U_a$. The surface $X^{\eta_a(\gamma, \lambda')}$ exists for any $\lambda' \in \R$ and coincides with $X^{\eta_a(\gamma, \lambda)}$ up to a vertical translation.

\end{proposition}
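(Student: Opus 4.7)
My plan is to exploit the fact that vertical translation is an isometry of $\H^2 \times \R$ and therefore preserves both CMC-$1/2$ surfaces and the family $\Ee$ (the latter by Proposition~\ref{prop:invisom}). I will produce the $\lambda'$-family by translating $S = X^{\eta_a(\gamma, \lambda)}$ vertically and reprojecting into the decomposition induced by $a$. Concretely, for $t_0 > -\inf_{\D} h$, where $h = 2e^{\eta_a(\gamma, \lambda)}(1 + r^2)/(1 - r^2)$ is the height of $S$, the translate $S_{t_0} := S + t_0 e_3$ remains an entire graph in $\H^2 \times \R_+^*$, hence in $\Gg$, and by the vertical-translation part of Proposition~\ref{prop:invisom} it carries graph coordinates at infinity on all of $\ov{\D}$ of the form
\[
\tilde{\eta}(t_0) = \eta_a(\gamma, \lambda) + \log \lp 1 + \frac{t_0}{h} \rp \in \Cc^{2, \alpha}(\ov{\D}),
\]
with $\tilde{\eta}(t_0)|_{\partial \D} = \gamma$ by the Remark following Proposition~\ref{prop:invisom}.

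Next I decompose $\tilde{\eta}(t_0) = a + 2c^a \lp \mu_a(\gamma) + \tilde{\lambda}(t_0) \phi^a + \tilde{\sigma}(t_0) \rp$ with $\tilde{\sigma}(t_0) \in K_0^{\bot}$. Since $S_{t_0}$ is CMC-$1/2$, we have $\ov{H}(\tilde{\xi}(t_0)) = 0$, and projecting onto $K^{\bot}$ gives $\Phi(\gamma, \tilde{\lambda}(t_0), \tilde{\sigma}(t_0)) = 0$. The uniqueness clause of the implicit function theorem behind Lemma~\ref{lem:isomg} then forces $\tilde{\sigma}(t_0) = \sigma(\gamma, \tilde{\lambda}(t_0))$ whenever $(\gamma, \tilde{\lambda}(t_0)) \in U_a$, so $\tilde{\eta}(t_0) = \eta_a(\gamma, \tilde{\lambda}(t_0))$ and $S_{t_0} = X^{\eta_a(\gamma, \tilde{\lambda}(t_0))}$: every vertical translate of $S$ coincides with a surface of the $\lambda'$-family.

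To conclude for every $\lambda' \in \R$, I analyze the map $t_0 \mapsto \tilde{\lambda}(t_0)$. From $\partial_{t_0} \tilde{\eta} = 1/(h + t_0)$ and $\phi^a > 0$ on $\D$,
\[
\frac{d\tilde{\lambda}}{dt_0} = \frac{1}{|\phi^a|_{L^2(\D)}^2} \int_{\D} \frac{\phi^a}{2c^a(h + t_0)} \, dA > 0,
\]
so $\tilde{\lambda}$ is smooth and strictly increasing in $t_0$. As $t_0 \to +\infty$, $\log(1 + t_0/h) \sim \log t_0$ uniformly on compact subsets of $\D$, which forces $\tilde{\lambda}(t_0) \to +\infty$. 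The main obstacle is the opposite endpoint: one must show that as $t_0$ decreases toward the lower bound of the admissible interval (where $1 + t_0/h$ concentrates on its zero set), the projection $\langle \tilde{\eta}(t_0), \phi^a/(2c^a) \rangle_{L^2(\D)}$ diverges to $-\infty$ so that $\tilde{\lambda}$ sweeps out all of $\R$. Once this is established, inverting $\tilde{\lambda}$ defines $\eta_a(\gamma, \lambda') := \tilde{\eta}(t_0(\lambda'))$ for every $\lambda' \in \R$ and realizes $X^{\eta_a(\gamma, \lambda')}$ as a vertical translate of $X^{\eta_a(\gamma, \lambda)}$, closing the proof.
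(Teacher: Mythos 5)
Your strategy is exactly the paper's: translate $X^{\eta_a(\gamma,\lambda)}$ vertically, use Proposition~\ref{prop:invisom} and the remark on invariance of the boundary value to keep the value at infinity equal to $\gamma$, decompose the translated function in the basis induced by $a$, invoke uniqueness in the implicit function theorem to identify the translate with $X^{\eta_a(\gamma,\tilde{\lambda}(t_0))}$, and show that $t_0\mapsto\tilde{\lambda}(t_0)$ is an increasing bijection of $(-m,+\ii)$ onto $\R$, where $m=\min_{\D}h$. The monotonicity and the limit $+\ii$ are handled as in the paper (the paper localizes to $\lac 0\leq r\leq 1/2\rac$ for the upper limit, which is what your ``uniformly on compact subsets'' amounts to).

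However, the step you explicitly leave open --- that $\tilde{\lambda}(t_0)\into-\ii$ as $t_0$ decreases to $-m$ --- is a genuine gap, not a formality, and it is precisely the crux of the proposition. The paper closes it by bounding, for $t\leq 0$, the integrand $\log\lp 1+t/h(\wt{a})\rp$ from above by $\log\lp 1+t/m\rp$ and factoring the resulting constant out of the integral. Be aware that this pointwise bound is problematic: since $h(\wt{a})\geq m$ and $t\leq 0$, one has $t/h(\wt{a})\geq t/m$, hence $\log\lp 1+t/h(\wt{a})\rp\geq\log\lp 1+t/m\rp$, i.e.\ the inequality runs the other way. Moreover, by monotone convergence $\tilde{\lambda}(t_0)$ tends, as $t_0$ decreases to $-m$, to $\lambda+\frac{1}{2\pi|\phi^a|_{L^2(\D)}^2}\int_{\D}(\phi^a)^2h(a)\log\lp 1-m/h(\wt{a})\rp$, and since a logarithmic singularity concentrated on the (generically lower-dimensional) set $\lac h(\wt{a})=m\rac$ is integrable in dimension two, this limit is typically finite. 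So the divergence to $-\ii$ cannot be deduced from that estimate, and without it the translation argument only produces $X^{\eta_a(\gamma,\lambda')}$ for $\lambda'$ in a half-line $(\tilde{\lambda}(-m^+),+\ii)$ rather than for all $\lambda'\in\R$. In short, your proposal reproduces the paper's argument up to the one point it does not prove, and that point is exactly where the paper's own justification is most fragile.
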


\begin{proof}
To ease the writing, denote $\wt{a}= \eta_a(\gamma, \lambda)$, $h(\wt{a})$ the height function of $X^{\wt{a}}$ i.e.:
\[
h(\wt{a})= 2e^{\wt{a}} \frac{1+ r^2}{1- r^2},
\]
and $m> 0$ the minimum of $h(\wt{a})$ on $\D$. We know from Proposition~\ref{prop:invisom} that we can parametrize a vertical translate of $X^{\wt{a}}$, by some $t \in \R$, by graph coordinates $X^{a'(t)}$ defined on $\D$ if and only if $t> -m$ and in that case:
\[
a'(t)= \wt{a}+ \log \lp 1+ t\frac{e^{-\wt{a}}}{2} \frac{1- r^2}{1+ r^2} \rp= \wt{a}+ \log \lp 1+ \frac{t}{h(\wt{a})} \rp.
\]
We also know that $a'(t)|_{\partial \D}= \wt{a}|_{\partial \D}$, which implies $\mu_a(\gamma^{a'(t)})= \mu_a(\gamma)$. Writing:
\[
a'(t)= a+ 2c^a \big{(} \mu_a(\gamma)+ \lambda'(t) \phi^a+ \sigma'(t) \big{)} \quad \text{with} \quad \lambda'(t) \in \R \quad \text{and} \quad \sigma'(t) \in K_0^{\bot},
\]
we only have to show that $\lambda'(t)$ is a bijection in the variable $t$ from the interval $(-m, +\ii)$ of possible translations onto $\R$. We have:
\[
\frac{a'(t)- a}{2c^a}= \frac{a'(t)- \wt{a}}{2c^a}+ \frac{\wt{a}- a}{2c^a}= \frac{1}{2c^a} \log \lp 1+ \frac{t}{h(\wt{a})} \rp+ \xi_a(\gamma, \lambda),
\]
and using \eqref{eq:lienbasg}, the expression of $\lambda'(t)$ is:
\begin{align*}
\lambda'(t) & = \lambda+ \frac{1}{2\pi |\phi^a|_{L^2(\D)}^2} \int_{\D} \frac{\phi^a}{c^a} \log \lp 1+ \frac{t}{h(\wt{a})} \rp \\
            & = \lambda+ \frac{1}{2\pi |\phi^a|_{L^2(\D)}^2} \int_{\D} (\phi^a)^2 h(a) \log \lp 1+ \frac{t}{h(\wt{a})} \rp \quad \text{since} \quad \frac{1}{c^a}= \phi^a h(a).
\end{align*}
Compute:
\[
\frac{d\lambda'(t)}{dt}= \frac{1}{2\pi |\phi^a|_{L^2(\D)}^2} \int_{\D} \frac{(\phi^a)^2 h(a)}{t+ h(\wt{a})}> 0
\]
i.e. $\lambda'(t)$ is a strictly increasing bijection from $(-m, +\ii)$ into $\R$. Also:
\[
\lambda'(t) \stackrel{(t \leq 0)}{\leq} \lambda+ \lc \frac{1}{2\pi |\phi^a|_{L^2(\D)}^2} \int_{\D} (\phi^a)^2 h(a) \rc \log \lp 1+ \frac{t}{m} \rp \tend{t}{-m} -\ii.
\]
If $M> 0$ is the maximum of $h(\wt{a})$ on the disk $\lac 0 \leq r \leq 1/ 2 \rac$, we get:
\begin{align*}
\lambda'(t) & \stackrel{(t \geq 0)}{\geq} \lambda+ \frac{1}{2\pi |\phi^a|_{L^2(\D)}^2} \int_{\lac 0 \leq r \leq 1/ 2 \rac} (\phi^a)^2 h(a) \log \lp 1+ \frac{t}{h(a)} \rp \\
            & \stackrel{\hphantom{(t \geq 0)}}{\geq} \lambda+ \lc \frac{1}{2\pi |\phi^a|_{L^2(\D)}^2} \int_{\lac 0 \leq r \leq 1/ 2 \rac} (\phi^a)^2 h(a) \rc \log \lp 1+ \frac{t}{M} \rp \tend{t}{+\ii} +\ii,
\end{align*}
which ensures that $\lambda'(t)$ is bijective from $(-m, +\ii)$ \emph{onto} $\R$.
\end{proof}

\subsection{CMC-1/2 deformations}

The values of the mean curvature of deformations $X^{\eta_a(\gamma, \lambda)}$ of $S$ are determined by $\kappa_a$. Indeed, for $(\gamma, \lambda) \in U_a$ we have $\Phi \big{(} \gamma, \lambda, \sigma(\gamma, \lambda) \big{)}= 0$ and:
\begin{equation} \label{eq:hkappa}
\ov{H} \big{(} \xi_a(\gamma, \lambda) \big{)}= \kappa_a(\gamma, \lambda)+ \Phi \big{(} \gamma, \lambda, \sigma(\gamma, \lambda) \big{)}= \kappa_a(\gamma, \lambda).
\end{equation}
In particular:
\[
\a (\gamma, \lambda) \in U_a, \ H\big{(} \eta_a(\gamma, \lambda) \big{)}= \frac{1}{2} \eq \kappa_a(\gamma, \lambda)= 0.
\]
Consider $\Uu_a= \kappa_a^{-1}(\lac 0 \rac) \cap U_a$. Using Proposition~\ref{prop:lambdag}, we can take $\Uu_a= \Gamma_a \times \R$ with $\Gamma_a$ a subset of $\Cc^{2, \alpha}(\S^1)$. Furthermore, since the construction is local, we can suppose $\Gamma_a$ connected.

\begin{proposition} \label{prop:localg}

$\Gamma_a$ is a codimension $1$ smooth submanifold of $\Cc^{2, \alpha}(\S^1)$. The tangent space to $\Gamma_a$ at $\gamma^a$ is the orthogonal space $\lan e^{-2\gamma^a} \ran^{\bot}$ to $e^{-2\gamma^a}$ in $\Cc^{2, \alpha}(\S^1)$ for the scalar product of $L^2(\S^1)$ and $\Gamma_a$ is a subset of:
\[
\lac \gamma \in \Cc^{2, \alpha}(\S^1) \lb |e^{-\gamma}|_{L^2(\S^1)}= 1 \rr \rac.
\]

\end{proposition}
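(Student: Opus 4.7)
\medskip
\noindent \textit{Proof plan.} The plan is to exhibit $\Gamma_a$ as the zero set of a smooth scalar map and to apply the implicit function theorem. By Proposition~\ref{prop:lambdag}, $\Uu_a= \Gamma_a \times \R$, so near $\gamma^a$ the set $\Gamma_a$ coincides with the zero set of the smooth map $\wt{\kappa}: \gamma \in \Cc^{2, \alpha}(\S^1) \mapsto \kappa_a(\gamma, 0) \in K \cong \R \phi^a$. It therefore suffices to compute $D\wt{\kappa}(\gamma^a)$ and to check that it is surjective: both the codimension $1$ submanifold structure and the tangent space $\lan e^{-2\gamma^a} \ran^{\bot}$ will follow formally.

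The heart of the argument is to evaluate $D\wt{\kappa}(\gamma^a)$ via the Green identity. Fix $\dot\gamma \in \Cc^{2, \alpha}(\S^1)$ and set $u= D\xi_a(\gamma^a, 0) \cdot (\dot\gamma, 0) \in \Cc^{2, \alpha}(\ov{\D})$; differentiating Lemma~\ref{lem:etag}, Point~2, gives $u|_{\partial \D}= \dot\gamma$, while by construction $u$ differs from the harmonic extension of $\dot\gamma$ by an element of $K_0^{\bot}$. Since $K= \R \phi^a$, the orthogonal projection onto $K$ is $\Pi_K(v)= |\phi^a|_{L^2(\D)}^{-2} \lan v, \phi^a \ran_{L^2(\D)}\, \phi^a$, so
\[
D\wt{\kappa}(\gamma^a) \cdot \dot\gamma= \Pi_K \circ \ov{L}(u)= \frac{\lan \ov{L} u, \phi^a \ran_{L^2(\D)}}{|\phi^a|_{L^2(\D)}^2}\, \phi^a.
\]
Applying Proposition~\ref{prop:greeng} to $u$ and $\phi^a$, and using $\ov{L} \phi^a= 0$, $\phi^a|_{r= 1}= 0$ and $\partial_r \phi^a|_{r= 1}= -e^{-\gamma^a}$ (as in the proof of Corollary~\ref{cor:nosolg}), the interior contribution $\int u \ov{L} \phi^a$ vanishes and only the boundary term survives:
\[
\lan \ov{L} u, \phi^a \ran_{L^2(\D)}= \int_0^{2\pi} e^{-2\gamma^a}\, \dot\gamma\, d\theta= \lan e^{-2\gamma^a}, \dot\gamma \ran_{L^2(\S^1)}.
\]
Since $e^{-2\gamma^a}$ is positive, this linear form is nonzero, so $D\wt{\kappa}(\gamma^a)$ is surjective onto $K$; the implicit function theorem then yields the submanifold structure and the tangent space identification.

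For the global inclusion $\Gamma_a \subset \lac \gamma \lb |e^{-\gamma}|_{L^2(\S^1)}= 1 \rr \rac$, I would vary the basepoint. For any $\gamma' \in \Gamma_a$, the function $a'= \eta_a(\gamma', 0)$ is the graph coordinate function of a CMC-$1/ 2$ entire graph with $\gamma^{a'}= \gamma'$ (Lemma~\ref{lem:etag}, Point~2), so the same computation performed with $a'$ in place of $a$ identifies $T_{\gamma'} \Gamma_a= \lan e^{-2\gamma'} \ran^{\bot}$. This is exactly the kernel at $\gamma'$ of the differential of the smooth function $\psi: \gamma \mapsto |e^{-\gamma}|_{L^2(\S^1)}^2$, so $\psi$ is locally constant, hence constant, along the connected manifold $\Gamma_a$; the value of this constant is $|e^{-\gamma^a}|_{L^2(\S^1)}^2$, equal to $1$ under the normalization of $L^2(\S^1)$ fixed in the paper. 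The main technical obstacle is the Green identity step: the computation must be arranged so that the unknown interior correction $D_1 \sigma(\gamma^a, 0) \cdot \dot\gamma$ hidden inside $u$ contributes nothing to $\lan \ov{L} u, \phi^a \ran_{L^2(\D)}$, which is what the identities $\ov{L} \phi^a= 0$ and $\phi^a|_{r=1}= 0$ jointly ensure. Once this reduction to the boundary pairing $\lan e^{-2\gamma^a}, \dot\gamma \ran_{L^2(\S^1)}$ is secured, the remainder of the proof is formal.
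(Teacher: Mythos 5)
Your treatment of the first two claims is correct and in fact slightly streamlines the paper's argument: by computing the full differential $D_1\kappa_a(\gamma^a, 0)\cdot \dot\gamma= |\phi^a|_{L^2(\D)}^{-2} \lan \ov{L}u, \phi^a\ran_{L^2(\D)}\, \phi^a$ and reducing $\lan \ov{L}u, \phi^a\ran_{L^2(\D)}$ to the boundary pairing $\int_0^{2\pi} e^{-2\gamma^a}\dot\gamma\, d\theta$ via the Green identity, you obtain the submersion property (take $\dot\gamma= 1$; this is exactly the content of Corollary~\ref{cor:nosolg}, whose proof you have effectively inlined) and the tangent space $\lan e^{-2\gamma^a}\ran^{\bot}$ in one stroke, where the paper does these in two separate steps. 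Your observation that the unknown correction $D_1\sigma(\gamma^a, 0)\cdot\dot\gamma \in \Cc^{2, \alpha}_0(\ov{\D})$ drops out of the boundary term and contributes nothing to the interior integral (since $\ov{L}\phi^a= 0$) is precisely the right point to isolate.

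The third claim, however, is where your proposal has a genuine gap. Your monodromy argument (redo the tangent-space computation at each $\gamma'\in \Gamma_a$, conclude that $\psi(\gamma)= |e^{-\gamma}|_{L^2(\S^1)}^2$ has vanishing derivative along the connected manifold $\Gamma_a$, hence is constant) only yields $\Gamma_a \subset \lac \psi= \psi(\gamma^a)\rac$. The assertion that $\psi(\gamma^a)= 1$ is \emph{not} a normalization convention: for a general CMC-$1/ 2$ entire graph $S$ with value at infinity $\gamma^a$, the identity $|e^{-\gamma^a}|_{L^2(\S^1)}= 1$ is a nontrivial geometric fact (it expresses the nullity of the vertical flux of an entire graph), and it is exactly what half of the paper's proof is devoted to establishing: Stokes' theorem applied to $\Div e_3= 0$ on the region $V_R$ bounded by $S$, the slice $\H^2\times\lac 0\rac$ and the cylinder $C_R$, followed by an explicit computation of $\int_{S_R}\lan N^a, e_3\ran$ and of the area of $D_R$, and the limit $R\into 1$. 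Without this (or an equivalent flux identity), your argument pins down the level set only up to an undetermined constant; note that $\Gg$ is not known to be connected, so you also cannot reduce to the hyperboloid $S_0$ where $\psi(0)= 1$ is immediate. (A secondary point: to identify $T_{\gamma'}\Gamma_a$ with $\lan e^{-2\gamma'}\ran^{\bot}$ you implicitly use that the chart $\Gamma_a$ near $\gamma'$ agrees with $\Gamma_{a'}$ for $a'= \eta_a(\gamma', 0)$; this follows from the transition maps \eqref{eq:lienbasg} and is the strategy the paper itself uses in the annulus case, Proposition~\ref{prop:locala}, but it deserves a sentence. There the base value of the conserved quantity is trivially $1- 1= 0$ by symmetry, which is why the analogous step causes no difficulty for annuli; for graphs it is the crux.)
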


\begin{proof}
We first show that $\kappa_a$ is a submersion at $(\gamma^a, 0)$. Using \eqref{eq:hkappa}, compute:
\begin{align*}
D_2 \kappa_a(\gamma^a, 0) \cdot 1 & = \lr \frac{d}{dt} \rb_{t= 0} \kappa_a(\gamma^a, t)= \lr \frac{d}{dt} \rb_{t= 0} \ov{H} \big{(} \xi_a(\gamma^a, t) \big{)} \\
                                  & = \ov{L} \big{(} D_2 \xi_a(\gamma^a, 0) \cdot 1 \big{)}= \ov{L}_0(\phi^a)= 0,
\end{align*}
since $\phi^a \in K$. Remains to find $\gamma \in \Cc^{2, \alpha}(\S^1)$ such that $D_1 \kappa_a(\gamma^a, 0) \cdot \gamma$ is not identically zero. We can take $\gamma= 1$. Indeed, using \eqref{eq:hkappa}:
\begin{align*}
D_1 \kappa_a(\gamma^a, 0) \cdot 1 & = \lr \frac{d}{dt} \rb_{t= 0} \kappa_a (\gamma^a+ t, 0)= \lr \frac{d}{dt} \rb_{t= 0} \ov{H} \big{(} \xi_a(\gamma^a+ t, 0) \big{)} \\
                                  & = \ov{L} \big{(} D_1 \xi_a(\gamma^a, 0) \cdot 1 \big{)} \neq 0,
\end{align*}
using Corollary~\ref{cor:nosolg} with $\big{(} D_1 \xi_a(\gamma^a, 0) \cdot 1 \big{)}|_{\partial \D}= 1$ deduced from Lemma~\ref{lem:etag} Point~2. Since $D\kappa_a$ is continuous and non zero at $(\gamma^a, 0)$, there exists an open neighborhood of $(\gamma^a, 0)$ in $\Cc^{2, \alpha}(\S^1) \times \R$ on which $\kappa_a$ is a submersion. Therefore, up to a restriction on $\Gamma_a$, we can suppose $\kappa_a$ is a submersion on $\Gamma_a \times \lac 0 \rac$, which implies $\Gamma_a$ is a submanifold of $\Cc^{2, \alpha}(\S^1)$ of codimension $1$.

Consider a smooth path $\gamma_t$ in $\Gamma_a$ with $\gamma_0= \gamma^a$ and tangent vectors $\dot{\gamma}_t$. Note that similarly:
\[
0= D\kappa_a(\gamma^a, 0) \cdot (\dot{\gamma}_0, 0)= \lr \frac{d}{dt} \rb_{t= 0} \kappa_a(\gamma_t, 0)= \ov{L} \big{(} D_1 \xi_a(\gamma^a, 0) \cdot \dot{\gamma}_0 \big{)}.
\]
Denote $v= D_1 \xi_a(\gamma^a, 0) \cdot \dot{\gamma}_0 \in \ker \ov{L}$. Knowing that:
\[
\phi^a|_{r= 1}= 0, \quad \lr \der[\phi^a]{r} \rb_{r= 1}= -e^{-\gamma^a} \quad \text{and} \quad v|_{r= 1}= \dot{\gamma}_0,
\]
apply Proposition~\ref{prop:greeng} to $\phi^a$ and $v$:
\begin{align} \label{eq:tangentg}
0 & = \int_{\ov{\D}} \lp \phi^a \ov{L} v- v\ov{L} \phi^a \rp d\ov{A}= \int_0^{2\pi} e^{-\gamma^a} \lr \lp \phi^a \der[v]{r}- v\der[\phi^a]{r} \rp \rb_{r= 1} d\theta \nonumber \\
  & = \int_0^{2\pi} \dot{\gamma}_0 e^{-2\gamma^a} d\theta= 2\pi \lan \dot{\gamma}_0, e^{-2\gamma^a} \ran_{L^2(\S^1)}.
\end{align}
Thus $\lan e^{-2\gamma^a} \ran^{\bot}$ is the tangent space to $\Gamma_a$ at $\gamma^a$, since it is of codimension $1$.

The stated inclusion for $\Gamma_a$ expresses the nullity of the vertical flux of an entire graph. If $ \gamma \in \Gamma_a$ is the value at infinity of a surface $S' \in \Gg$, consider the subset $V_R$, for $R \in (0, 1)$, of $\H^2 \times \R$ inside the vertical cylinder $C_R$ of (euclidean) radius $|F(R, \cdot)|= 2R/ (1+ R^2)$ in the Poincar{\'e} disk model~\eqref{eq:modpoinc}, delimited below by the slice $\H^2 \equiv \H^2 \times \lac 0 \rac$ and above by the surface $S'$. Since $e_3$ is a Killing vector field, using Stokes theorem we have:
\[
0= \int_{V_R} \Div e_3= \int_{S'_R} \lan N^{a'}, e_3 \ran+ \int_{D_R} \lan -e_3, e_3 \ran,
\]
where $S'_R$ is the part of $S'$ inside $C_R$ and $D_R$ the disk in $\H^2$ of (euclidean) radius $|F(R, \cdot)|= 2R/ (1+ R^2)$.

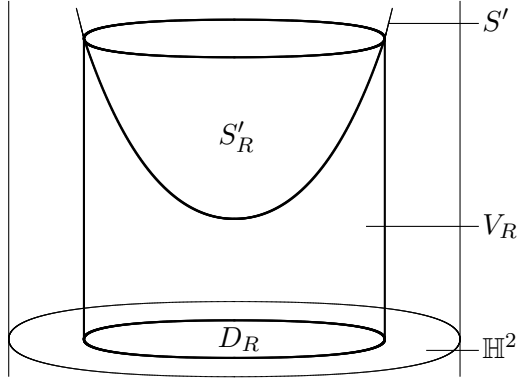
\begin{figure}[htbp]

\centering

\begin{picture}(6.8, 5)(-3, -0.5)

\put(-3, -0.5){\line(0, 1){5}} \put(3, -0.5){\line(0, 1){5}}
\qbezier(3, 0)(3, 0.5)(0, 0.5) \qbezier(0, 0.5)(-3, 0.5)(-3, 0) \qbezier(-3, 0)(-3, -0.5)(0, -0.5) \qbezier(0, -0.5)(3, -0.5)(3, 0)
\put(3.3, -0.25){$\H^2$} \put(3.25, -0.15){\line(-1, 0){0.7}} \put(3.3, 1.4){$V_R$} \put(3.25, 1.5){\line(-1, 0){1.5}} \put(3.3, 4.1){$S'$} \put(3.25, 4.2){\line(-1, 0){1.21}}
\ligne{2, 4}{2.1, 4.4} \ligne{-2, 4}{-2.1, 4.4}
\thicklines
\qbezier(0, 1.6)(1.2, 1.6)(2, 4) \qbezier(0, 1.6)(-1.2, 1.6)(-2, 4)
\qbezier(2, 0)(2, 0.25)(0, 0.25) \qbezier(0, 0.25)(-2, 0.25)(-2, 0) \qbezier(-2, 0)(-2, -0.25)(0, -0.25) \qbezier(0, -0.25)(2, -0.25)(2, 0)
\qbezier(2, 4)(2, 4.25)(0, 4.25) \qbezier(0, 4.25)(-2, 4.25)(-2, 4) \qbezier(-2, 4)(-2, 3.75)(0, 3.75) \qbezier(0, 3.75)(2, 3.75)(2, 4)
\put(-2, 0){\line(0, 1){4}} \put(2, 0){\line(0, 1){4}}
\put(-0.22, -0.1){$D_R$} \put(-0.21, 2.6){$S'_R$}

\end{picture}

\caption{Decomposition of the boundary of $V_R$}

\end{figure}

\nidt We use notations of Appendix~\ref{sec:comp}. If $X^{a'}$ are the graph coordinates at infinity of $S'$, we have $\Delta_{g(a')} X^{a'}= 2H(a') N^{a'}= N^{a'}$, since $H(a')= 1/ 2$, and the first integral writes:
\[
\int_{S'_R} \lan N^{a'}, e_3 \ran= \int_{\lac r \leq R \rac} \Delta_{g(a')} h(a')~dA= \int_{\lac r= R \rac} \der[h(a')]{\nu} ds,
\]
and we know that:
\[
\der{\nu}= \frac{1}{\sqrt{g_{22}(a')}} \lp \frac{g_{22}(a')}{\sqrt{|g(a')|}} \der{r}- \frac{g_{12}(a')}{\sqrt{|g(a')|}} \der{\theta} \rp \quad \text{and} \quad h(a')= 2e^{a'} \frac{1+ r^2}{1- r^2}.
\]
Using the expressions of $g_{12}(a')$, $g_{22}(a')$ and $|g(a')|$ computed in Appendix~\ref{sec:comp}, we get:
\[
\der[h(a')]{\nu}= \frac{1}{\sqrt{g_{22}(a')}} \frac{8r^2}{w(a')(1- r^2)^2} \lc 1+ \frac{(1+ r^2) a'_r}{4r} (1- r^2) \rc,
\]
and since $ds= \sqrt{g_{22}(a')} \, d\theta$, we obtain:
\begin{align*}
\int_{S'_R} \lan N^{a'}, e_3 \ran & = \frac{8R^2}{(1- R^2)^2} \int_0^{2\pi} \lr \frac{1}{w(a')} \lc 1+ \frac{(1+ r^2) a'_r}{4r} (1- r^2) \rc \rb_{r= R} d\theta \\
                                  & = \frac{16\pi R^2}{(1- R^2)^2}+ 2\pi \lp 1- |e^{-a'(R, \cdot)}|_{L^2(\S^1)} \rp+ O(1- R^2).
\end{align*}
The second is the area of $D_R$:
\[
\int_{D_R} 1= 2\pi \int_0^R \sqrt{|\sigma|} dr= 16\pi \int_0^{R^2} \frac{1+ r}{(1- r)^3} dr= \frac{16\pi R^2}{(1- R^2)^2}.
\]
Making $R \into 1$, we get $|e^{-\gamma}|_{L^2(\S^1)}= 1$ and the inclusion for $\Gamma_a$.
\end{proof}

A.~E.~Treibergs showed (see \cite{Tr}) that given a $\Cc^2$ curve ---~generalized to continuous curves by H.~I.~Choi and A.~E.~Treibergs in \cite{ChTr}~--- $\gamma: \S^1 \into \R$, there exists a CMC-$1/ 2$ complete entire vertical graph in the $3$-dimensional Minkowski space which is asymptotically at signed distance $\gamma$ from the light cone. Namely, it is the graph of a smooth function $f: \R^2 \into \R$ such that:
\[
f(x)= |x|+ \gamma \lp \frac{x}{|x|} \rp + \epsilon(x) \quad \text{with} \quad \Lim{|x|}{+\ii} \epsilon(x)= 0.
\]
Proposition~\ref{prop:localg} is indeed a $\Cc^{2, \alpha}$ local version of this result in $\H^2 \times \R$:

\begin{theorem} \label{thm:treibergsloc}

Let $\gamma \in \Cc^{2, \alpha}(\S^1)$, small in the $\Cc^{2, \alpha}$-norm, be such that $|e^{-\gamma}|_{L^2(\S^1)}= 1$. Then there exists a surface in $\Gg$ with $\gamma$ as value at infinity. In other words, there exists a CMC-$1/ 2$ complete entire vertical graph at asymptotic horizontal signed distance $\gamma$ from the hyperboloid $S_0$.

\end{theorem}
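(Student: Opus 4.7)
The plan is to specialize the construction of Section~\ref{sec:graph} to the hyperboloid $S_0$ as model surface, i.e.\ to take $a \equiv 0$ so that $\gamma^a = 0$. This yields the local set $\Gamma_0 \subset \Cc^{2, \alpha}(\S^1)$ of values at infinity realized by CMC-$1/ 2$ entire graphs close to $S_0$, and the theorem reduces to showing that every sufficiently small $\gamma \in \Cc^{2, \alpha}(\S^1)$ satisfying $|e^{-\gamma}|_{L^2(\S^1)} = 1$ actually belongs to $\Gamma_0$.

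By Proposition~\ref{prop:localg} applied with $a \equiv 0$, the set $\Gamma_0$ is a smooth codimension-$1$ submanifold of $\Cc^{2, \alpha}(\S^1)$ in a neighborhood of $0$, with tangent space $\lan 1 \ran^{\bot}$ at $0$. Introduce the flux hypersurface $N = \lac \gamma \in \Cc^{2, \alpha}(\S^1) \st[big] |e^{-\gamma}|_{L^2(\S^1)} = 1 \rac$. The functional $\gamma \mapsto |e^{-\gamma}|_{L^2(\S^1)}^2$ is smooth on $\Cc^{2, \alpha}(\S^1)$ and its differential at $0$ sends $v$ to $-2\lan 1, v \ran_{L^2(\S^1)}$, a nonzero continuous linear form. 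Hence $N$ is also a smooth codimension-$1$ submanifold near $0$, with tangent space $\lan 1 \ran^{\bot}$ at that point.

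Proposition~\ref{prop:localg} further provides the inclusion $\Gamma_0 \subset N$, so one has near $0$ two smooth Banach submanifolds of the same codimension sharing the same tangent space. The inclusion $\Gamma_0 \hookrightarrow N$ is then a smooth map whose differential at $0$ is the identity on $\lan 1 \ran^{\bot}$, so by the inverse function theorem $\Gamma_0$ is open in $N$ near $0$. Any small enough $\gamma \in N$ thus lies in $\Gamma_0$, and $X^{\eta_0(\gamma, 0)}$ parametrizes a CMC-$1/ 2$ entire graph with $\gamma$ as value at infinity; the asymptotic horizontal distance statement then follows from Proposition~\ref{prop:graphdeform}.

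The argument is essentially a dimension count, and its only genuine ingredient is the matching of tangent spaces at $0$. That matching is not a coincidence: Proposition~\ref{prop:localg} derives the tangent space of $\Gamma_a$ via the Green identity applied to $\phi^a$ and an element of $\ker \ov{L}$, and the very same boundary data ($\phi^a|_{\partial \D} = 0$ and $\partial_r \phi^a|_{\partial \D} = -e^{-\gamma^a}$) drive the vertical flux computation showing $\Gamma_a \subset N$. The only subtlety is that everything is local: smallness of $\gamma$ is essential, since $\Gamma_0$ itself is produced by a local application of the implicit function theorem.
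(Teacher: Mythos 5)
Your proposal is correct and follows the paper's intended route: the paper's own proof is the one-line assertion that small $\gamma$ with $|e^{-\gamma}|_{L^2(\S^1)}=1$ lies in $\Gamma_0$, and your tangent-space matching plus inverse-function-theorem argument (two codimension-one submanifolds, one contained in the other, with equal tangent space $\lan 1\ran^{\bot}$ at $0$) is exactly the justification that assertion implicitly relies on. Nothing is missing.
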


\begin{proof}
If $\gamma$ is sufficiently small in the $\Cc^{2, \alpha}$ norm, then $\gamma \in \Gamma_0$ and $X^{\eta_0(\gamma, 0)}$ is a CMC-$1/ 2$ entire graph admitting $\gamma$ as value at infinity.
\end{proof}

Another consequence of Proposition~\ref{prop:localg} is the global structure of $\Gg$:

\begin{theorem} \label{thm:structg}

The family $\Gg$ can be endowed with a structure of infinite dimensional smooth manifold.

\end{theorem}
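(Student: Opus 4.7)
The plan is to build an atlas on $\Gg$ by using, around each surface $S \in \Gg$ with graph coordinates at infinity $X^a$, the local parametrization constructed in Section~\ref{subsec:gendef}. Concretely, I define
\[
\Psi_a: \Gamma_a \times \R \longrightarrow \Gg, \qquad (\gamma, \lambda) \longmapsto \text{the surface with graph coordinates } X^{\eta_a(\gamma, \lambda)}.
\]
By definition of $\Gamma_a = \kappa_a^{-1}(\{0\}) \cap U_a$ projected onto its first factor and by~\eqref{eq:hkappa}, for $(\gamma, \lambda) \in \Gamma_a \times \R$ we have $H(\eta_a(\gamma, \lambda)) = 1/2$, so $\Psi_a(\gamma, \lambda) \in \Gg$. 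Proposition~\ref{prop:localg} tells us that $\Gamma_a$ is a smooth codimension-$1$ submanifold of $\Cc^{2,\alpha}(\S^1)$, modeled on the closed subspace $\langle e^{-2\gamma^a}\rangle^{\bot}$, so $\Gamma_a \times \R$ is a smooth Banach manifold that will serve as the model space for the chart.

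First I would verify that $\Psi_a$ is injective: if $\Psi_a(\gamma, \lambda) = \Psi_a(\gamma', \lambda')$, the two surfaces have the same graph coordinates at infinity, hence $\eta_a(\gamma, \lambda) = \eta_a(\gamma', \lambda')$; Lemma~\ref{lem:etag}.2 gives $\gamma = \gamma'$, and Proposition~\ref{prop:lambdag} (together with the strict monotonicity of $\lambda \mapsto \lambda'(t)$ established in its proof) yields $\lambda = \lambda'$. I then declare a subset $\mathcal{O} \subset \Gg$ to be open iff $\Psi_a^{-1}(\mathcal{O})$ is open in $\Gamma_a \times \R$ for every chart, and transport the smooth structure via the $\Psi_a$'s. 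The covering of $\Gg$ by the images of the $\Psi_a$'s is clear: any $S \in \Gg$ with graph coordinates $X^a$ lies in the image of its own chart $\Psi_a$, since $\Psi_a(\gamma^a, 0) = S$ by Lemma~\ref{lem:etag}.1.

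The heart of the proof is the smooth compatibility of charts. If $\Psi_a(\gamma, \lambda) = \Psi_{a'}(\gamma', \lambda') \in \Psi_a(\Gamma_a \times \R) \cap \Psi_{a'}(\Gamma_{a'} \times \R)$, then $\eta_a(\gamma, \lambda) = \eta_{a'}(\gamma', \lambda')$ and the transition formula \eqref{eq:lienbasg} from the Remark following Lemma~\ref{lem:etag} gives
\[
\gamma' = \gamma, \qquad \lambda' = \frac{1}{|\phi^{a'}|_{L^2(\D)}^2}\bigg\langle \frac{\eta_a(\gamma, \lambda) - a'}{2c^{a'}} - \mu_{a'}(\gamma), \phi^{a'}\bigg\rangle_{L^2(\D)}.
\]
The right-hand side is smooth in $(\gamma, \lambda)$ because $\eta_a$, $\mu_{a'}$, $c^{a'}$ and the $L^2$-inner product are smooth, so $\Psi_{a'}^{-1} \circ \Psi_a$ is smooth on the overlap. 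Finally, the Hausdorff property follows from the injectivity of each $\Psi_a$ together with the continuous dependence of the graph coordinates at infinity on the surface, and second countability can be arranged by extracting a countable subcover from the $\Cc^{2,\alpha}(\S^1)$-topology on the value-at-infinity map $S \mapsto \gamma^a$.

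The main obstacle is that to conclude the charts cover $\Gg$ with smooth transitions, I really need to know that whenever two charts overlap, the pullback \emph{data} $(\gamma', \lambda')$ defined by \eqref{eq:lienbasg} genuinely lie in the distinguished slice $\Gamma_{a'} \times \R$ and produce the same surface; this is precisely the asymptotic rigidity content of Theorem~\ref{thm:rigidinf}, and would have to be invoked (or established independently in the local setting) to close the argument. Once this rigidity is in hand, the computations above show that $\Gg$ is an infinite-dimensional smooth manifold modeled locally on the Banach manifold $\Gamma_a \times \R$.
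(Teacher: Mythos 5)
Your construction is essentially the paper's proof. The paper takes as chart domain $\Vv_a \subset \Gg$ the set of surfaces admitting data in $\Uu_a= \Gamma_a \times \R$, takes $\tau_a= \Psi_a^{-1}$ as the chart map (bijectivity coming from uniqueness in the implicit function theorem, which is the content of your injectivity argument via Lemma~\ref{lem:etag} and Proposition~\ref{prop:lambdag}), and observes that the transition maps are precisely the identities \eqref{eq:lienbasg}, hence smooth.

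The one place you diverge is your closing ``main obstacle'': you believe Theorem~\ref{thm:rigidinf} must be invoked to close the argument. It is not needed, and in fact cannot be used here, since in the paper it is proved \emph{after} Theorem~\ref{thm:structg} and depends on the half-space theorem. The point is the definition of the overlap: a surface lies in $\Vv_a \cap \Vv_{a'}$ (equivalently, in the image of both $\Psi_a$ and $\Psi_{a'}$) precisely when it admits data with respect to \emph{both} $a$ and $a'$, and for such a surface the Remark following Lemma~\ref{lem:etag} shows that the $a'$-data are forced to equal the pair given by \eqref{eq:lienbasg} --- no rigidity input required. Theorem~\ref{thm:rigidinf} answers the converse question (when does a surface with data with respect to $a$ also admit data with respect to $a'$), which would be relevant to describing how large the overlaps are, but not to defining the atlas. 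Both your write-up and the paper leave implicit the verification that $\tau_a(\Vv_a \cap \Vv_{a'})$ is open in $\Gamma_a \times \R$, which the standard notion of a smooth atlas on a set requires; that is a shared, minor omission rather than a defect of your approach.
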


\begin{proof}
First, the family $\Gg$ is non-empty since $S_0 \in \Gg$. Consider a surface $S \in \Gg$ with graph coordinates at infinity $X^a$ and $\Vv_a \subset \Gg$ the set of surfaces admitting data in $\Uu_a$. From uniqueness in the implicit function theorem we know that the map:
\[
\tau_a: S' \in \Vv_a \mapsto (\gamma, \lambda) \in \Uu_a,
\]
where $\lac \gamma, \lambda \rac$ are the data of $S'$ with respect to $a$, is a bijection. To prove that the couple $(\Vv_a, \tau_a)$ form a smooth atlas, it only remains to show that the transition maps are smooth. But identities~\eqref{eq:lienbasg} are precisely the transition map from $(\Vv_a, \tau_a)$ to $(\Vv_{a'}, \tau_{a'})$, which concludes the proof.
\end{proof}

\section{A half-space theorem}

In \cite{NeSE}, B.~Nelli and R.~S{\'a}~Earp show a half-space theorem for the hyperboloid $S_0$. We extend this result to the family $\Gg$ of CMC-$1/ 2$ entire graphs with appropriate graph coordinates at infinity. The proof is based on the idea of B.~Daniel, W.~H.~Meeks and H.~Rosenberg \cite{DaMeRo} in Heisenberg space. A key-ingredient is to construct a family of surfaces with boundary; our tool to do this is the following:

\begin{lemma} \label{lem:defstable}

Let $E$ be a CMC-$1/ 2$ surface with boundary admitting graph coordinates at infinity $X^a$ defined on an admissible domain $\Omega_R \in \Dd$, with $R \in (0, 1)$ and $a \in \Cc^{2, \alpha}(\ov{\Omega_R})$. Denote $\gamma^a_{int}= a|_{\lac r= R \rac}$ and $\gamma^a_{ext}= a|_{\partial \D}$. Then for any $(\gamma_{int}, \gamma_{ext})$ in a neighborhood of $(\gamma^a_{int}, \gamma^a_{ext})$ in $(\Cc^{2, \alpha}(\S^1))^2$, there exists a CMC-$1/ 2$ surface admitting graph coordinates at infinity $X^{a'}$ defined on $\Omega_R$ such that $a'|_{\lac r= R \rac}= \gamma_{int}$ and $a'|_{\partial \D}= \gamma_{ext}$.

\end{lemma}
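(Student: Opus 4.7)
My plan is to adapt the strategy of Section~\ref{sec:graph} to the annular domain $\Omega_R$, using the compactified mean curvature operator and the implicit function theorem. The main novelty compared to Section~\ref{sec:graph} is that $\Omega_R$ has two boundary components on which Dirichlet data must be prescribed independently, rather than a single circle at infinity.

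First I would set up the compactified operator on $\Omega_R$ exactly as in~\eqref{eq:defcompcourb}:
\[
\ov H: \xi \in \Cc^{2, \alpha}(\ov{\Omega_R}) \mapsto \sqrt{|g(a)|}\lp H(a+ 2c^a\xi)- \tfrac{1}{2} \rp \in \Cc^{0, \alpha}(\ov{\Omega_R}).
\]
By Theorem~\ref{thm:compcourb}, $\ov H$ is smooth with $\ov H(0)= 0$, and by Lemma~\ref{lem:diffh} its differential at $\xi= 0$ is $\ov L= \sqrt{|g(a)|}L$, a second-order elliptic operator on $\ov{\Omega_R}$. I would then consider the smooth map
\[
\mathcal F: \xi \in \Cc^{2, \alpha}(\ov{\Omega_R}) \mapsto \big{(} \ov H(\xi),\, \xi|_{\lac r= R \rac},\, \xi|_{\partial \D} \big{)} \in \Cc^{0, \alpha}(\ov{\Omega_R}) \times \Cc^{2, \alpha}(\S^1) \times \Cc^{2, \alpha}(\S^1).
\]
Once $\mathcal F$ is known to be a local diffeomorphism near $\xi= 0$, then for $(\gamma_{int}, \gamma_{ext})$ near $(\gamma^a_{int}, \gamma^a_{ext})$ I can solve $\mathcal F(\xi)= \big{(} 0,\, (\gamma_{int}-\gamma^a_{int})/(2c^a|_{r=R}),\, \gamma_{ext}-\gamma^a_{ext} \big{)}$, and $a':= a+ 2c^a\xi$ then satisfies $a'|_{\lac r= R \rac}= \gamma_{int}$ and $a'|_{\partial \D}= \gamma_{ext}$, since $2c^a|_{\partial \D}= 1$ and $2c^a|_{\lac r= R \rac}$ is a strictly positive smooth function.

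The main obstacle is to prove that the differential $D\mathcal F(0): \xi \mapsto \big{(} \ov L\xi,\, \xi|_{r=R},\, \xi|_{\partial \D} \big{)}$ is an isomorphism. By standard Schauder theory this reduces to showing strict positivity of the first Dirichlet eigenvalue $\lambda_1$ of $-\ov L$ on $\Omega_R$. I would argue by contradiction using the positive Jacobi field $\phi^a$ from~\eqref{eq:phia}: if $\lambda_1 \leq 0$, Krein--Rutman produces a positive first eigenfunction $u$ with $\ov L u= -\lambda_1 u$ and $u|_{\partial \Omega_R}= 0$. Applying a Green identity for $\ov L$ on $\Omega_R$ (obtained from Proposition~\ref{prop:greeng} by adding an interior-circle contribution at $\lac r= R \rac$) to the pair $(u, \phi^a)$ yields
\[
\lambda_1 \int_{\Omega_R} \phi^a u\, d\ov A= (\text{contribution on } \partial \D)+ (\text{contribution on } \lac r= R \rac).
\]
On $\partial \D$ both $u$ and $\phi^a$ vanish, so the compactified contribution is zero; on $\lac r= R \rac$ only $u$ vanishes while $\phi^a> 0$, and Hopf's lemma (giving $\partial_\nu u< 0$ with outward conormal $\nu$) makes the contribution strictly positive. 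Since $\int_{\Omega_R} \phi^a u> 0$, this forces $\lambda_1> 0$, contradicting the assumption. The crucial feature exploited is the asymmetry between the two boundary components: $\phi^a$ is a positive Jacobi field vanishing only on the boundary at infinity, and without this asymmetry the argument would collapse.
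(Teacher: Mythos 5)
Your proof is correct and follows the same overall architecture as the paper's: compactify the mean curvature operator on $\Omega_R$, identify the linearization with the compactified Jacobi operator $\ov{L}$, and produce the deformed surface by an implicit/inverse function theorem once the Dirichlet problem for $\ov{L}$ on $\Omega_R$ is known to be uniquely solvable; both arguments end by setting $a'= a+ 2c^a \xi$ and using $2c^a|_{\partial \D}= 1$. The packaging differs only cosmetically: the paper prescribes the boundary data through a harmonic extension $\mu_a(\gamma_{int}, \gamma_{ext})$ plus a correction in $\Cc^{2, \alpha}_0(\ov{\Omega_R})$ and applies the implicit function theorem, whereas you invert the full map $\xi \mapsto \big{(} \ov{H}(\xi), \xi|_{\lac r= R \rac}, \xi|_{\partial \D} \big{)}$. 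The genuine difference lies in the key invertibility step. The paper disposes of it in one line --- $E$ is strictly stable because $\phi^a$ is a positive Jacobi function, citing Fischer-Colbrie and Schoen --- and then combines injectivity with the Fredholm alternative to get surjectivity of $\ov{L}_0$. You instead prove $\lambda_1(-\ov{L})> 0$ directly, via the Green identity of Proposition~\ref{prop:greeng} extended to the annulus and Hopf's lemma at the inner circle $\lac r= R \rac$. Your version is more self-contained and in fact addresses a point the paper leaves implicit: since $\phi^a$ vanishes on the outer boundary $\partial \D$, the textbook Fischer-Colbrie--Schoen criterion only yields $\lambda_1 \geq 0$, and the strict inequality requires precisely the asymmetry between the two boundary components that your Green/Hopf computation exploits (the first-eigenfunction contribution at $\lac r= R \rac$ is strictly positive while the one at $\partial \D$ vanishes). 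What the paper's route buys is brevity; what yours buys is a complete justification of strict stability.
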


\begin{proof}
Remark first that $E$ is strictly stable since the third coordinate $\phi^a$ of the upward pointing normal is a positive Jacobi function (see \cite{FCSc}). In particular, it means that the compactified Jacobi operator $\ov{L}_0$ restricted to $\Cc^{2, \alpha}_0(\ov{\Omega_R})$ is injective and, since it is a Fredholm operator of index zero, $\ov{L}_0$ is also surjective on $\Cc^{0, \alpha}(\ov{\Omega_R})$. In other words, using the same notations as of Section~\ref{sec:graph}, we get $K= \lac 0 \rac$, $K^{\bot}= \Cc^{0, \alpha}(\ov{\Omega_R})$, $K_0^{\bot}= \Cc^{2, \alpha}_0(\ov{\Omega_R})$ and $\ov{L}_0: \Cc^{2, \alpha}_0(\ov{\Omega_R}) \into \Cc^{0, \alpha}(\ov{\Omega_R})$ is an isomorphism.

Consider the map $\Phi: (\Cc^{2, \alpha}(\S^1))^2 \times \Cc^{2, \alpha}_0(\ov{\Omega_R}) \into \Cc^{0, \alpha}(\ov{\Omega_R})$ defined by:
\[
\Phi(\gamma_{int}, \gamma_{ext}, \sigma)= \ov{H} \big{(} \mu_a(\gamma_{int}, \gamma_{ext})+ \sigma \big{)},
\]
where $\ov{H}$ is the compactified mean curvature operator as defined in \eqref{eq:defcompcourb} and $\mu_a: (\Cc^{2, \alpha}(\S^1))^2 \into \Cc^{2, \alpha}(\ov{\Omega_R})$ is the operator such that $\mu_a(\gamma_{int}, \gamma_{ext})$ is the harmonic function on $\Omega_R$ with value
\[
\lp \frac{\gamma_{int}- \gamma^a_{int}}{2c^a|_{\lac r= R \rac}}, \gamma_{ext}- \gamma^a_{ext} \rp
\]
on the boundary of $\Omega_R$. The particular value at the interior boundary $\lac r= R \rac$ is made so that the boundary values of $2c^a \mu_a(\gamma_{int}, \gamma_{ext})$ are precisely $(\gamma_{int}- \gamma^a_{int}, \gamma_{ext}- \gamma^a_{ext})$. Hence, $D_3 \Phi(\gamma^a_{int}, \gamma^a_{ext}, 0): \Cc^{2, \alpha}_0(\ov{\Omega_R}) \into \Cc^{0, \alpha}(\ov{\Omega_R})$ is an isomorphism since $D_3 \Phi(\gamma^a_{int}, \gamma^a_{ext}, 0)= \ov{L}_0$ ---~for the same reason as in Lemma~\ref{lem:isomg}~--- and we can apply the implicit function theorem as in Section~\ref{subsec:gendef}. There exist a neighborhood $U$ of $(\gamma^a_{int}, \gamma^a_{ext})$ in $(\Cc^{2, \alpha}(\S^1))^2$ and a smooth map $\sigma: U \into \Cc^{0, \alpha}(\ov{\Omega_R})$ such that:
\[
\a (\gamma_{int}, \gamma_{ext}) \in U, \ \Phi \big{(} \gamma_{int}, \gamma_{ext}, \sigma(\gamma_{int}, \gamma_{ext}) \big{)}= 0.
\]
We can take $a'= a+ 2c^a \big{(} \mu_a(\gamma_{int}, \gamma_{ext})+ \sigma(\gamma_{int}, \gamma_{ext}) \big{)}$.
\end{proof}

We now can show the following half-space result:

\begin{theorem} \label{thm:halfspace}

Let $\Sigma$ be a CMC-$1/ 2$ surface which is properly immersed in $\H^2 \times \R$ and lies on one side of a CMC-$1/ 2$ entire graph $S \in \Gg$ admitting graph coordinates at infinity $X^a$ with $a \in \Cc^{2, \alpha}(\ov{\D})$. Suppose also that $\Sigma$ is well oriented with respect to $S$. Then $\Sigma$ coincides with $S$ up to a vertical translation.

\end{theorem}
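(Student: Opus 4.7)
The approach is a sliding argument combining the foliation of $\H^2\times\R$ by vertical translates of $S$ with barriers produced by Lemma~\ref{lem:defstable}. Assume without loss of generality that $\Sigma$ lies below $S$; the other case is symmetric. By Propositions~\ref{prop:invisom} and~\ref{prop:lambdag} the family $\{S+te_3\}_{t\in\R}$ is a smooth foliation of $\H^2\times\R$ by CMC-$1/2$ entire graphs. Define $\tau_\Sigma:\Sigma\to\R_{\le 0}$ by $\tau_\Sigma(w,z)=z-h_S(w)$, where $h_S$ is the height function of $S$, and set $d:=\sup_\Sigma\tau_\Sigma\in(-\infty,0]$. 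If $d$ is attained at an interior point $p_0\in\Sigma$, then $\Sigma$ is tangent to $S+de_3$ at $p_0$ from below; the ``well oriented'' hypothesis forces the mean curvature vectors at $p_0$ to coincide (both point upward, toward $S$), and the strong maximum principle for the CMC-$1/2$ equation gives $\Sigma=S+de_3$ in a neighborhood of $p_0$. By real-analyticity of CMC surfaces and connectedness, $\Sigma=S+de_3$ globally.

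The main difficulty is ruling out the case where $d$ is not attained. Suppose $d$ is not attained. Any sequence $p_n=(w_n,z_n)\in\Sigma$ with $\tau_\Sigma(p_n)\to d$ must diverge in $\H^2\times\R$ by properness of $\Sigma$ and continuity of $\tau_\Sigma$. Since $h_S$ is bounded on compact subsets of $\H^2$ and $z_n=\tau_\Sigma(p_n)+h_S(w_n)$, boundedness of $w_n$ would force $z_n$ to be bounded too, contradicting divergence; hence $|w_n|\to 1$. Thus the supremum can only be approached at the asymptotic boundary $\partial_\infty\H^2$. In particular, for every $R\in(0,1)$, the restricted supremum $D_R:=\sup\{\tau_\Sigma(p)\mid p\in\Sigma,\ |w_p|=R\}$ is strictly less than $d$.

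To reach a contradiction, fix $R$ close to $1$ and let $X^{\tilde a}$ denote graph coordinates at infinity for $S+de_3$ on $\Omega_R$. Apply Lemma~\ref{lem:defstable} with inner boundary data $\tilde a|_{\{r=R\}}$ (unchanged) and outer boundary data $\tilde a|_{\partial\D}+\epsilon$ to obtain, for each small $\epsilon>0$, a CMC-$1/2$ surface $\tilde S_\epsilon$ on $\Omega_R$ matching $S+de_3$ at $\{r=R\}$ and strictly above it on $\overline{\Omega_R}\setminus\{r=R\}$ by the interior maximum principle for the CMC operator; moreover, because the factor $(1+r^2)/(1-r^2)$ multiplies $e^{\tilde a+\epsilon}-e^{\tilde a}$, the vertical gap between $\tilde S_\epsilon$ and $S+de_3$ blows up as $r\to 1$. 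Hence $\tilde S_\epsilon$ lies strictly above $\Sigma$ on the portion of $\Sigma$ over $\overline{\Omega_R}$: near $\partial\D$ the $\epsilon$-gap dominates, while at $\{r=R\}$ the strict gap $d-D_R>0$ is independent of $\epsilon$. Slide $\tilde S_\epsilon-s e_3$ downward; a first contact value $s_0(\epsilon)>0$ is reached, and for $\epsilon$ small enough so that $s_0(\epsilon)<d-D_R$ the contact cannot occur at $\{r=R\}$, nor (by the blown-up gap) at $\partial\D$, so it lies in the interior of $\Omega_R$. The strong maximum principle at this interior contact forces local coincidence of $\Sigma$ with $\tilde S_\epsilon-s_0(\epsilon)e_3$. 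Letting $\epsilon\to 0$, the perturbed surfaces converge smoothly on compacts of $\Omega_R$ to $S+de_3$, $s_0(\epsilon)\to 0$, and (up to extraction) the interior contact points accumulate at an interior point of $\Sigma$ realizing $\tau_\Sigma=d$, contradicting the non-attainment hypothesis. The main technical obstacle lies precisely in this last step: one must verify that the interior contact points remain in a fixed compact subset of $\Omega_R$ as $\epsilon\to 0$, which requires a uniform comparison of the asymptotic rates of $\tilde S_\epsilon$, $S+de_3$ and $\Sigma$ near $\partial\D$.
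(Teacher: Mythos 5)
Your strategy is essentially the one the paper uses: reduce to the case where the extremal vertical translate of $S$ is disjoint from $\Sigma$ but approached only along sequences escaping to $\partial_{\ii}\H^2$, build a CMC-$1/2$ barrier over an exterior domain $\Omega_R$ via Lemma~\ref{lem:defstable} by shifting the value at infinity by $\epsilon$ (so that the vertical gap to $S+de_3$ blows up as $r\to 1$), slide it vertically to a first contact, and check that the contact is interior (not over $\{r=R\}$ because of the gap $d-D_R$, not at infinity because of the blown-up $\epsilon$-gap). Up to and including the interior tangency and the invocation of the strong maximum principle, the argument is sound, modulo routine verifications you omit: that $D_R<d$ (properness confines a maximizing sequence over the circle to a compact set), that the comparison of $\wt{S}_\epsilon$ with $S+de_3$ near $r=1$ is carried out by exhaustion since both heights blow up there, and that the set of contact values is nonempty with positive infimum attained at a point over a compact sub-annulus.

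The gap is in your endgame. Once the strong maximum principle gives local coincidence of $\Sigma$ with $\wt{S}_{\epsilon}-s_0(\epsilon)e_3$ at the interior contact point, you should stop and conclude at this \emph{fixed} $\epsilon$: by unique continuation $\Sigma$ contains the whole interior of the barrier over $\Omega_R$, hence, $\Sigma$ being closed (properly immersed), it contains limit points over the circle $\{r=R\}$ at signed height $\tau_\Sigma=d-s_0(\epsilon)>D_R$, contradicting the definition of $D_R$. This is exactly how the paper concludes: $\Sigma$ cannot coincide with a surface-with-boundary over $\Omega_R$. Instead you press on to an $\epsilon\to 0$ limit, which requires the contact points to remain in a compact subset of $\Omega_R$ uniformly in $\epsilon$; as you yourself admit, this is not established, and it does not follow from your estimates --- the contact point only satisfies $g_\epsilon(w_{q(\epsilon)})\le s_0(\epsilon)\to 0$, where $g_\epsilon$ denotes the gap between $\wt{S}_\epsilon$ and $S+de_3$, and this gives no lower bound on $1-r_{q(\epsilon)}$ because the region where $g_\epsilon$ is large shrinks toward $\partial\D$ with $\epsilon$. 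So as written the proof is incomplete, but the obstacle you flag is a red herring: the limit is unnecessary, since the contradiction is already available at fixed $\epsilon$ from the strict separation over the inner boundary circle.
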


By ``well oriented'', we mean that the mean curvature vector of $\Sigma$ points in the connected component of $\H^2 \times \R$ bounded by $S$ and $\Sigma$, so that classical maximum principle can apply. But since $\Sigma$ is not necessarily embedded, this condition has a meaning only for points of $\Sigma$ lying on the boundary of the connected component (see \cite[Section~4]{Ma}). For sake of simplicity, we suppose in the proof that $\Sigma$ is above $S$ ; the remaining case can be treated exactly the same way because of the orientation condition.

\medskip

\begin{proof}
Denote $T^c: \H^2 \times \R \into \H^2 \times \R$ the vertical translation by $c \in \R$ and:
\[
c_0= \inf \lac c \in \R \st[big] \Sigma \cap T^c(S) \neq \emptyset \rac.
\]
If $\Sigma \cap T^{c_0}(S) \neq \emptyset$ then by maximum principle, $\Sigma$ coincides with $T^{c_0}(S)$.

From now on, we suppose $\Sigma \cap T^{c_0}(S)= \emptyset$ and ---~up to a vertical translation~--- $c_0= 0$. In other words:
\[
\Sigma \cap S= \emptyset \quad \text{and} \quad \a c> 0, \ \Sigma \cap T^c(S) \neq \emptyset.
\]
We want to construct a CMC-$1/ 2$ surface with boundary intersecting $\Sigma$ in an interior point. To do so, consider $R \in (0, 1/2)$ and admissible domains $\Omega_R, \Omega_{2R} \in \Dd$. There exists $\delta> 0$ such that $\Sigma$ intersects $T^c(S)$ only inside the exterior domain $\Omega_{2R} \times \R$ for any $0< c < 2\delta$:
\[
\a c< 2\delta, \ \big{(} T^c(S) \cap \Sigma \big{)} \subset \Omega_{2R} \times \R.
\]
Denote $E= T^{\delta}(S) \cap (\Omega_R \times \R)$. $E$ is a CMC-$1/ 2$ surface with boundary. We can apply Lemma~\ref{lem:defstable} to deform $E$ and construct a family $\big{(} E(\epsilon) \big{)}_{\epsilon \geq 0}$ of CMC-$1/ 2$ surfaces with the same boundary and at prescribed distance at infinity from $E$. Namely:

\begin{itemize}

	\item $E(\epsilon)$ is at constant asymptotic horizontal signed distance $-\epsilon$ from $E$;

	\item $E(\epsilon)$ coincides with $E$ on the interior boundary $\lac |w|= R \rac \times \R$ of $\Omega_R \times \R$;

	\item $E(0)= E$.

\end{itemize}

\nidt Moreover, by the implicit function theorem applied in Lemma~\ref{lem:defstable},  we know that the family $E(\epsilon)$ is uniformly smooth. Hence, $E(\epsilon)$ converges to $E$ when $\epsilon \into 0$ and since $E \cap \Sigma \neq \emptyset$, there exists $\epsilon_0> 0$ such that $E(\epsilon_0) \cap \Sigma \neq \emptyset$.

At infinity $E(\epsilon_0)$ is outside $S$, thus $T^c \big{(} E(\epsilon_0) \big{)} \cap \Sigma= \emptyset$ for large $c< 0$. Consider:
\[
c_1= \sup \lac c< 0 \st[big] T^c \big{(} E(\epsilon_0) \big{)} \cap \Sigma= \emptyset \rac \leq 0.
\]
We know that $T^{c_1} \big{(} E(\epsilon_0) \big{)} \cap \Sigma \neq \emptyset$ since the first intersection point cannot be at infinity. And this intersection does not occur on the boundary of $T^{c_1} \big{(} E(\epsilon_0) \big{)}$, since the boundary lies outside $\Omega_{2R} \times \R$. Therefore, the first intersection point is interior to $T^{c_1} \big{(} E(\epsilon_0) \big{)}$ and by maximum principle, $\Sigma$ coincides with $T^{c_1} \big{(} E(\epsilon_0) \big{)}$ over $\Omega_R$, which is impossible.
\end{proof}

We can deduce from Theorem~\ref{thm:halfspace} a uniqueness result at infinity for the family $\Gg$:

\begin{theorem} \label{thm:rigidinf}

Let $S, S'$ be CMC-$1/ 2$ entire graphs in $\Gg$ admitting graph coordinates at infinity $X^a, X^{a'}$ respectively, with $a, a' \in \Cc^{2, \alpha}(\ov{\D})$. Suppose there exist a surface $\Sigma$ admitting data $(\gamma, \lambda) \in \Gamma_a \times \R$ with respect to $S$ and, as in \eqref{eq:lienbasg}, denote:
\[
\lambda'= \frac{1}{|\phi^{a'}|_{L^2(\D)}^2} \bigg{\lan} \frac{\eta_a(\gamma, \lambda)- a'}{2c^{a'}}- \mu_{a'}(\gamma), \phi^{a'} \bigg{\ran}_{L^2(\D)},
\]
with $X^{\eta_a(\gamma, \lambda)}$ the graph coordinates at infinity of $\Sigma$. Suppose $\gamma \in \Gamma_{a'}$, then $\Sigma$ admits data $\lac \gamma, \lambda' \rac$ with respect to $S'$; in other words, $\eta_a(\gamma, \lambda)= \eta_{a'}(\gamma, \lambda')$. 
\end{theorem}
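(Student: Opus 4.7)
The plan is to reduce the claim to a geometric uniqueness step via the half-space theorem (Theorem~\ref{thm:halfspace}) and then read off the coefficient $\lambda'$ from an $L^2$-projection. Write $\wt{\eta} := \eta_a(\gamma, \lambda)$, which is the function defining the graph coordinates at infinity of $\Sigma$.

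First I would use the hypothesis $\gamma \in \Gamma_{a'}$ to apply the construction of Section~\ref{subsec:gendef} with respect to $a'$, producing a CMC-$1/2$ entire graph $S'' := X^{\eta_{a'}(\gamma, 0)} \in \Gg$. By Lemma~\ref{lem:etag} Point~2, $S''$ has value at infinity $\gamma$, and by Proposition~\ref{prop:lambdag} the vertical translates of $S''$ are exactly the graphs $X^{\eta_{a'}(\gamma, \mu)}$, $\mu \in \R$. The main step is then to show $\Sigma$ itself is a vertical translate of $S''$. Since $\wt{\eta}|_{\partial \D} = \gamma = \eta_{a'}(\gamma, 0)|_{\partial \D}$, the $\Cc^{2, \alpha}(\ov{\D})$ difference $\wt{\eta} - \eta_{a'}(\gamma, 0)$ vanishes on $\partial \D$ and is $O(1- r)$. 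Consequently the pointwise height difference
\[
h(\wt{\eta})- h(\eta_{a'}(\gamma, 0))= 2\frac{1+ r^2}{1- r^2} \big{(} e^{\wt{\eta}}- e^{\eta_{a'}(\gamma, 0)} \big{)}
\]
is bounded on $\D$. For $c> 0$ sufficiently large, $T^c(S'')$ therefore lies strictly above $\Sigma$; it remains a CMC-$1/2$ entire graph in $\Gg$ by Proposition~\ref{prop:invisom}, and $\Sigma$ is well oriented with respect to $T^c(S'')$ since the mean curvature vector of the graph $\Sigma$ agrees with its upward unit normal. Applying Theorem~\ref{thm:halfspace} yields $\Sigma= T^{c_0}(S'')$ for some $c_0 \in \R$.

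Finally, Proposition~\ref{prop:lambdag} supplies a unique $\lambda''_0 \in \R$ with $T^{c_0}(S'')= X^{\eta_{a'}(\gamma, \lambda''_0)}$, so $\wt{\eta}= \eta_{a'}(\gamma, \lambda''_0)$. By construction of $\eta_{a'}$ this means
\[
\frac{\wt{\eta}- a'}{2c^{a'}}= \mu_{a'}(\gamma)+ \lambda''_0 \phi^{a'}+ \sigma_{a'}(\gamma, \lambda''_0),
\]
with $\sigma_{a'}(\gamma, \lambda''_0)$ in the analogue of $K_0^{\bot}$ relative to $a'$, hence $L^2(\D)$-orthogonal to $\phi^{a'}$. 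Taking the $L^2(\D)$ inner product with $\phi^{a'}$ on both sides gives $\lambda''_0= \lambda'$ exactly as defined in the statement, so $\eta_a(\gamma, \lambda)= \eta_{a'}(\gamma, \lambda')$, which is the conclusion.

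I expect the main obstacle to be the geometric step: verifying that $\Sigma$ is forced to be a vertical translate of $S''$. The required ingredients are the $\Cc^{2, \alpha}$-regularity up to $\partial \D$ of both functions ---~giving bounded height difference despite the blow-up of the hyperbolic scale factor~--- and the orientation check that legitimizes the application of Theorem~\ref{thm:halfspace}. Once $\Sigma$ is identified with $T^{c_0}(S'')$, the identification $\lambda''_0= \lambda'$ is a pure projection calculation.
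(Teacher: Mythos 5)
Your proposal is correct and follows essentially the same route as the paper: reduce to the half-space theorem (Theorem~\ref{thm:halfspace}) to identify $\Sigma$ with a vertical translate of a graph built over $a'$ with value at infinity $\gamma$, then recover $\lambda'$ by the $L^2(\D)$-projection onto $\phi^{a'}$ using the orthogonality of the $K_0^{\bot}$-component. The only (minor) divergence is in how the one-sidedness is secured: the paper translates $\Sigma$ and controls the sign of $h(\eta)- h\big{(}\eta_{a'}(\gamma, \lambda')\big{)}$ near $\partial \D$ through the radial derivative of the difference of the ``eta's'' at the boundary, whereas you translate $S''$ and invoke the global boundedness of the height difference, which indeed follows from the $\Cc^{2, \alpha}(\ov{\D})$-regularity and the equality of the boundary values --- a slightly more direct handling of the same technical point.
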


\begin{proof}
We first make three remarks:

\begin{itemize}

	\item Since $\gamma \in \Gamma_{a'}$, the function $\eta_{a'}(\gamma, \lambda')$ exists.

	\item If $\Sigma$ admits data with respect to $S'$, then from \eqref{eq:lienbasg} and the definition of $\lambda'$ above, we know that the data are precisely $\lac \gamma, \lambda' \rac$. 

	\item To show that $\Sigma$ admits data with respect to $S'$, we only have to show that a vertical translate $T^{c_0}(\Sigma)$ of $\Sigma$, with $c_0 \geq 0$, admits data with respect to $S'$.

\end{itemize}

\nidt Consider graph coordinates at infinity $X^{\eta}$ for $T^c(\Sigma)$ with $c \geq 0$. Suppose there exist $r_0 \in (0, 1)$ such that the height functions of $T^c(\Sigma)$ and $X^{\eta_{a'}(\gamma, \lambda')}$ verify $h(\eta)> h \big{(} \eta_{a'}(\gamma, \lambda') \big{)}$ for any $(r, \theta) \in [r_0, 1) \times \S^1$. We take:
\[
c_0= c+ \max_{[0, r_0] \times \S^1} \lb h \big{(} \eta_{a'}(\gamma, \lambda') \big{)}- h(\eta) \rb,
\]
so that $T^{c_0}(\Sigma)$ is above $X^{\eta_{a'}(\gamma, \lambda')}$. Applying Theorem~\ref{thm:halfspace}, we deduce that $T^{c_0}(\Sigma)$ is a vertical translate of $X^{\eta_{a'}(\gamma, \lambda')}$, and hence admits data with respect to $S'$.

Remains to show the existence of $r_0$. Note that $h(\eta)> h \big{(} \eta_{a'}(\gamma, \lambda') \big{)}$ if and only if $\eta> \eta_{a'}(\gamma, \lambda')$, so that we only have to work with the ``eta's''. Since $X^{\eta}$ are the graph coordinates at infinity of $T^c(\sigma)$, we have:
\[
h(\eta)= h\big{(} \eta_a(\gamma, \lambda) \big{)}+ c \quad \text{i.e.} \quad \eta= \eta_a(\gamma, \lambda)+ \log \lp 1+ c\frac{e^{-\eta_a(\gamma, \lambda)}}{2} \frac{1- r^2}{1+ r^2} \rp.
\]
Moreover, by definition, $\eta_{a'}(\gamma, \lambda')$ writes:
\[
\eta_{a'}(\gamma, \lambda')= a'+ 2c^{a'} \big{[} \mu_{a'}(\gamma)+ \lambda' \phi^{a'}+ \sigma(\gamma, \lambda') \big{]},
\]
and we use the decomposition of $\Cc^{2, \alpha}(\ov{\D})$ induced by $X^{a'}$ to express $\eta_a(\gamma, \lambda)$:
\[
\eta_a(\gamma, \lambda)= a'+ 2c^{a'} \big{[} \mu_{a'}(\gamma)+ \lambda' \phi^{a'}+ \sigma \big{]}.
\]
In this expression, the argument of $\mu_{a'}$ is indeed $\eta_a(\gamma, \lambda)|_{\partial \D}= \gamma$, the factor of $\phi^{a'}$ is exactly $\lambda'$ by definition of $\lambda'$ and $\sigma \in \Cc^{2, \alpha}_0(\ov{\D})$ is orthogonal to $\phi^{a'}$. We compute:
\begin{align*}
\eta- \eta_{a'}(\gamma, \lambda') & = \lc \eta_a(\gamma, \lambda)+ \log \lp 1+ c\frac{e^{-\eta_a(\gamma, \lambda)}}{2} \frac{1- r^2}{1+ r^2} \rp \rc- \eta_{a'}(\gamma, \lambda') \\
                                  & = \underbrace{2c^{a'} \big{[} \sigma- \sigma(\gamma, \lambda') \big{]}}_{\in \Cc^{2, \alpha}_0(\ov{\D})}+ \log \lp 1+ c\frac{e^{-\eta_a(\gamma, \lambda)}}{2} \frac{1- r^2}{1+ r^2} \rp,
\end{align*}
which gives:
\[
\lr \der{r} \big{(} \eta- \eta_{a'}(\gamma, \lambda') \big{)} \rb_{\partial \D}= 2\lr \der{r} \Big{(} c^{a'} \big{[} \sigma- \sigma(\gamma, \lambda') \big{]} \Big{)} \rb_{\partial \D}- c\frac{e^{-\gamma}}{2}.
\]
Taking:
\[
c= 2\max_{\partial \D} e^{\gamma} \lp 1+ 2\der{r} \Big{(} c^{a'} \big{[} \sigma- \sigma(\gamma, \lambda') \big{]} \Big{)} \rp,
\]
we get: 
\[
\lr \der{r} \big{(} \eta- \eta_{a'}(\gamma, \lambda') \big{)} \rb_{\partial \D} \leq -1.
\]
Thus, there exists $r_0 \in (0, 1)$ such that $\eta- \eta_{a'}(\gamma, \lambda')$ is strictly decreasing (in $r$) on $[r_0, 1) \times \S^1$. It means that $\eta- \eta_{a'}(\gamma, \lambda')> 0$ on $[r_0, 1) \times \S^1$ since $\eta- \eta_{a'}(\gamma, \lambda')$ is identically zero on $\partial \D$, which concludes the proof.
\end{proof}

\section[Deformations of CMC-1/2 annuli]{Deformations of CMC-$1/ 2$ annuli} \label{sec:ann}

R.~S{\'a}~Earp and E.~Toubiana showed in \cite{SETo} that ---~up to a not necessarily orientation preserving isometry of $\H^2 \times \R$~--- a rotational CMC-$1/ 2$ vertical annulus is a bigraph, symmetrical with respect to the slice $\H^2 \times \lac 0 \rac$. The upper graph part of such an annulus admits graph coordinates at infinity $(F, \bar{h}_{\beta})$, with $\beta$ a positive real number, $\beta \neq 1$ and $\bar{h}_{\beta}$ defined by:
\[
\bar{h}_{\beta}(r)= \int_{|\log \beta|}^{2\log \lp \frac{1+ r}{1- r} \rp} \frac{\cosh t- \beta}{\sqrt{2\beta \cosh t- 1- \beta^2}} dt \quad \text{where} \quad r \geq \lb \frac{1- \sqrt{\beta}}{1+ \sqrt{\beta}} \rb= R_{\beta}.
\]
We denote by $A_{\beta}$ this annulus, which is embedded if $0< \beta< 1$ and only immersed when $\beta> 1$.

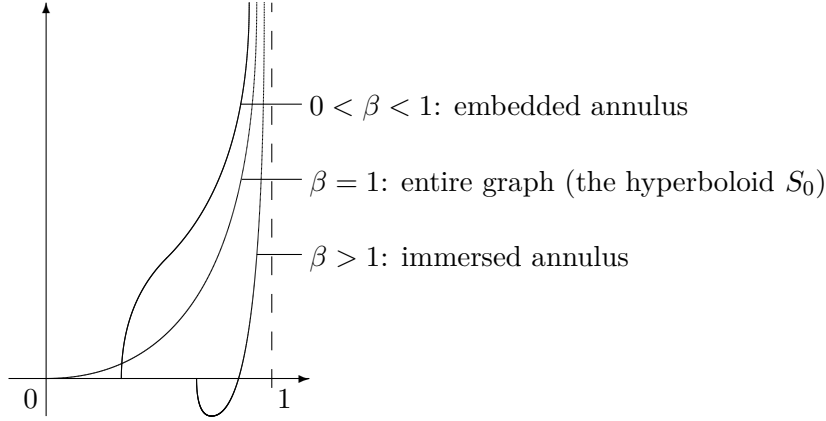
\begin{figure}[htbp]

\centering

\begin{picture}(10.9, 6)(-0.5, -0.5)

\put(-0.5, 0){\vector(1, 0){4}} \put(0, -0.5){\vector(0, 1){5.5}}
\multiput(3, -0.125)(0, 0.5){10}{\line(0, 1){0.25}} \put(3, 4.875){\line(0, 1){0.125}}
\put(-0.3, -0.39){$0$} \put(3.07, -0.39){$1$}
\qbezier(1, 0)(1, 1)(1.6, 1.6) \qbezier(1.6, 1.6)(2.7, 2.7)(2.7, 5) 
\qbezier(0, 0)(2.8, 0)(2.8, 5)
\qbezier(2, 0)(2, -0.5)(2.2, -0.5) \qbezier(2.2, -0.5)(2.9, -0.5)(2.9, 5)
\put(3.5, 3.5){$0< \beta< 1$: embedded annulus} \put(3.4, 3.65){\line(-1, 0){0.82}}
\put(3.5, 2.5){$\beta= 1$: entire graph (the hyperboloid $S_0$)} \put(3.4, 2.65){\line(-1, 0){0.8}}
\put(3.5, 1.5){$\beta> 1$: immersed annulus} \put(3.4, 1.65){\line(-1, 0){0.6}}

\end{picture}

\caption{Profile curves of rotational CMC-$1/ 2$ examples in the Poincar{\'e} disk model~\eqref{eq:modpoinc}}

\end{figure}

\nidt We have the following asymptotic development as $r \into 1$:
\begin{equation} \label{eq:asdev}
\bar{h}_{\beta}(r)= \frac{1}{\sqrt{\beta}} \frac{1+ r}{1- r}+ O(1),
\end{equation}
which means that the restriction of $(F, \bar{h}_{\beta})$ to the exterior domain $\Omega_{R_{\beta}}$ is in $\Ee$ with constant value $-\log \beta$ at infinity. Therefore, the method developed in Section~\ref{sec:graph} should adapt to the study of deformations of these annuli.

\medskip

For our purpose, we slightly change the notations. Fix $\beta> 0$ with $\beta \neq 1$; the annulus $A_{\beta}$ is now the model surface. To deform rotational annuli, we need conformal coordinates to provide a compactification of the mean curvature. The function $\psi: [R_{\beta}, 1] \into \R$ defined by:
\[
\a x \in [R_{\beta}, 1], \ \psi(x)= \frac{4}{|1- \beta|} \int_{R_{\beta}}^x \frac{dt}{\sqrt{(t^2- R_{\beta}^2)(R_{\beta}^{-2}- t^2)}}
\]
is an increasing bijection onto $[0, T]$ where:
\[
T= \frac{4}{|1- \beta|} \int_{R_{\beta}}^1 \frac{dt}{\sqrt{(t^2- R_{\beta}^2)(R_{\beta}^{-2}- t^2)}}.
\]
If $r: [-T, T] \into [R_{\beta}, 1]$ denotes the even function such that $r|_{[0, T]}= \psi^{-1}$, then a conformal parametrization of the annulus $A_{\beta}$, written in cylindrical coordinates, is the following:
\begin{equation} \label{eq:paramconfa}
X^0: (s, \theta) \in \Omega^{\beta} \mapsto \Big{(} F \big{(} r(s), \theta \big{)}, h_{\beta}(s) \Big{)} \in \H^2 \times \R,
\end{equation}
where $\Omega^{\beta}= (-T, T) \times \S^1$ and $h_{\beta}: (-T, T) \into \R$ denotes the odd extension of $\bar{h}_{\beta} \circ r$ i.e.:
\[
h_{\beta}(s)= \lac \begin{array}{ll}
\bar{h}_{\beta} \big{(} r(s) \big{)}  & \text{if } s \geq 0 \\
-\bar{h}_{\beta} \big{(} r(s) \big{)} & \text{if } s \leq 0
\end{array} \rr.
\]
We also identify functions over $A_{\beta}$ with functions over $\Omega^{\beta}$. The \emph{cylindrical parametrization} of a deformed annulus is the following immersion:
\begin{equation} \label{eq:cylparam}
X^{\eta}: (s, \theta) \in \Omega^{\beta} \mapsto \Big{(} e^{-\chi(s) \eta(s, \theta)} F \big{(} r(s), \theta \big{)}, e^{\big{(} 1 -\chi(s) \big{)} \eta(s, \theta)} h_{\beta}(s) \Big{)},
\end{equation}
where $\eta \in \Cc^{2, \alpha}(\ov{\Omega^{\beta}})$ and $\chi: [-T, T] \into [0, 1]$ is a smooth increasing even function such that $\chi|_{[0, T/ 3]} \equiv 1$ and $\chi|_{[2T/ 3, T]} \equiv 0$. The determinant of the first fundamental form is $|g(\eta)|$, the mean curvature $H(\eta)$ and the values at infinity are the couple $\big{(} \eta(T, \cdot), \eta(-T, \cdot) \big{)} \in (\Cc^{2, \alpha}(\S^1))^2$.

\begin{remark}

Contrary to the case of entire graphs, the deformation here is not only vertical (on the third coordinate). It is due to the fact that a vertical deformation would be tangent to the rotational annulus on the intersection of the annulus with its horizontal symmetry plane ---~corresponding to $s= 0$ in the cylindrical coordinates.

\end{remark}

\subsection{Non degeneracy of rotational annuli}

As in Section~\ref{sec:graph}, we need to understand the Jacobi functions in order to control the deformations. Thus, we focus the study on annuli in $\Ee$ that are non degenerate in the following sense:

\begin{definition}

A surface in $\Ee$ is said to be \emph{non degenerate} if the only Jacobi functions that are zero at infinity on each end of the surface (i.e. when $r= 1$ in the graph coordinates at infinity of the ends) come from isometries of $\H^2 \times \R$.

\end{definition}

\begin{remark} \label{rk:nondegen}

A direct consequence of the proof of Proposition~\ref{prop:invisom} and the shape of the ends is that if an annulus in $\Ee$ is non degenerate, then the space of Jacobi functions which are zero on the boundary is $1$-dimensional, generated by the vertical component of the unit normal. Another fact is that, since the rank of the Jacobi operator is locally constant, small deformations of a non degenerate annulus are still non degenerate. Therefore, the method used in Section~\ref{sec:graph} can be strictly transposed to the study of deformations in a small neighborhood of a non degenerate example.

\end{remark}

\begin{proposition} \label{prop:nondegen}

The annulus $A_{\beta}$ is non degenerate for any value of $\beta \ (\neq 1)$.

\end{proposition}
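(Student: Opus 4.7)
The plan is to exploit the rotational symmetry of $A_\beta$ in order to reduce the Jacobi equation to a family of ODEs, one per Fourier mode in $\theta$, and then analyze each mode separately. In the conformal cylindrical parametrization~\eqref{eq:paramconfa}, the induced metric has the form $e^{2\omega(s)}(ds^2 + d\theta^2)$ for an even function $\omega$ on $(-T, T)$, so the Jacobi operator reads $L = e^{-2\omega}\bigl(\partial_s^2 + \partial_\theta^2 + V(s)\bigr)$ with $V(s) = e^{2\omega}\bigl(|A|^2 + \mathrm{Ric}(N,N)\bigr)$ an even potential. Writing a Jacobi function as $u(s,\theta) = \sum_{n \in \mathbb{Z}} u_n(s) e^{in\theta}$, the condition that $u$ vanishes at both ends translates to the family of Sturm–Liouville problems
\begin{equation*}
u_n''(s) + \bigl(V(s) - n^2\bigr) u_n(s) = 0, \qquad u_n(\pm T) = 0, \qquad n \in \mathbb{Z}.
\end{equation*}
The evenness of $V$ further splits the $2$-dimensional solution space of each equation into an even and an odd subspace, and by the definition of non-degeneracy together with Remark~\ref{rk:nondegen} it suffices to show that this boundary value problem has a trivial kernel for $n \neq 0$ and a one-dimensional kernel (spanned by $\phi_0 = \langle N, e_3\rangle$) for $n = 0$.

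For the mode $n = 0$, the vertical translation Jacobi field $\phi_0$ is odd in $s$ (the reflection $x_3 \mapsto -x_3$ preserves $A_\beta$ and reverses $e_3$) and vanishes at $s = \pm T$ since $N$ becomes asymptotically horizontal at the ends. An independent even solution $\psi_0$ is produced by differentiating the $1$-parameter family $\{A_\beta\}$ with respect to $\beta$ (which preserves the CMC-$1/2$ condition) and extracting its normal component; the required $\partial_\beta \bar h_\beta$ can be computed from the integral formula defining $\bar h_\beta$. Using the asymptotic expansion~\eqref{eq:asdev}, one checks that $\psi_0$ has a non-zero limit at $s = \pm T$, so $\psi_0(\pm T) \neq 0$ by evenness. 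Any mode-$0$ solution vanishing at both endpoints is then forced to be proportional to $\phi_0$.

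For $|n| = 1$, the two horizontal hyperbolic translations of $\H^2$ yield Killing fields $Y_1, Y_2$ whose normal projections are Jacobi fields of the form $u_1(s) \cos\theta$ and $u_1(s) \sin\theta$, with $u_1$ even in $s$ by the same parity argument. Because the Killing norm of a hyperbolic translation diverges at $\partial_\infty \H^2$ while the horizontal part of $N$ has a non-zero limit at the ends, an asymptotic computation shows $u_1(\pm T) \neq 0$. If $v_1$ denotes the odd independent solution of the mode-$1$ ODE, then the constancy of the Wronskian $u_1 v_1' - u_1' v_1$ prevents simultaneous vanishing of $u_1$ and $v_1$ at $s = T$, so $v_1(T) \neq 0$ as well. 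A general mode-$1$ solution $\alpha u_1 + \beta v_1$ vanishing at $\pm T$ then satisfies $\alpha u_1(T) \pm \beta v_1(T) = 0$, forcing $\alpha = \beta = 0$.

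The main obstacle is the remaining modes $|n| \geq 2$, for which no ambient isometry provides an explicit Jacobi field and the $2$-dimensional solution space must be controlled purely by ODE analysis. The idea is to combine an indicial analysis at the singular endpoints $s = \pm T$ (where $V$ has a prescribed behavior deducible from~\eqref{eq:asdev}) with a Sturm-type comparison on the interior of $(-T, T)$: once $n^2$ exceeds $\sup V$, the equation $u_n'' = (n^2 - V) u_n$ admits only monotone-exponential solutions with no Dirichlet pair, and the remaining finitely many modes are ruled out by a more refined eigenvalue estimate exploiting the explicit form of $V$ for the rotational model. The hypothesis $\beta \neq 1$ enters precisely here, ensuring that $A_\beta$ is a genuine annulus distinct from the hyperboloid so that $V$ has the expected structure.
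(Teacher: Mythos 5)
Your reduction to the Fourier--parity decomposition and the resulting Dirichlet problems $u_n''+ (V- n^2)u_n= 0$, $u_n(\pm T)= 0$, is exactly the paper's starting point (the paper writes the compactified operator as $\Delta+ q(s)$ with $q$ even), and your treatment of the mode $n= 0$ --- odd solution $\phi_0$ from vertical translation, even solution $\psi_0$ from differentiating the family in $\beta$, with $\psi_0(\pm T) \neq 0$ read off from \eqref{eq:asdev} --- is a workable, if different, way to settle that mode. But there are two genuine gaps. First, in mode $1$ the Wronskian step is a non sequitur: constancy of $u_1 v_1'- u_1' v_1$ only forbids $u_1$ and $v_1$ from vanishing \emph{simultaneously} at $s= T$; since you already have $u_1(T) \neq 0$, the Wronskian at $T$ equals $u_1(T) v_1'(T)- u_1'(T) v_1(T)$ and is perfectly compatible with $v_1(T)= 0$, so the odd mode-$1$ Dirichlet solution is not excluded. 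Second, the modes with $1 \leq n^2 \leq \sup V$ --- which is where the content of the proposition sits --- are left as a plan (``a more refined eigenvalue estimate exploiting the explicit form of $V$'') rather than an argument; the convexity argument you invoke only works once $n^2> \sup V$.

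The missing idea, which is how the paper disposes of all modes at once, is to use $\phi= \langle N, e_3 \rangle$ itself as a Sturm comparison function: $\phi$ solves the $n= 0$ equation and vanishes at $s \in \lac -T, 0, T \rac$ and nowhere in $(0, T)$. For $n \geq 1$ the potential $V- n^2$ is strictly smaller than $V$, so between any two consecutive zeros of a nontrivial solution of the mode-$n$ equation, $\phi$ must vanish. Applied to an odd Dirichlet solution (zeros at $0$ and $T$) this is an immediate contradiction, which kills your mode-$1$ odd solution and every higher odd mode; applied to an even Dirichlet solution it forces the solution to be nowhere zero in $(-T, T)$, so that $n^2$ would have to be the first Dirichlet eigenvalue of $d^2/ ds^2+ V$, which the paper then rules out using that $\phi$ is an eigenfunction of that operator with eigenvalue $0$. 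I would rebuild modes $|n| \geq 1$ around this comparison rather than around explicit Killing fields; note in passing that for $\beta> 1$ the normal is vertical on an interior circle (where $\bar{h}_{\beta}'$ vanishes), so your $u_1$ changes sign there and could not be used for a positivity-based eigenvalue bound either.
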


\begin{proof}
If $L$ denotes the Jacobi operator of $A_{\beta}$, the compactified Jacobi operator $\ov{L}= \sqrt{|g(0)|} L$ of $A_{\beta}$ can be written $\Delta+ q(s)$ in the conformal parametrization~\eqref{eq:paramconfa}, with $\Delta$ the flat laplacian and $q \in \Cc^0([-T, T])$. Moreover, $A_{\beta}$ being symmetric with respect to $\H^2 \times \lac 0 \rac$, the function $q$ is even.

Since a Jacobi function is $2\pi$-periodic in $\theta$, using the Fourier decomposition, we reduce the problem to solving a family~\eqref{eq:dn} of Dirichlet problems on $\Cc^2([-T, T])$ for $n \in \N$:
\begin{equation} \label{eq:dn}
\lac \begin{array}{l}
u''+ \big{(} q(s)- n^2 \big{)} u= 0 \\
u(-T)= u(T)= 0
\end{array} \rr. \tag{D$_n$}
\end{equation}
We make two immediate observations:

\begin{itemize}

	\item Considering a solution of~\eqref{eq:dn} for any $n \in \N$, its odd and even parts are also solutions of~\eqref{eq:dn}. Hence, we only have to consider odd and even solutions.

	\item The vertical component $\phi$ of the unit normal to $A_{\beta}$ is an odd solution of (D$_0$) which does not vanish on $(0, T)$.

\end{itemize}

Let $n \in \N$. \emph{An odd solution of~\eqref{eq:dn} is proportional to $\phi$}. Otherwise, using Sturm comparison theorem with $q- n^2 \leq q$, $\phi$ should vanish once in $(0, T)$. \emph{There is no even solution to~\eqref{eq:dn}}. Suppose such a function $u$ exist. Using Sturm comparison theorem, $u$ vanishes nowhere in $(-T, T)$, which means $n^2$ is the first eigenvalue of the elliptic operator:
\[
\frac{d^2}{ds^2}+ q(s),
\]
and that the corresponding eigenspace is one dimensional equal to $\R u$. Moreover $\phi$ is an eigenfunction of this operator associated to the eigenvalue $0$, which implies $n= 0$ and $\phi= \lambda u$ for some $\lambda \in \R$. But $\phi$ is odd, which is a contradiction.
\end{proof}

\subsection{Deformations of rotational annuli}

The method of Section~\ref{sec:graph} can be strictly transposed to deform annuli ---~not necessarily rotational~--- verifying some technical conditions. We restrict ourselves to the construction of deformations of the rotational example $A_{\beta}$ for sake of clarity and since we need only these deformations to prove Theorem~\ref{thm:nonalign}; we refer to Remark~\ref{rk:deformnonrot} for the key conditions of the generalization.

\begin{lemma}

The differential of th operator $H$ at $0$ writes:
\[
\a \eta \in \Cc^{2, \alpha}(\ov{\Omega^{\beta}}), \ DH(0) \cdot \eta= \frac{1}{2} L\lp \frac{\eta}{c} \rp,
\]
where $c: [-T, T] \into \R$ is a non vanishing positive even function such that $c(\pm T)= 1/ 2$.

\end{lemma}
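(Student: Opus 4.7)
The plan is to imitate the proof of Lemma~\ref{lem:diffh}, starting from the one-parameter family $X^{t\eta}$ with $\eta \in \Cc^{2, \alpha}(\ov{\Omega^{\beta}})$ and $t \in \R$. Since $A_\beta$ has constant mean curvature, the classical first variation formula says that only the normal part of the variation vector contributes to the first variation of $H$:
\[
DH(0) \cdot \eta = \frac{1}{2} L \lan V, N^0 \ran, \qquad V = \lr \frac{d}{dt} \rb_{t= 0} X^{t\eta},
\]
where $N^0$ is the unit normal to $A_\beta$. The lemma then reduces to identifying $\lan V, N^0 \ran$ as $\eta/ c$ for some even, positive function $c$ with $c(\pm T) = 1/ 2$.

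Differentiating~\eqref{eq:cylparam} at $t = 0$, the variation vector field reads
\[
V(s, \theta) = -\chi(s) \eta(s, \theta) F\big{(} r(s), \theta \big{)} + \big{(} 1- \chi(s) \big{)} \eta(s, \theta) h_{\beta}(s) e_3,
\]
where $-\chi \eta F$ is interpreted as a radial tangent vector to $\H^2$ at $F(r(s), \theta)$. Its hyperbolic norm is $\chi |\eta| \sinh \rho_{\H}(r(s))$, using $\sinh \rho_{\H} = 2|F|/ (1- |F|^2)$ which follows from the formula for $\cosh(\rho_{\H}/ 2)$ recalled in the introduction. By rotational symmetry of $A_{\beta}$, I would decompose $N^0 = n_r(s) \hat{\rho}+ \phi(s) e_3$, with $\hat{\rho}$ the outward unit radial vector in $\H^2$ and $n_r, \phi$ depending only on $s$. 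Taking inner products yields
\[
\lan V, N^0 \ran = \eta(s, \theta) \lc -\chi(s) n_r(s) \sinh \rho_{\H}\big{(} r(s) \big{)}+ \big{(} 1- \chi(s) \big{)} h_{\beta}(s) \phi(s) \rc,
\]
and I would define $c(s)$ as the reciprocal of the bracketed expression.

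Evenness of $c$ follows from the reflection symmetry of $A_{\beta}$ across $\H^2 \times \lac 0 \rac$: the functions $r$, $\chi$, $n_r$ are even while $h_{\beta}$ and $\phi$ are odd, so both summands in the bracket are even. At $s = \pm T$ the cut-off gives $\chi = 0$, so only $h_{\beta} \phi$ remains; the graph-end identity $\phi^a h(a) = 1/ c^a$ recalled in Section~\ref{sec:graph}, together with $c^a|_{\partial \D} = 1/ 2$, yields $c(\pm T) = 1/ 2$ regardless of the precise value at infinity of the end of $A_{\beta}$.

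The main obstacle is the strict positivity of the bracket on $[-T, T]$. I would orient $N^0$ so that $\phi > 0$ on the upper sheet $s > 0$, which by continuity of $N^0$ and the structure of the profile curve of $A_{\beta}$ forces $n_r< 0$ throughout $[-T, T]$; then both summands $-\chi n_r \sinh \rho_{\H}$ and $(1- \chi) h_{\beta} \phi$ are non-negative, each strictly positive on the subinterval where the other vanishes. In particular, at the waist $s = 0$ the first term reduces to $|n_r(0)| \sinh \rho_{\H}(R_{\beta})> 0$, while at the ends only the second term contributes and is bounded below by a positive constant near $\pm T$. The subtle point is to ensure $n_r$ does not vanish on the support of $\chi$: this is built into the choice of $\chi$ (which is zero for $|s| \geq 2T/ 3$) combined with the fact that, close to the waist, the profile curve of $A_{\beta}$ has a nearly vertical tangent so that $n_r$ stays bounded away from zero.
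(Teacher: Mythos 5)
Your route is essentially the paper's: the first variation formula $DH(0)\cdot\eta=\tfrac12 L\lan V, N^0\ran$ (legitimate here precisely because $H$ is constant, as you note), differentiation of~\eqref{eq:cylparam} to get $V=\big(-\chi\eta F,\,(1-\chi)\eta h_{\beta}\big)$, and the identification $1/c=-\chi n_r\sinh\rho_{\H}+(1-\chi)h_{\beta}\phi$, which is exactly the paper's closed-form coefficient (one checks $n_r=-\frac{8r^2+(1-\beta)(1-r^2)^2}{4r(1+r^2)}$ and $n_r^2+\phi^2=1$). The evenness argument is correct, and deducing $c(\pm T)=1/2$ from $\phi^a h(a)=2/w(a)\to 2$ is a clean substitute for the paper's explicit limit $\lim_{|s|\to T}(1-r^2)|h_{\beta}(s)|=4/\sqrt{\beta}$.

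The gap is in the positivity of the bracket. Your claim that choosing $\phi>0$ on the upper sheet forces $n_r<0$ on all of $[-T,T]$ is true only in the embedded case $0<\beta<1$. For $\beta>1$ the profile is not monotone: $\bar{h}_{\beta}'$ is proportional to $\cosh t-\beta$ with $t=2\log\frac{1+r}{1-r}$, and this changes sign at $t=\argcosh\beta>|\log\beta|$, i.e.\ at an interior circle of the upper sheet where the tangent plane is horizontal and $n_r=0$. Between that circle and the waist one has $\bar{h}_{\beta}'<0$, hence $n_r>0$ with your orientation, and in fact $n_r\to+1$ at the waist. Since $\chi\equiv1$ near $s=0$ and the second summand vanishes at $s=0$ (both $h_{\beta}(0)=0$ and $1-\chi(0)=0$), your bracket equals $-\sinh\rho_{\H}(R_{\beta})<0$ there when $\beta>1$: the assertion ``both summands are non-negative, each strictly positive where the other vanishes'' fails exactly where it is needed, and the heuristic that the nearly vertical tangent near the waist keeps $n_r$ away from zero cannot repair it, since $n_r$ genuinely vanishes and changes sign on the support of $\chi$. (The same sign issue is visible in the paper's own coefficient $\chi\,\frac{8r^2+(1-\beta)(1-r^2)^2}{(1-r^2)^2}+\cdots$, whose first summand is negative at $r=R_{\beta}$ for $\beta>1$; the paper's ``we can see that $c$ is a positive even function'' is itself terse here.) As written, your argument proves the lemma only for $0<\beta<1$; for the immersed case you must either restrict $\beta$ or give a genuinely different proof of non-vanishing.
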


\begin{proof}
Let $X^{\eta_t}$ be a differentiable family in the parameter $t$ such that $\eta_0= 0$. We know that:
\[
\lr \frac{d}{dt} \rb_{t= 0} H(\eta_t)= \frac{1}{2} L \bigg{\lan} \lr \frac{d}{dt} \rb_{t= 0} X^{\eta_t}, N \bigg{\ran},
\]
where $N$ is the upward pointing unit normal to $X^0$. A computation gives:
\begin{multline*}
\bigg{\lan} \lr \frac{d}{dt} \rb_{t= 0} X^{\eta_t}, N \bigg{\ran}= \lr \frac{d\eta_t}{dt} \rb_{t= 0} \Bigg{(} \chi \frac{8r^2+ (1- \beta)(1- r^2)^2}{(1- r^2)^2} \\
+ (1- \chi) \frac{\sqrt{16\beta r^2- (1- \beta)^2(1- r^2)^2}}{4r(1+ r^2)} |h_{\beta}|(1- r^2) \Bigg{)},
\end{multline*}
which is the exact expression of $1/ c$. We can see that $c$ is a positive even function only of the variable $s$, and we have:
\[
\chi(\pm T)= 0, \quad r(\pm T)= 1 \quad \text{and} \quad \Lim{|s|}{T} \big{(} 1- r^2(s) \big{)} \big{|} h_{\beta}(s) \big{|}= \frac{4}{\sqrt{\beta}},
\]
from which we deduce $c(\pm T)= 1/ 2$.
\end{proof}

We use a similar definition to Section~\ref{sec:graph} for the compactified mean curvature operator:
\[
\ov{H}: \xi \in \Cc^{2, \alpha}(\ov{\Omega^{\beta}}) \mapsto \sqrt{|g(0)|} \lp H(2c\xi)- \frac{1}{2} \rp \in \Cc^{0, \alpha}(\ov{\Omega^{\beta}}).
\]
The compactified Jacobi operator is still $\ov{L}= D\ov{H}(0)$, $\ov{L}_0$ is its restriction to $\Cc^{2, \alpha}_0(\ov{\Omega^{\beta}})$ and $K, K^{\bot}, K_0^{\bot}$ are defined as before. The non degeneracy property of $A_{\beta}$ means $\ker \ov{L}_0= \R \phi$ with $\phi= \lan N, e_3 \ran$ the vertical component of $N$.

Again, define $\mu: (\Cc^{2, \alpha}(\S^1))^2 \into \Cc^{2, \alpha}(\ov{\Omega^{\beta}})$ to be the harmonic function on $\ov{\Omega^{\beta}}$ such that $\mu(\gamma)$ has values $\gamma$ on $\partial \Omega^{\beta}$.

\medskip

The compactified Jacobi operator satisfies a Green identity similar to Proposition~\ref{prop:greeng} for entire graphs:

\begin{proposition}[Green identity] \label{prop:greena}

For any $u, v \in \Cc^{2, \alpha}(\ov{\Omega^{\beta}})$, the compactified Jacobi operator satisfies the following identity:
\begin{multline*}
\int_{\ov{\Omega^{\beta}}} \lp u\ov{L} v- v\ov{L} u \rp d\ov{A} = \sqrt{\beta} \int_0^{2\pi} \lr \lp u\der[v]{s}- v\der[u]{s} \rp \rb_{s= T} d\theta \\
-\sqrt{\beta} \int_0^{2\pi} \lr \lp u\der[v]{s}- v\der[u]{s} \rp \rb_{s= -T} d\theta,
\end{multline*}
with $d\ov{A}$ the Lebesgue measure on $\ov{\Omega^{\beta}}$.

\end{proposition}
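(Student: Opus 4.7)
The proof I would give mirrors the proof of Proposition~\ref{prop:greeng}. Fix $u, v \in \Cc^{2, \alpha}(\ov{\Omega^{\beta}})$. For any $S \in (0, T)$, apply the classical Green identity for the (intrinsic) Jacobi operator $L$ of $A_{\beta}$ on the compact subdomain $\Omega_S= (-S, S) \times \S^1$:
\[
\int_{\Omega_S} (uLv- vLu)\, dA= \int_{\lac s= S \rac} \lp u\der[v]{\nu}- v\der[u]{\nu} \rp ds- \int_{\lac s= -S \rac} \lp u\der[v]{\nu}- v\der[u]{\nu} \rp ds,
\]
where $dA$ and $ds$ are built from the induced metric $g(0)$ and $\partial/\partial\nu$ is the outward co-normal relative to $\Omega_S$, which points along $+\partial/\partial s$ on $\lac s= S \rac$ and $-\partial/\partial s$ on $\lac s= -S \rac$, explaining the sign split.

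Next, rewrite each piece using $dA= \sqrt{|g(0)|}\, d\ov A$ and $\ov L= \sqrt{|g(0)|}\, L$ to absorb the intrinsic area element, so that the left-hand side becomes $\int_{\Omega_S} (u\ov L v- v\ov L u)\, d\ov A$. For the boundary terms, $ds= \sqrt{g_{22}(0)}\, d\theta$ and
\[
\nu= \frac{\pm 1}{\sqrt{g_{22}(0)\,|g(0)|}}\, \big{(} g_{22}(0) X^0_s- g_{12}(0) X^0_\theta \big{)}
\]
(with $+$ on $\lac s= S \rac$ and $-$ on $\lac s= -S \rac$) give
\[
\sqrt{g_{22}(0)}\, \der{\nu}= \pm \lp \frac{g_{22}(0)}{\sqrt{|g(0)|}} \der{s}- \frac{g_{12}(0)}{\sqrt{|g(0)|}} \der{\theta} \rp.
\]
Letting $S \into T$, the $\partial/\partial\theta$ contribution is expected to disappear, leaving boundary integrals proportional to $\int_0^{2\pi} (u\partial_s v- v\partial_s u)\, d\theta$ evaluated at $s= \pm T$, with the coefficient extracted from the asymptotic behavior of the metric.

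The core computation ---~and the main technical obstacle~--- is to show that
\[
\Lim{s}{\pm T} \frac{g_{22}(0)}{\sqrt{|g(0)|}}= \sqrt{\beta} \quad \text{and} \quad \Lim{s}{\pm T} \frac{g_{12}(0)}{\sqrt{|g(0)|}}= 0.
\]
Since $r(s) \into 1$ as $|s| \into T$, the entries of $g(0)$ blow up individually and one must trace the rates of blow-up. I would expand $r'(s)$ and $h_{\beta}'(s)$ near $s= \pm T$ using the definitions of $\psi$ and $\bar h_{\beta}$ together with the identity $R_{\beta}= |1- \sqrt{\beta}|/(1+ \sqrt{\beta})$ and the asymptotic development~\eqref{eq:asdev}, then plug these into the first fundamental form of~\eqref{eq:paramconfa}. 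Once these limits are in hand, the evenness of $r$ guarantees the same value at both ends, and combining the two surviving boundary pieces with the opposite signs from the outward normal orientation yields the claimed identity.
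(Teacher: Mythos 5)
Your overall strategy is the right one: the paper gives no proof of Proposition~\ref{prop:greena} and clearly intends it as the analogue of Proposition~\ref{prop:greeng}, whose proof you reproduce faithfully --- classical Green identity on the truncated domains $(-S, S) \times \S^1$, conversion of $dA$, $ds$ and the co-normal into the compactified operator and the Lebesgue measure, then $S \into T$ --- and your treatment of the two boundary components with opposite outward co-normals is correct.

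The gap sits exactly at the step you single out as ``the main technical obstacle''. The parametrization $X^0$ of \eqref{eq:paramconfa} is \emph{conformal} in $(s, \theta)$ --- that is the entire purpose of introducing $\psi$: one checks that $\sigma_{11}+ (\bar{h}_{\beta}')^2= \frac{16}{(1- r^2)^2} \cdot \frac{\sinh^2 \chi}{2\beta \cosh \chi- 1- \beta^2}$ and $16\beta r^2- (1- \beta)^2 (1- r^2)^2= (1- \beta)^2 (r^2- R_{\beta}^2)(R_{\beta}^{-2}- r^2)$, so that $\psi'= \big{(} (\sigma_{11}+ (\bar{h}_{\beta}')^2)/ \sigma_{22} \big{)}^{1/ 2}$ exactly. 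Consequently $g_{12}(0) \equiv 0$ and $g_{11}(0)= g_{22}(0)$ on all of $\Omega^{\beta}$, so $g_{22}(0)/ \sqrt{|g(0)|} \equiv 1$: there are no blow-up rates to trace, and the quantity whose limit you propose to extract is identically $1$, not $\sqrt{\beta}$. You have reverse-engineered the constant from the statement rather than derived it, and the route you describe cannot produce it; carried out honestly, your argument yields the identity with coefficient $1$ on the right-hand side. (For what it is worth, this does not damage the paper's subsequent use of the proposition: in \eqref{eq:tangenta} the factor $\sqrt{\beta}$ is paired with the normalization $\der[\phi]{s} |_{s= \pm T}= -1$, whereas a direct computation of $\phi= 1/ W$ gives $\der[\phi]{s} |_{s= \pm T}= -\beta$; only the product of the two constants enters, so the conclusion of Proposition~\ref{prop:locala} is unaffected. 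But your write-up must either locate where $\sqrt{\beta}$ genuinely enters or acknowledge that in these conformal coordinates the natural constant is $1$.)
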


\nidt And we also have the equivalent of Corollary~\ref{cor:nosolg}:

\begin{corollary} \label{cor:nosola}

There is no solution $u \in \Cc^{2, \alpha}(\ov{\Omega^{\beta}})$ to the equation:
\[
\lac \begin{array}{ll}
\ov{L} u= 0 & \text{ on } \ov{\Omega^{\beta}} \\
u|_{\partial \Omega^{\beta}}= (1, -1)
\end{array} \rr.
\]

\end{corollary}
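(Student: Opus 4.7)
The strategy is to mirror the proof of Corollary~\ref{cor:nosolg} almost verbatim, now using the Green identity of Proposition~\ref{prop:greena} together with the vertical component $\phi = \langle N, e_3 \rangle$ of the unit normal to $A_\beta$. The function $\phi$ is a Jacobi field (hence $\ov L \phi = 0$), and since the two ends of $A_\beta$ are vertical, $N$ becomes horizontal at infinity, so $\phi$ vanishes on $\partial \Omega^\beta$; moreover, $A_\beta$ being symmetric with respect to $\H^2 \times \{0\}$ forces $\phi$ to be odd in $s$, so that $\partial_s \phi$ is even in $s$ and, by rotational invariance, independent of $\theta$ at $s = \pm T$.

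Assume for contradiction that such a $u$ exists. Plugging $u$ and $\phi$ into Proposition~\ref{prop:greena} and using $\ov L u = \ov L \phi = 0$ together with $\phi|_{s = \pm T} = 0$, all four boundary integrands collapse to terms involving only $u \cdot \partial_s \phi$:
\[
0 = \sqrt{\beta}\int_0^{2\pi}\!\! u(T, \theta)\,\partial_s\phi(T)\,d\theta \;-\; \sqrt{\beta}\int_0^{2\pi}\!\! u(-T, \theta)\,\partial_s\phi(-T)\,d\theta.
\]
Substituting $u(T, \cdot) = 1$, $u(-T, \cdot) = -1$ and the identity $\partial_s\phi(T) = \partial_s\phi(-T)$ from the evenness of $\partial_s \phi$, this simplifies to $4\pi\sqrt{\beta}\,\partial_s\phi(T) = 0$, hence $\partial_s\phi(T) = 0$.

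To obtain the contradiction, I would observe, following the Fourier decomposition used in the proof of Proposition~\ref{prop:nondegen}, that the rotationally invariant function $\phi = \phi(s)$ satisfies the second-order linear ODE $\phi'' + q(s)\phi = 0$ on $[-T, T]$. The two conditions $\phi(T) = 0$ and $\phi'(T) = 0$ then force $\phi \equiv 0$ by uniqueness of Cauchy data, which is impossible since $\phi$ does not vanish on $(0, T)$ (it is, up to sign, the odd Jacobi function singled out in Proposition~\ref{prop:nondegen}).

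The only delicate point is really the first paragraph, namely justifying that $\phi$ extends $\Cc^{2,\alpha}$-smoothly up to $\partial \Omega^\beta$ with $\phi|_{s = \pm T} = 0$; this is the geometric content behind the compactification and is consistent with the asymptotic analysis already carried out (the ends being vertical). Everything else is a routine transcription of the entire-graph argument with boundary $\partial \D$ replaced by the two circles $\{s = \pm T\}$, and the sign change produced by $u|_{s = -T} = -1$ fortuitously adds, rather than cancels, the two boundary contributions.
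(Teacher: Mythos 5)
Your proof is correct and follows the route the paper intends (it introduces Corollary~\ref{cor:nosola} as ``the equivalent of Corollary~\ref{cor:nosolg}'', i.e.\ the Green identity of Proposition~\ref{prop:greena} applied to $u$ and the Jacobi function $\phi$, with $\phi|_{s=\pm T}=0$ killing the $\phi\,\partial_s u$ terms and the sign $u|_{s=-T}=-1$ making the two boundary contributions add). The only cosmetic difference is the final step: the paper records the explicit value $\partial_s\phi|_{s=\pm T}=-1$ (see the proof of Proposition~\ref{prop:locala}) and would read off the contradiction $0=-4\pi\sqrt{\beta}$ directly, whereas you deduce $\partial_s\phi(T)=0$ and then contradict Cauchy uniqueness for the ODE $\phi''+q\phi=0$ satisfied by the rotationally invariant, odd, not identically vanishing function $\phi$; both arguments are valid.
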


As in Section~\ref{subsec:gendef}, let $\Pi_K$ and $\Pi_{K^{\bot}}$ be the orthogonal projections on $K$ and $K^{\bot}$. Lemma~\ref{lem:isomg} still holds:

\begin{lemma}

Consider the map $\Phi: (\Cc^{2, \alpha}(\S^1))^2 \times \R \times K_0^{\bot} \into K^{\bot}$ defined by:
\[
\Phi(\gamma, \lambda, \sigma)= \Pi_{K^{\bot}} \circ \ov{H} \big{(} \mu(\gamma)+ \lambda \phi+ \sigma \big{)}.
\]
Then $D_3 \Phi(0, 0, 0): K_0^{\bot} \into K^{\bot}$ is an isomorphism.
\end{lemma}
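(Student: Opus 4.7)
The plan is to mirror, line-for-line, the argument used in the entire-graph case (Lemma~\ref{lem:isomg}), replacing the ingredients by their annulus analogues established in the previous subsection. First I would compute $D_3 \Phi(0, 0, 0)$ directly. At the base point $(0, 0, 0)$ the argument of $\ov{H}$ is $\mu(0)+ 0 \cdot \phi+ 0= 0$, and since $A_{\beta}$ has mean curvature $1/ 2$ we have $\ov{H}(0)= 0$. Differentiating in the third variable then gives
\[
D_3 \Phi(0, 0, 0) \cdot \sigma= \Pi_{K^{\bot}} \circ D\ov{H}(0) \cdot \sigma= \Pi_{K^{\bot}} \circ \ov{L}(\sigma) \quad \a \sigma \in K_0^{\bot}.
\]
Since $K_0^{\bot} \subset \Cc^{2, \alpha}_0(\ov{\Omega^{\beta}})$, one has $\ov{L}(\sigma)= \ov{L}_0(\sigma)$ in this expression.

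The second step is to invoke the relevant Fredholm theory for $\ov{L}_0$ on $\Cc^{2, \alpha}_0(\ov{\Omega^{\beta}})$, exactly as in the graph case. The compactified Jacobi operator is a Fredholm operator of index zero, so its range is the $L^2(\Omega^{\beta})$-orthogonal complement of its kernel. The non degeneracy result (Proposition~\ref{prop:nondegen}) combined with Remark~\ref{rk:nondegen} identifies this kernel as $K= \R \phi$, so the range of $\ov{L}_0$ is precisely $K^{\bot}$. In particular, for any $\sigma \in K_0^{\bot}$ the element $\ov{L}_0(\sigma)$ already lies in $K^{\bot}$, so the projection $\Pi_{K^{\bot}}$ acts as the identity and we conclude
\[
D_3 \Phi(0, 0, 0)= \ov{L}_0|_{K_0^{\bot}} : K_0^{\bot} \into K^{\bot}.
\]

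Finally I would check that $\ov{L}_0|_{K_0^{\bot}}$ is a bijection onto $K^{\bot}$. Injectivity is immediate: if $\ov{L}_0(\sigma)= 0$ for some $\sigma \in K_0^{\bot}$, then $\sigma \in K \cap K_0^{\bot}= \lac 0 \rac$, because a nonzero element of $K= \R \phi$ has nonzero $L^2$-norm and hence cannot sit in the $L^2$-orthogonal complement of $K$. Surjectivity onto $K^{\bot}$ follows from the description of the range above: given $v \in K^{\bot}$ pick a preimage $\sigma_0 \in \Cc^{2, \alpha}_0(\ov{\Omega^{\beta}})$ with $\ov{L}_0(\sigma_0)= v$, write $\sigma_0= \sigma+ \lambda \phi$ with $\sigma \in K_0^{\bot}$ and $\lambda \in \R$, and note that $\ov{L}_0(\sigma)= \ov{L}_0(\sigma_0)= v$ since $\phi \in \ker \ov{L}_0$.

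The only non-automatic ingredient is the one-dimensionality of $\ker \ov{L}_0$; everything else is general Fredholm theory for elliptic operators on a compact annular domain with boundary. This is precisely the reason Proposition~\ref{prop:nondegen} was proved, and once it is in hand the present lemma is a direct transcription of Lemma~\ref{lem:isomg}. I therefore expect no genuine obstacle beyond bookkeeping.
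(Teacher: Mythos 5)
Your proof is correct and follows exactly the route the paper intends: the paper states this annulus lemma without proof, remarking only that Lemma~\ref{lem:isomg} ``still holds,'' and its proof of that graph-case lemma is precisely your computation $D_3\Phi(0,0,0)=\Pi_{K^{\bot}}\circ\ov{L}_0|_{K_0^{\bot}}$ combined with the index-zero Fredholm property and the identification $K=\R\phi$ coming from the non degeneracy of $A_{\beta}$ (Proposition~\ref{prop:nondegen}). Your extra injectivity/surjectivity bookkeeping just fills in details the paper leaves implicit.
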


We can apply again the implicit function theorem to $\Phi$, which states that there exist an open neighborhood $U$ of $(0, 0)$ in $(\Cc^{2, \alpha}(\S^1))^2 \times \R$ and a unique smooth map $\sigma: U \into K_0^{\bot}$ such that:
\[
\a (\gamma, \lambda) \in U, \ \Phi \big{(} \gamma, \lambda, \sigma(\gamma, \lambda) \big{)}= 0.
\]
We define similarly the smooth maps $\xi_0: U \into \Cc^{2, \alpha}(\ov{\Omega^{\beta}})$, $\eta_0: U \into \Cc^{2, \alpha}(\ov{\Omega^{\beta}})$ and $\kappa_0: U \into K$ by:
\begin{gather*}
\xi_0(\gamma, \lambda)= \mu(\gamma)+ \lambda \phi+ \sigma(\gamma, \lambda), \quad \eta_0(\gamma, \lambda)= 2c\xi_0(\gamma, \lambda) \\
\text{and} \quad \kappa_0(\gamma, \lambda)= \Pi_K \circ \ov{H} \big{(} \xi_0(\gamma, \lambda) \big{)}.
\end{gather*}
Also, if an annulus, defined on $\Omega^{\beta}$, admits $X^{\eta_0(\gamma, \lambda)}$ as a parametrization, we say that $\lac \gamma, \lambda \rac$ are the \emph{data} of the annulus with respect to $A_{\beta}$.

\medskip

Properties of $\xi_0$ and $\eta_0$ are similar to those of $\xi_a$ and $\eta_a$ in Section~\ref{subsec:gendef}:

\begin{lemma} \label{lem:etaa}

The maps $\eta_0$ and $\xi_0$ have the following properties:

\begin{enumerate}

	\item $\xi_0(0, 0)= 0$ and $\eta_0(0, 0)= 0$.

	\item $\a (\gamma, \lambda) \in U, \ \eta_0(\gamma, \lambda)|_{\partial \Omega^{\beta}}= \gamma$.

	\item $D_2  \xi_0(0, 0): \lambda \in \R \mapsto \lambda \phi \in \Cc^{2, \alpha}(\ov{\Omega^{\beta}})$.

\end{enumerate}

\end{lemma}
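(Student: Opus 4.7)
The plan is to imitate the proof of Lemma~\ref{lem:etag} point by point, since the implicit function theorem setup here is structurally identical to the one in Section~\ref{subsec:gendef}; the only real differences are that the base point is now $(0,0)$ rather than $(\gamma^a, 0)$, and the boundary of the parameter domain is $\partial\Omega^\beta = \{s = \pm T\} \times \S^1$ instead of $\partial \D$.

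For Point~1, I would observe that $\mu(0) = 0$ (the harmonic extension of the zero boundary datum), and that $\ov{H}(0) = 0$ since $A_\beta$ has mean curvature $1/2$ by construction. Hence $\Phi(0, 0, 0) = \Pi_{K^\bot} \circ \ov{H}(0) = 0$, so by uniqueness in the implicit function theorem $\sigma(0, 0) = 0$, from which $\xi_0(0,0) = 0$ and then $\eta_0(0,0) = 2c \cdot 0 = 0$.

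For Point~2, I would compute directly on $\partial \Omega^\beta$ using three facts: $c(\pm T) = 1/2$ (established in the preceding lemma), $\sigma(\gamma, \lambda) \in K_0^\bot \subset \Cc^{2,\alpha}_0(\ov{\Omega^\beta})$ so it vanishes on the boundary, and $\phi|_{\partial \Omega^\beta} = 0$ because on the ends of the rotational annulus $A_\beta$ the unit normal becomes asymptotically horizontal. This gives
\[
\eta_0(\gamma, \lambda)|_{\partial \Omega^\beta} = 2c|_{\partial \Omega^\beta}\big(\mu(\gamma)|_{\partial \Omega^\beta} + \lambda \phi|_{\partial \Omega^\beta} + \sigma(\gamma,\lambda)|_{\partial \Omega^\beta}\big) = 2 \cdot \tfrac{1}{2} \cdot \gamma = \gamma.
\]

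For Point~3, as in the original proof, it suffices to show $D_2 \sigma(0, 0) = 0$. I would differentiate the identity $\Phi(0, t, \sigma(0, t)) = 0$ at $t=0$: by the chain rule this gives
\[
0 = \Pi_{K^\bot} \circ \ov{L}\big(\phi + D_2 \sigma(0, 0) \cdot 1\big) = \Pi_{K^\bot} \circ \ov{L}_0\big(D_2 \sigma(0, 0) \cdot 1\big),
\]
where $\ov{L}(\phi) = 0$ because $\phi \in K$ (using non-degeneracy of $A_\beta$, Proposition~\ref{prop:nondegen}). Since $D_2\sigma(0,0)\cdot 1 \in K_0^\bot$ and $\ov{L}_0|_{K_0^\bot}$ maps isomorphically onto $K^\bot$, the element $D_2\sigma(0,0)\cdot 1$ lies in $K \cap K_0^\bot = \{0\}$, which yields the claim. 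No step here presents a genuine obstacle; the only point requiring care is confirming that $\phi = \langle N, e_3\rangle$ vanishes on $\partial \Omega^\beta$, which follows from the asymptotic behavior of $A_\beta$ encoded by the development~\eqref{eq:asdev} and the fact that the ends are vertical.
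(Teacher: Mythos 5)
Your proposal is correct and follows exactly the route the paper intends: the paper states Lemma~\ref{lem:etaa} without proof precisely because it is the verbatim transposition of the proof of Lemma~\ref{lem:etag}, which is what you carry out (uniqueness in the implicit function theorem for Point~1, the boundary values $c(\pm T)=1/2$, $\phi|_{\partial\Omega^{\beta}}=0$, $\sigma\in\Cc^{2,\alpha}_0(\ov{\Omega^{\beta}})$ for Point~2, and $D_2\sigma(0,0)\cdot 1\in K\cap K_0^{\bot}=\lac 0\rac$ for Point~3). All the ingredients you invoke are established in the paper ($c(\pm T)=1/2$ in the preceding lemma, $K=\R\phi$ from non-degeneracy, $\phi|_{s=\pm T}=0$ as used in Proposition~\ref{prop:locala}), so there is nothing to add.
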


\begin{remark} \label{rk:deformnonrot}

With few more technical details, the method adapts to deform more general annuli that we call \emph{$\beta$-deformable}; a CMC-$1/ 2$ annulus $A$ being $\beta$-deformable when:

\begin{itemize}

	\item $A$ admits a cylindrical parametrization $X^b$ as in \eqref{eq:paramconfa}, with $b \in \Cc^{2, \alpha}(\ov{\Omega^{\beta}})$;

	\item $A$ is non degenerate;

	\item the values at infinity are the couple $(\gamma^b_+, \gamma^b_-)= b|_{\partial \Omega^{\beta}}$ satisfying the condition:
	\[
	|e^{-\gamma^b_+}|_{L^2(\S^1)}= |e^{-\gamma^b_-}|_{L^2(\S^1)},
	\]
	which expresses the conservation of the vertical flux along the annulus.

\end{itemize}

Such annuli can be found for instance among deformations of the rotational example $A_{\beta}$. Indeed, the only hypothesis which is not guaranteed by the very construction of the deformations is the non degeneracy, but since it is a local property (see Remark~\ref{rk:nondegen}) small deformations are still non degenerate.

\end{remark}

Lemma~\ref{lem:etaa} Point~2 shows that the values at infinity are still independent from the parameter $\lambda$, and the meaning of the parameter $\lambda$ is the same as in the case of entire graphs:

\begin{proposition} \label{prop:lambdaa}

Let $(\gamma, \lambda) \in U$. The surface $X^{\eta_0(\gamma, \lambda')}$ exists for any $\lambda'$ close to $\lambda$ and coincides with $X^{\eta_0(\gamma, \lambda)}$ up to a vertical translation.

\end{proposition}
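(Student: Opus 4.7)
The plan is to mirror the argument of Proposition~\ref{prop:lambdag}. Given $(\gamma, \lambda) \in U$ and $\wt{\eta} := \eta_0(\gamma, \lambda)$, I would: (i) reparametrize the vertical translate $T^t(X^{\wt{\eta}})$ for $|t|$ small in the form $X^{b(t)}$ with $b(t) \in \Cc^{2, \alpha}(\overline{\Omega^\beta})$ smooth in $t$, $b(0) = \wt{\eta}$, and $b(t)|_{\partial \Omega^\beta} = \gamma$; (ii) identify $b(t) = \eta_0(\gamma, \Lambda(t))$ for a unique smooth $\Lambda(t)$ with $\Lambda(0) = \lambda$, using uniqueness in the implicit function theorem defining $\eta_0$; (iii) show that $\Lambda$ is a local diffeomorphism at $0$, so every $\lambda'$ close to $\lambda$ is attained as $\Lambda(t)$ for some small $t$, giving $X^{\eta_0(\gamma, \lambda')} = T^t(X^{\wt{\eta}})$.

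On the end collars $\{|s| \geq 2T/3\}$ where $\chi \equiv 0$, the parametrization~\eqref{eq:cylparam} is purely vertical and Proposition~\ref{prop:invisom}-type reasoning yields, in the same $(s, \theta)$-coordinates,
\[
b(t)(s, \theta) = \wt{\eta}(s, \theta) + \log \lp 1 + t\,\frac{e^{-\wt{\eta}(s, \theta)}}{h_\beta(s)} \rp
\]
for $|t|$ small; since $h_\beta \to \pm \infty$ at $s = \pm T$, this forces $b(t)|_{\partial \Omega^\beta} = \gamma$ for all such $t$. In the central region $\{|s| \le T/3\}$ where $\chi \equiv 1$, the parametrization~\eqref{eq:cylparam} involves the $\H^2$-component, so a vertical translation cannot be absorbed in the weight $\eta$ and actually requires a reparametrization of the conformal coordinate $s$. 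Still, $T^t(X^{\wt{\eta}})$ is a CMC-$1/2$ annulus close to $X^{\wt{\eta}}$, and an implicit function theorem argument around $A_\beta$ (using Remark~\ref{rk:nondegen} to transport the setup of Section~\ref{subsec:gendef}) should produce the required global $b(t)$, agreeing with the collar formula above; uniqueness then yields $b(t) = \eta_0(\gamma, \Lambda(t))$.

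To conclude (iii), differentiating $b(t) = \eta_0(\gamma, \Lambda(t))$ at $t = 0$ gives
\[
\partial_t b(0) = D_2 \eta_0(\gamma, \lambda) \cdot \Lambda'(0).
\]
Evaluated at an end-collar point where $\phi$ is non-zero, the left-hand side equals $e^{-\wt{\eta}}/h_\beta \neq 0$ by the explicit formula; on the other hand, $D_2 \eta_0(\gamma, \lambda) = 2c\, D_2 \xi_0(\gamma, \lambda)$, which by Lemma~\ref{lem:etaa} Point~3 and continuity is close to $2c\phi$ and thus non-zero near the base point. Hence $\Lambda'(0) \neq 0$, and the inverse function theorem gives the local bijectivity of $\Lambda$. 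The main obstacle is step (i) in the central region: in the entire graph case it came for free from Proposition~\ref{prop:invisom}, but here the rigidity of~\eqref{eq:cylparam} (tied to the fixed $r$, $h_\beta$, $\chi$) forces a non-explicit conformal reparametrization, which has to be handled through the implicit function theorem machinery of Section~\ref{subsec:gendef} transposed to the annulus setting.
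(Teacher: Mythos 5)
The paper gives no proof of this proposition: it is stated as the direct analogue of Proposition~\ref{prop:lambdag}, with only a remark explaining why $\lambda'$ must stay near $\lambda$ (the immersion condition). So your reconstruction is really measured against the graph-case proof it transposes. Your outline is the right one, and you correctly isolate the new feature. On the collars $\{\chi\equiv 0\}$ the translate is absorbed exactly as in Proposition~\ref{prop:invisom}; your formula for $b(t)$ there and the conclusion $b(t)|_{\partial \Omega^{\beta}}= \gamma$ are correct. Where $\chi> 0$ the parametrization~\eqref{eq:cylparam} is not purely vertical, and re-expressing $T^t(X^{\wt{\eta}})$ in that form amounts to solving, at each $(s, \theta)$, for the value $u$ such that $X^u(s, \theta)$ lies on the translated surface; this is a pointwise transversality statement (the curves $u \mapsto X^u(s, \theta)$ meet the surface transversally because $\lan \partial_u X^u, N \ran= 1/ c> 0$), not the Fredholm machinery of Section~\ref{subsec:gendef}, but your instinct that an implicit function theorem is needed is sound.

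The genuine gap is in your step (ii). Writing $b(t)= 2c\big{(} \mu(\gamma)+ \Lambda(t) \phi+ \sigma'(t) \big{)}$, the identity $b(t)= \eta_0(\gamma, \Lambda(t))$ follows from uniqueness in the implicit function theorem \emph{only after} you check $\Phi \big{(} \gamma, \Lambda(t), \sigma'(t) \big{)}= \Pi_{K^{\bot}} \circ \ov{H} \big{(} b(t)/ (2c) \big{)}= 0$. In the graph case this is automatic: the translate is reparametrized by the identity in $(r, \theta)$, so $H(a'(t))= H(\wt{a})$ pointwise and $\ov{H}\big{(} (a'(t)- a)/ (2c^a) \big{)}= \kappa_a(\gamma, \lambda) \in K$. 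Here the change of parameters $\Psi_t$ necessarily shifts $s$ in the central region (there the height at parameter $s$ is forced to be $h_{\beta}(s)$), so $H(b(t))= H(\wt{\eta}) \circ \Psi_t$ and $\ov{H}\big{(} b(t)/ (2c) \big{)}= \sqrt{|g(0)|} \cdot \big{(} (\kappa_0(\gamma, \lambda)/ \sqrt{|g(0)|}) \circ \Psi_t \big{)}$. This lies in $K= \R \phi$ only when $\kappa_0(\gamma, \lambda)= 0$: for instance $\phi$ vanishes at the neck $s= 0$ while $\phi \circ \Psi_t$ does not, so for $\kappa_0(\gamma, \lambda) \neq 0$ the right-hand side is not proportional to $\phi$ and $b(t)$ is \emph{not} of the form $\eta_0(\gamma, \cdot)$. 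Your uniqueness step therefore goes through exactly when $H(\wt{\eta}) \equiv 1/ 2$ --- which is the only case the paper actually uses, to obtain $\Uu= \Gamma \times I_{\beta}$ --- but not for arbitrary $(\gamma, \lambda) \in U$ as the statement and your argument claim. You should either restrict to $\kappa_0(\gamma, \lambda)= 0$ and say so explicitly, or supply an argument for the general case; as written, ``uniqueness then yields $b(t)= \eta_0(\gamma, \Lambda(t))$'' is unjustified.
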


The reason why now $\lambda'$ is restricted to a neighborhood of $\lambda$ is that the map $X^{\eta_0(\gamma, \lambda')}$ is an immersion only for $\eta_0(\gamma, \lambda')$ small enough in the $\Cc^{2, \alpha}$-topology. 

\medskip

We are now interested in deformations $X^{\eta_0(\gamma, \lambda)}$ of the annulus $A_{\beta}$ that are CMC-$1/ 2$, which means deformations such that $\kappa_0(\gamma, \lambda)= 0$. Consider $\Uu= \kappa_0^{-1}(\lac 0 \rac) \cap U$. Using Proposition~\ref{prop:lambdaa}, we can take $\Uu= \Gamma \times I_{\beta}$ with $\Gamma$ a connected subset of $(\Cc^{2, \alpha}(\S^1))^2$ and $I_{\beta}$ an open interval.

\begin{proposition} \label{prop:locala}

The set $\Gamma$ is a codimension $1$ smooth submanifold of $(\Cc^{2, \alpha}(\S^1))^2$ which is a subset of:
\[
\lac (\gamma_+, \gamma_-) \in (\Cc^{2, \alpha}(\S^1))^2 \lb |e^{-\gamma_+}|_{L^2(\S^1)}= |e^{-\gamma_-}|_{L^2(\S^1)} \rr \rac.
\]

\end{proposition}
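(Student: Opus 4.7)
The plan is to mimic the proof of Proposition~\ref{prop:localg} in two independent steps: first, showing that $\kappa_0$ is a submersion at $(0, 0)$ to get the codimension $1$ smooth submanifold structure; second, deriving the flux inclusion by comparing, on a deformed annulus $\Sigma$ in the family, the ambient Stokes formula for the Killing field $e_3$ with the intrinsic Stokes formula coming from the CMC-$1/2$ equation, both applied between two far-away horizontal slices.

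For the submersion step, the identity $\ov{H}(\xi_0(\gamma, \lambda)) = \kappa_0(\gamma, \lambda)$, which holds exactly as in~\eqref{eq:hkappa} since $\Phi(\gamma, \lambda, \sigma(\gamma, \lambda)) = 0$ by construction, gives after differentiation $D\kappa_0(0, 0) = \ov{L} \circ D\xi_0(0, 0)$. By Lemma~\ref{lem:etaa} Point~3, $D_2 \xi_0(0, 0) \cdot 1 = \phi \in K$, so $D_2 \kappa_0(0, 0) = 0$. Differentiating the boundary identity of Lemma~\ref{lem:etaa} Point~2 in the antisymmetric direction $\gamma = (1, -1) \in (\Cc^{2, \alpha}(\S^1))^2$ gives $v := D_1 \xi_0(0, 0) \cdot (1, -1)$ with $v|_{\partial \Omega^{\beta}} = (1, -1)$; so if $\ov{L} v$ vanished, $v$ would solve the boundary value problem forbidden by Corollary~\ref{cor:nosola}, a contradiction. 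Hence $D_1 \kappa_0(0, 0) \cdot (1, -1) \neq 0$ and $\kappa_0$ is a submersion near $(0, 0)$; up to restricting $\Gamma$, the implicit function theorem then provides the codimension $1$ smooth submanifold structure.

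For the flux inclusion, fix a deformed annulus $\Sigma = X^{\eta_0(\gamma, \lambda)}$ with $(\gamma, \lambda) \in \Gamma \times I_\beta$ and values at infinity $(\gamma_+, \gamma_-)$. For $t > 0$ large, each horizontal slice $\H^2 \times \lac \pm t \rac$ cuts $\Sigma$ transversely, on the corresponding end, along a single curve $\partial \Sigma_t^\pm$ close to a hyperbolic circle of euclidean radius $|F(R_\pm(t), \cdot)|$ for some $R_\pm(t) \to 1$ as $t \to +\ii$. In the embedded case, the truncated surface $\Sigma_t = \Sigma \cap \lac |x_3| \leq t \rac$ together with the two horizontal caps $D_t^\pm$, hyperbolic disks of area $16\pi R_\pm(t)^2/(1 - R_\pm(t)^2)^2$, bounds a region $V_t \subset \H^2 \times \R$, and $\int_{V_t}\Div e_3 = 0$ expresses $\int_{\Sigma_t}\lan e_3, N \ran$ as a signed combination of the two cap areas. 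On the other hand, the CMC-$1/2$ equation $\Delta_\Sigma x_3 = \lan e_3, N \ran$ and Stokes on $\Sigma_t$ rewrite the same integral as $\int_{\partial \Sigma_t^+} \partial_\nu x_3 \, dl + \int_{\partial \Sigma_t^-} \partial_\nu x_3 \, dl$; each end contribution is evaluated in its graph coordinates at infinity $X^{a^\pm}$ by the asymptotic computation performed at the end of the proof of Proposition~\ref{prop:localg}, and yields, up to the sign prescribed by the outward conormal on that end, the value $\mathrm{Area}(D_t^\pm) + 2\pi \bigl(1 - |e^{-\gamma_\pm}|_{L^2(\S^1)}\bigr) + O(1 - R_\pm(t)^2)$. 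Equating the two expressions and letting $t \to +\ii$ causes the divergent area contributions to cancel pairwise, leaving $|e^{-\gamma_+}|_{L^2(\S^1)} = |e^{-\gamma_-}|_{L^2(\S^1)}$.

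The sign bookkeeping in the flux step is the main technical obstacle: the convention $\Delta_\Sigma X = N$ (i.e.\ $H = 1/2$ with respect to $N$) pins down $N$ as pointing into the inner region of the hourglass $A_\beta$, opposite to the outward normal from $V_t$ appearing in the ambient Stokes formula; the two outward conormals on the upper and lower boundary circles likewise differ in their $x_3$-orientation, and together these signs arrange exactly so that the leading divergent terms cancel. For the immersed case $\beta > 1$, the direct divergence argument is replaced by the homology invariance of the vertical flux of $e_3$ across cycles on a CMC-$1/2$ surface, which reduces the conclusion to the same asymptotic computation taken separately on each end.
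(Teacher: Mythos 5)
Your submersion step is exactly the paper's: $D_2 \kappa_0(0, 0) \cdot 1= \ov{L}_0(\phi)= 0$ and $D_1 \kappa_0(0, 0) \cdot (1, -1) \neq 0$ via Corollary~\ref{cor:nosola}. For the inclusion, however, you take a genuinely different route. The paper never computes a flux directly: it applies the Green identity of Proposition~\ref{prop:greena} to $\phi$ and the Jacobi field $v= D_1 \xi_0(0, 0) \cdot (\dot{\gamma}_0, 0)$ generated by a path in $\Gamma$, which only yields the \emph{tangency} of $\Gamma$ to the hypersurface $\lac |e^{-\gamma_+}|_{L^2(\S^1)}= |e^{-\gamma_-}|_{L^2(\S^1)} \rac$ at $A_{\beta}$; it then re-bases the construction at each point $\gamma_t$ of the path (legitimate because small deformations remain $\beta$-deformable, Remark~\ref{rk:deformnonrot}) so the tangency relation~\eqref{eq:tangenta} holds for every $t$, and integrates. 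Your Stokes argument instead proves the identity directly on each annulus, is geometrically more transparent (it is literally the ``conservation of the vertical flux'' the paper alludes to right after the proposition), applies to any two-ended annulus in $\Ee$ whether or not it lies in the constructed family, and avoids both the path integration and Remark~\ref{rk:deformnonrot}; the price is the asymptotic boundary computation (which the paper only performs for entire graphs in Proposition~\ref{prop:localg}) and a separate treatment of the immersed case $\beta> 1$.

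One step does not close as written. You truncate at the horizontal slices $\lac x_3= \pm t \rac$, but the asymptotic evaluation you borrow from the end of the proof of Proposition~\ref{prop:localg} is an integral over the \emph{coordinate} circles $\lac r= R \rac$, and the regions cut out by $\Sigma \cap \lac x_3= \pm t \rac$ in the slices are not round disks, so their areas are not $16\pi R^2/ (1- R^2)^2$. Since $\eta$ is not radial, the radial offset between the level curve of the height function and the nearby coordinate circle is of order $1- R$, which shifts both the conormal integral and the cap area by quantities of order $(1- R)^{-2}$ --- the same order as the leading divergent term --- so the claimed ``pairwise cancellation'' is not justified by the formulas you quote. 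The standard repair is to truncate at coordinate circles $\lac s= \pm S \rac$ and cap each with a horizontal disk at fixed height together with a piece of vertical cylinder, which contributes nothing to the flux of $e_3$; then the computation of Proposition~\ref{prop:localg} applies verbatim to each end and the divergent terms do cancel. With that adjustment, and the homological (algebraic) version of the flux for the immersed case, your argument is correct.
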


\begin{proof}
As in Proposition~\ref{prop:localg}, if $\kappa_0$ is a submersion at $(0, 0)$, then it is a submersion in a neighborhood of $(0, 0)$ in $(\Cc^{2, \alpha}(\S^1))^2 \times \R$ and, up to a restriction, $\Gamma$ is a smooth submanifold of $(\Cc^{2, \alpha}(\S^1))^2$ of codimension $1$. Again $D_2 \kappa_0(0, 0)= 0$ since:
\begin{align*}
D_2 \kappa_0(0, 0) \cdot 1 & = \lr \frac{d}{dt} \rb_{t= 0} \kappa_0(0, t)= \lr \frac{d}{dt} \rb_{t= 0} \ov{H} \big{(} \xi_0(0, t) \big{)} \\
                           & = \ov{L} \big{(} D_2 \xi_0(0, 0) \cdot 1 \big{)}= \ov{L}_0(\phi)= 0,
\end{align*}
with $\phi \in K$. Consider $\gamma= (1, -1) \in (\Cc^{2, \alpha}(\S^1))^2$ and compute:
\begin{align*}
D_1 \kappa_0(0, 0) \cdot \gamma & = \lr \frac{d}{dt} \rb_{t= 0} \kappa_0(t\gamma, 0)= \lr \frac{d}{dt} \rb_{t= 0} \ov{H} \big{(} \xi_0(t\gamma, 0) \big{)} \\
                                & = \ov{L} \big{(} D_1 \xi_0(0, 0) \cdot \gamma \big{)}.
\end{align*}
Lemma~\ref{lem:etaa} Point~2 implies $\big{(} D_1 \xi_0(0, 0) \cdot \gamma \big{)}|_{\partial \D}= (1, -1)$ and using Corollary~\ref{cor:nosola}, we know that $D_1 \kappa_0(0, 0) \cdot (1, -1)$ is not identically zero.

Consider a smooth path $\gamma_t= ((\gamma_+)_t, (\gamma_-)_t)$ in $\Gamma$ with $\gamma_0= 0$ and tangent vector at $t$ $\dot{\gamma}_t= (\dot{(\gamma_+)}_t, \dot{(\gamma_-)}_t)$. Note that similarly:
\[
0= D\kappa_0(0, 0) \cdot (\dot{\gamma}_0, 0)= \lr \frac{d}{dt} \rb_{t= 0} \kappa_0(\gamma_t, 0)= \ov{L} \big{(} D_1 \xi_0(0, 0) \cdot (\dot{\gamma}_0, 0) \big{)}.
\]
Denote $v= D_1 \xi_0(0, 0) \cdot (\dot{\gamma}_0, 0) \in \ker \ov{L}$. Knowing that:
\begin{gather*}
\phi|_{s= T}= \phi|_{s= -T}= 0, \quad \lr \der[\phi]{s} \rb_{s= T}= \lr \der[\phi]{s} \rb_{s= -T}= -1, \\
v|_{s= T}= \dot{(\gamma_+)}_0 \quad \text{and} \quad v|_{s= -T}= \dot{(\gamma_-)}_0,
\end{gather*}
apply Proposition~\ref{prop:greena} to $\phi$ and $v$:
\begin{align} \label{eq:tangenta}
0 & = \int_{\ov{\Omega^{\beta}}} (\phi \ov{L} v- v\ov{L} \phi) d\ov{A}= \sqrt{\beta} \int_0^{2\pi} \dot{(\gamma_+)}_0 d\theta- \sqrt{\beta} \int_0^{2\pi} \dot{(\gamma_-)}_0 d\theta \nonumber \\
  & = 2\pi \sqrt{\beta} \lp \Big{\lan} \dot{(\gamma_+)}_0, e^{-2(\gamma_+)_0} \Big{\ran}_{L^2(\S^1)}- \Big{\lan} \dot{(\gamma_-)}_0, e^{-2(\gamma_-)_0} \Big{\ran}_{L^2(\S^1)} \rp.
\end{align}

It remains to show that equality~\eqref{eq:tangenta} is indeed true for any $t$, therefore integrating it with respect to $t$, we would obtain:
\[
|e^{-(\gamma_+)_t}|_{L^2(\S^1)}^2- |e^{-(\gamma_-)_t}|_{L^2(\S^1)}^2= |e^{-(\gamma_+)_0}|_{L^2(\S^1)}^2- |e^{-(\gamma_-)_0}|_{L^2(\S^1)}^2= 1- 1= 0.
\]
For a fixed $t$, we consider the reparametrized path $\gamma'_s= \gamma_{s+ t}$ and denote $b= \eta_0(\gamma_t, 0)$. The immersion $X^b$ satisfies the conditions of Remark~\ref{rk:deformnonrot} and can be deformed. The previous method can be applied to $X^b$ and the result~\eqref{eq:tangenta} applies to $\gamma'_s$ i.e.:
\begin{multline*}
\frac{d}{dt} \lp |e^{-(\gamma_+)_t}|_{L^2(\S^1)}^2- |e^{-(\gamma_-)_t}|_{L^2(\S^1)}^2 \rp \\
= \Big{\lan} \dot{(\gamma_+)}_t, e^{-2(\gamma_+)_t} \Big{\ran}_{L^2(\S^1)}- \Big{\lan} \dot{(\gamma_-)}_t, e^{-2(\gamma_-)_t} \Big{\ran}_{L^2(\S^1)}= 0,
\end{multline*}
for any $t$ as desired.
\end{proof}

The condition on the values at infinity defining $\Gamma$ is indeed the conservation of the vertical flux in the deformed annuli. Moreover, with the structure of $\Gamma$ given in Proposition~\ref{prop:locala}, the proof of Theorem~\ref{thm:structg} adapts to $\beta$-deformable annuli, giving the following result:

\begin{theorem}

The family of $\beta$-deformable annuli can be endowed with a structure of infinite dimensional smooth manifold.

\end{theorem}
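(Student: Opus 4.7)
The plan is to mimic the proof of Theorem~\ref{thm:structg} in the annular setting: at each $\beta$-deformable annulus, I would build a chart using the deformation machinery developed in this section in the generalized form sketched in Remark~\ref{rk:deformnonrot}, then check that transition maps between overlapping charts are smooth.

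First I would note that the family is non-empty, since $A_{\beta}$ itself is $\beta$-deformable: it is non-degenerate by Proposition~\ref{prop:nondegen}, it admits the cylindrical parametrization~\eqref{eq:paramconfa} (with $b \equiv 0$), and its constant values at infinity trivially satisfy the flux condition. Then, for each $\beta$-deformable annulus $A$ with cylindrical parametrization $X^b$, I would run the generalized analogs of Lemma~\ref{lem:etaa}, Proposition~\ref{prop:lambdaa} and Proposition~\ref{prop:locala} with $A_{\beta}$ replaced by $A$, obtaining smooth maps $\eta_b, \kappa_b$ and a codimension one submanifold $\Gamma_b$ such that $\Uu_b= \Gamma_b \times I_b$ parametrizes the CMC-$1/2$ deformations of $A$ through their data.

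Next I would define a chart by letting $\Vv_b$ be the set of $\beta$-deformable annuli admitting data in $\Uu_b$ with respect to $b$, and $\tau_b: \Vv_b \to \Uu_b$ the map assigning each annulus its data; uniqueness in the implicit function theorem makes $\tau_b$ a bijection. A small point is to check that $\Vv_b$ really consists of $\beta$-deformable annuli: the CMC-$1/2$ condition and the existence of a cylindrical parametrization are built in, non-degeneracy is preserved in a $\Cc^{2, \alpha}$-neighborhood of $A$ by Remark~\ref{rk:nondegen}, and the flux condition is enforced by $\Gamma_b$. Smoothness of transition maps would then follow from the annular analog of~\eqref{eq:lienbasg}: if $A' \in \Vv_b \cap \Vv_{b'}$ has data $(\gamma, \lambda)$ with respect to $b$ and $(\gamma', \lambda')$ with respect to $b'$, then $\gamma' = \gamma$ (both equal the values at infinity of $A'$) and $\lambda'$ is obtained as the $L^2$-projection of $\bigl(\eta_b(\gamma, \lambda)- b'\bigr)/(2c^{b'})- \mu_{b'}(\gamma)$ onto $\R \phi^{b'}$, an expression depending smoothly on $(\gamma, \lambda)$.

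The hard part is not the atlas bookkeeping, which is essentially identical to the proof of Theorem~\ref{thm:structg}, but rather executing the generalization announced in Remark~\ref{rk:deformnonrot}: one must verify that the Green identity of Proposition~\ref{prop:greena}, the description $\ker \ov{L}_0 = \R \phi^b$ of the kernel, and the tangent-space computation~\eqref{eq:tangenta} all go through for an arbitrary cylindrical parametrization $X^b$ of a $\beta$-deformable annulus. The three defining conditions of $\beta$-deformability (cylindrical parametrization, non-degeneracy, and conservation of the vertical flux at the two ends) are precisely what is needed for this, and once the generalization is in hand, the atlas argument above delivers the smooth infinite-dimensional manifold structure.
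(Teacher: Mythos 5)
Your proposal is correct and follows essentially the same route as the paper, which simply observes that the proof of Theorem~\ref{thm:structg} adapts: non-emptiness via $A_{\beta}$, charts $(\Vv_b, \tau_b)$ from the implicit-function-theorem data, and smooth transition maps from the annular analog of~\eqref{eq:lienbasg}. You in fact spell out more of the adaptation (in particular the persistence of non-degeneracy via Remark~\ref{rk:nondegen} and the role of the flux condition) than the paper does.
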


\subsection{Annuli with non aligned ends}

For minimal surfaces in $\R^3$, one can define two Nœther vector-invariants associated to isometries, namely the flux ---~associated to translations~--- and the torque ---~associated to rotations. In the case of a minimal catenoidal end with growth $\alpha> 0$ and vertical axis $\lac x_1= u, x_2= v \rac$, the flux and the torque are respectively $(0, 0, 2\pi \alpha)$ and $2\pi \alpha (v, -u, 0)$. In other words, the growth and the position of the axis of the end are determined by the vertical component of the flux and horizontal components of the torque.

In $\H^2 \times \R$, Nœther invariants are constructed similarly but the torque is not a vector anymore, since remain only rotations around vertical axis. In the case of a vertical rotational end with parameter $\beta> 0$, the flux is vertical with third component $\beta$ and the torque is always zero, no matter where the rotation axis is situated. The fact the position of the axis is no longer caught by Nœther invariants, indicates that the construction of CMC-$1/ 2$ annuli with vertical ends should be more flexible regarding the relative positions of the axis of the ends.

\begin{theorem} \label{thm:nonalign}

There exist a CMC-$1/ 2$ annulus in $\H^2 \times \R$ with vertical ends such that each end of the annulus is asymptotic ---~regarding the horizontal hyperbolic distance~--- to a rotational example and the (vertical) axes of the rotational asymptotes are different.

\end{theorem}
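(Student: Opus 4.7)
The plan is to use the deformation theory of Section~\ref{sec:ann} with values at infinity chosen so that each end of the resulting annulus has the asymptotic signature of a rotational annulus whose axis has been translated to a prescribed point, and to arrange that the two chosen axes are distinct. Fix $\beta \neq 1$ so that $A_\beta$ is non-degenerate (Proposition~\ref{prop:nondegen}) and will serve as model. For $\beta'$ close to $\beta$ and $w \in \D$ close to $0$, the proof of Proposition~\ref{prop:invisom} shows that translating $A_{\beta'}$ by the horizontal hyperbolic isometry $\psi_w(z) = (z + w)/(1 + \overline{w} z)$ keeps it in $\Ee$ and transforms its asymptotic exponent to
\[
\gamma^{w, \beta'}(\theta) = \eta_{\beta'} + \log \frac{|1 - \overline{w}\,e^{i\theta}|}{\sqrt{1-|w|^2}},
\]
where $\eta_{\beta'}$ is the constant asymptotic value of the untranslated $A_{\beta'}$. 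Transcribing this into the cylindrical parametrization of Section~\ref{sec:ann} yields a pair $(\gamma_+^{w, \beta'}, \gamma_-^{w, \beta'}) \in (\Cc^{2,\alpha}(\S^1))^2$ depending smoothly on $(w, \beta')$ and tending to $(0, 0)$ as $(w, \beta') \to (0, \beta)$.

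The key algebraic fact is flux invariance: by the Poisson kernel identity $\int_0^{2\pi} \frac{1-|w|^2}{|1-\overline{w}\,e^{i\theta}|^2}\, d\theta = 2\pi$, the norm $|e^{-\gamma_\pm^{w, \beta'}}|_{L^2(\S^1)}$ depends only on $\beta'$. In particular, for any pair $w_1, w_2 \in \D$ and any common $\beta'$, the mixed pair $(\gamma_+^{w_1, \beta'}, \gamma_-^{w_2, \beta'})$ satisfies the flux equality $|e^{-\gamma_+}|_{L^2(\S^1)} = |e^{-\gamma_-}|_{L^2(\S^1)}$, i.e.\ lies in the ambient flux locus $M \subset (\Cc^{2,\alpha}(\S^1))^2$ appearing in Proposition~\ref{prop:locala}.

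I next claim that $\Gamma = M$ in a neighborhood of $(0, 0)$. Proposition~\ref{prop:locala} already gives $\Gamma \subset M$ and identifies $\Gamma$ as a smooth codimension-$1$ submanifold. A direct derivative of $(\gamma_+, \gamma_-) \mapsto |e^{-\gamma_+}|_{L^2(\S^1)}^2 - |e^{-\gamma_-}|_{L^2(\S^1)}^2$ at the base point shows $M$ is also a smooth codimension-$1$ submanifold, and the tangent-space formula~\eqref{eq:tangenta} identifies $T_{(0,0)} \Gamma$ with $T_{(0,0)} M$. Two smooth codimension-$1$ submanifolds one contained in the other and with equal tangent spaces at a point must coincide in a neighborhood of that point, proving the claim. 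Now fix $\beta'$ close to $\beta$ with $\beta' \neq 1$ and two distinct points $w_1 \neq w_2 \in \D$ close to $0$: the mixed pair $(\gamma_+^{w_1, \beta'}, \gamma_-^{w_2, \beta'})$ lies in $M$, hence in $\Gamma$, so the deformation machinery of Section~\ref{sec:ann} produces a CMC-$1/2$ annulus $A$ realizing precisely these boundary traces.

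It remains to read off the asymptotic axes of $A$. Each end of $A$ has the same value at infinity as the translated rotational $\psi_{w_i}(A_{\beta'})$; reproducing the asymptotic-distance computation at the end of the proof of Proposition~\ref{prop:graphdeform} (matching boundary traces forces zero asymptotic horizontal hyperbolic distance) shows that the $i$-th end of $A$ is asymptotic to $\psi_{w_i}(A_{\beta'})$ in the desired sense. Since $w_1 \neq w_2$, the two rotational asymptotes have distinct vertical axes. The main obstacle in this proof is the local equality $\Gamma = M$: without it, the abstract deformation theory would not guarantee that the geometrically meaningful mixed data coming from non-aligned translates of $A_{\beta'}$ actually lies in $\Gamma$, and the construction would collapse. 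Everything else is either a direct computation (Steps 1 and 2) or a bookkeeping application of the implicit function theorem already set up in Section~\ref{sec:ann}.
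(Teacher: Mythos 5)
Your proof is correct and follows essentially the same route as the paper: translate a rotational end horizontally, observe via the Poisson kernel that the flux norm $|e^{-\gamma}|_{L^2(\S^1)}$ is unchanged, and invoke the deformation machinery of Section~\ref{sec:ann} to realize the resulting mixed boundary data. The only differences are cosmetic---the paper translates just the top end (data $(\gamma_\epsilon, 0)$, with $\beta$ fixed) and leaves implicit the local identification of $\Gamma$ with the flux locus, which you make explicit via the equal-codimension argument.
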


\begin{proof}
Fix $\beta> 0$, $\beta \neq 1$. From Proposition~\ref{prop:invisom}, we know that, in the Poincar{\'e} disk model~\eqref{eq:modpoinc}, a horizontal translation of $w_{\epsilon}= \epsilon e^{i\theta_0} \in \D^*$ changes the top value at infinity of the rotational annulus $A_{\beta}$ into:
\[
\gamma_{\epsilon}(\theta)= \log \lp \frac{|1- \epsilon e^{i(\theta- \theta_0)}|}{\sqrt{1- \epsilon^2}} \rp.
\]
A direct computation shows:
\[
|e^{-\gamma_{\epsilon}}|_{L^2(\S^1)}= 1 \quad \text{and} \quad |\gamma_{\epsilon}|_{\Cc^{2, \alpha}(\S^1)} \leq C\epsilon \quad \text{with} \quad C \in \R_+^*.
\]
Thus, for $\epsilon$ sufficiently small, we have $\big{(} (\gamma_{\epsilon}, 0), 0 \big{)} \in \Uu_0$ and the CMC-$1/ 2$ annulus $X^{\eta_0 \big{(} (\gamma_{\epsilon}, 0), 0 \big{)}}$ exists.

Moreover, the top end of $X^{\eta_0 \big{(} (\gamma_{\epsilon}, 0), 0 \big{)}}$ is asymptotic to the top end of the image of $S_0$ under the horizontal translation by $w_{\epsilon}$ ---~since it has the same value at infinity~--- and is therefore asymptotically rotational. Similarly, the bottom end of $X^{\eta_0 \big{(} (\gamma_{\epsilon}, 0), 0 \big{)}}$ is asymptotically rotational, being asymptotic to the bottom end of $S_0$.

And finally, the ends of $X^{\eta_0 \big{(} (\gamma_{\epsilon}, 0), 0 \big{)}}$ are not aligned since the axis of the top end is $\lac w_{\epsilon} \rac \times \R$ and the one of the bottom end is $\lac 0 \rac \times \R$.
\end{proof}

\begin{remark}

In the proof of Theorem~\ref{thm:nonalign}, we see that the ends of the constructed annulus are asymptotic to \emph{the same} rotational example, up to isometries. This is indeed a necessary condition since the ends have to preserve the vertical flux, which is determined by the parameter $\beta$ of the rotational annulus ---~namely, the vertical flux of the annulus $A_{\beta}$ is $2\pi(1- \beta)$.

\end{remark}

\appendix

\section{Compactification of the mean curvature} \label{sec:comp}

Consider the product metric $\sigma+ dx_3^2$ on $\D \times \R$ where:
\[
\sigma= F^* ds_P^2 \quad \text{and} \quad F: (r, \theta) \in \D \mapsto \frac{2r}{1+ r^2} e^{i\theta} \in \H^2,
\]
in the Poincar{\'e} disk model~\eqref{eq:modpoinc}. To ease the notations, we use indices $1, 2$ for quantities respectively related to coordinates $r, \theta$ on $\D$. In matrix terms, the metric is $\sigma= (\sigma_{ij})$ with:
\begin{gather*}
\sigma_{11}= \frac{16}{(1- r^2)^2}, \quad \sigma_{12}= \sigma_{21}= 0, \quad \sigma_{22}= \frac{16r^2(1+ r^2)^2}{(1- r^2)^4} \\
\text{and} \quad |\sigma|= \lp  \frac{16r(1+ r^2)}{(1- r^2)^3} \rp^2.
\end{gather*}
The Christoffel symbols $(\Gamma_{ij}^k)$ associated to $\sigma$ for the Levi-Civita connection verify:
\[
\Gamma_{ij}^k= \frac{1}{2} \sum_m \sigma^{km} \lp \partial_i \sigma_{jm}+ \partial_j \sigma_{im}- \partial_m \sigma_{ij} \rp,
\]
which means:
\begin{gather*}
\Gamma_{11}^1= \frac{2r}{1- r^2}, \quad \Gamma_{12}^2= \Gamma_{21}^2= \frac{1+ 6r^2+ r^4}{r(1+ r^2)(1- r^2)} \\
\text{and} \quad \Gamma_{22}^1= -\frac{r(1+ r^2)(1+ 6r^2+ r^4)}{(1- r^2)^3},
\end{gather*}
the other terms being zero.

Fix $\Omega \in \Dd$. A surface in $S \in \Ee$ defined on $\Omega$ with graph coordinates at infinity:
\[
(r, \theta) \in \Omega \mapsto \big{(} F(r, \theta), h(\eta) \big{)} \quad \text{with} \quad \eta \in \Cc^{2, \alpha}(\ov{\Omega}) \quad \text{and} \quad h(\eta)= 2e^{\eta} \frac{1+ r^2}{1- r^2},
\]
can be reparametrized as the actual graph of the function $h(\eta): \Omega \into \R$ in $\D \times \R$ endowed with metric $\sigma+ dx_3^2$. As shown by J. Spruck \cite{Sp}, the metric $g(\eta)= (g_{ij}(\eta))$ induced by $h(\eta)$ is given by:
\[
g_{ij}(\eta)= \sigma_{ij}+ \partial_i h(\eta) \partial_j h(\eta),
\]
and denoting $\eta_i= \partial_i \eta$, for $i= 1, 2$, we obtain:
\begin{gather*}
g_{11}(\eta)= \frac{16(1+ r^2)^2 e^{2\eta}}{(1- r^2)^4} \Bigg{[} 1+ \frac{2r\eta_1}{1+ r^2} (1- r^2)+ \lp \frac{\eta_1^2}{4}+ \frac{e^{-2\eta}- 1}{(1+ r^2)^2} \rp (1- r^2)^2 \Bigg{]}, \\
g_{12}(\eta)= \frac{8(1+ r^2)^2 \eta_2 e^{2\eta}}{(1- r^2)^3} \lc \frac{2r}{1+ r^2}+ \frac{\eta_1}{2} (1- r^2) \rc \\
\text{and} \quad g_{22}(\eta)= \frac{16r^2 (1+ r^2)^2}{(1- r^2)^4} \lc 1+ \frac{\eta_2^2 e^{2\eta}}{4r^2} (1- r^2)^2 \rc.
\end{gather*}
The determinant $|g(\eta)|$ of the induced metric is:
\[
|g(\eta)|= \lp \frac{16r(1+ r^2)^2 e^{\eta}}{(1- r^2)^4} \rp^2 w^2(\eta)
\]
with $w(\eta)$ denoting:
\begin{multline*}
w(\eta)= \Bigg{[} 1+ \frac{2r\eta_1}{1+ r^2} (1- r^2)+ \lp \frac{\eta_1^2}{4}+ \frac{e^{-2\eta}- 1}{(1+ r^2)^2} \rp (1- r^2)^2 \\
+\frac{\eta_2^2}{4r^2 (1+ r^2)^2}(1- r^2)^4 \Bigg{]}^{1/ 2}.
\end{multline*}
In the metric $\sigma+ dx_3^2$, the mean curvature $H(\eta)$ of $S$ can be expressed as:
\begin{gather*}
H(\eta)= \frac{1}{2} \Div_{\sigma} \lp \frac{\del_{\sigma} h(\eta)}{W(\eta)} \rp= \frac{1}{2W(\eta)} \sum_{i, j} g^{ij}(\eta) \lp \partial_{ij} h(\eta)- \sum_k \Gamma_{ij}^k \partial_k h(\eta) \rp \\
\text{with} \quad W(\eta)= \sqrt{1+ |\del_{\sigma} h(\eta)|_{\sigma}^2}= \frac{(1+r^2)e^{\eta}}{1- r^2} w(\eta),
\end{gather*}
where the quantities are computed with respect to the metric $\sigma$ on $\D$, and $g^{-1}(\eta)= (g^{ij}(\eta))$.

In order to ease the notations, denote:
\[
H_{ij}(\eta)= g^{ij}(\eta) \lp \partial_{ij} h(\eta)- \sum_k \Gamma_{ij}^k \partial_k h(\eta) \rp.
\]
For $H_{11}(\eta)$, compute:
\begin{align*}
H_{11}(\eta) & = g^{11}(\eta) \lp \partial_{11} h(\eta)- \Gamma_{11}^1 \partial_1 h(\eta) \rp \\
             & = \frac{W(\eta)}{w^3(\eta)} e^{-2\eta} (1- r^2)^2 \Bigg{[} \frac{1}{2(1+ r^2)^2}+ \frac{r\eta_1}{2(1+ r^2)^3} (1- r^2) \\
             & \hspace{1em}+ R_{11} (1- r^2)^2 \Bigg{]}+ 2W(\eta) \frac{g^{11}(\eta)}{w(\eta)} \eta_{11},
\end{align*}
with $R_{11}= R_{11}(r, \eta, D\eta)$ defined on $\Omega \cup \partial \D$, identically zero if $\eta= 0$ and real-analytic in its variables. For $H_{12}(\eta)$:
\begin{align*}
H_{12}(\eta) & = g^{12}(\eta) \lp \partial_{12} h(\eta)- \Gamma_{12}^2 \partial_2 h(\eta) \rp \\
             & = \frac{W(\eta)}{w^3(\eta)} R_{12} (1- r^2)^4+ 2W(\eta) \frac{g^{12}(\eta)}{w(\eta)} \eta_{12},
\end{align*}
again with $R_{12}= R_{12}(r, \eta, D\eta)$ defined on $\Omega \cup \partial \D$, zero if $\eta= 0$ and real-analytic in its variables. And for $H_{22}(\eta)$:
\begin{align*}
H_{22}(\eta) & = g^{22}(\eta) \lp \partial_{22} h(\eta)- \Gamma_{22}^1 \partial_1 h(\eta) \rp \\
             & = \frac{W(\eta)}{w^3(\eta)} \Bigg{[} \frac{1+ 6r^2+ r^4}{2(1+ r^2)^2}+ \frac{(5- 10r^2+ 29r^4)\eta_1}{2r(1+ r^2)^3} (1- r^2) \\
             & \hspace{1em}+ \frac{4r^2}{(1+ r^2)^2} \lp \frac{3\eta_1^2}{4}+ \frac{e^{-2\eta}- 1}{(1+ r^2)^2} \rp (1- r^2)^2+ \frac{\eta_1}{r(1+ r^2)} \Bigg{(} \frac{\eta_1^2}{4} \\
             & \hspace{1em}+ \frac{e^{-2\eta}- 1}{(1+ r^2)^2} \Bigg{)} (1- r^2)^3+ R_{22} (1- r^2)^4 \Bigg{]}+ 2W(\eta) \frac{g^{22}(\eta)}{w(\eta)} \eta_{22},
\end{align*}
with $R_{22}= R_{22}(r, \eta, D\eta)$ defined on $\Omega \cup \partial \D$, zero if $\eta= 0$ and real-analytic in its variables. Hence, a Taylor expansion of the mean curvature $H(\eta)$ is:
\begin{align*}
H(\eta) & = \frac{1}{w(\eta)} \lp g^{11}(\eta) \eta_{11}+ 2g^{12}(\eta) \eta_{12}+ g^{22}(\eta) \eta_{22} \rp \\
        & \hspace{1em}+ \frac{1}{2w^3(\eta)} \Bigg{[} 1+ \frac{3r\eta_1}{1+ r^2} (1- r^2)+ \frac{6r^2}{(1+ r^2)^2} \Bigg{(} \frac{\eta_1^2}{2} \\
        & \hspace{1em}+ \frac{e^{-2\eta}- 1}{(1+ r^2)^2} \Bigg{)} (1- r^2)^2+ \frac{\eta_1}{2r(1+ r^2)} \bigg{(} \frac{\eta_1^2}{2} \\
        & \hspace{1em}+ \frac{3(e^{-2\eta}- 1)}{(1+ r^2)^2} \Bigg{)} (1- r^2)^3 \Bigg{]}+ R_H (1- r^2)^4,
\end{align*}
with as before $R_H= R_H(r, \eta, D\eta)$ defined on $\Omega \cup \partial \D$, identically zero if $\eta= 0$ and real-analytic in its variables.

The Taylor expansion of $w^{-3}(\eta)$ is the following:
\begin{align*}
\frac{1}{w^3(\eta)} & = 1- \frac{3r\eta_1}{1+ r^2} (1- r^2)- \frac{3}{2(1+ r^2)} \lp 4r^2 \eta_1^2+ (e^{-2\eta}- 1) \rp (1- r^2)^2 \\
                    & \hspace{1em}- \frac{5r\eta_1}{(1+ r^2)^3} \lp 2r^2 \eta_1^2- \frac{3(e^{-2\eta}- 1)}{2} \rp (1- r^2)^3+ R_w (1- r^2)^4,
\end{align*}
with $R_w= R_w(r, \eta, D\eta)$ defined on $\Omega \cup \partial \D$, zero if $\eta= 0$ and real-analytic in its variables. Finally, we obtain:
\begin{equation} \label{eq:devh}
H(\eta)= \frac{1}{2}+ \frac{1}{w(\eta)} \lp g^{11}(\eta) \eta_{11}+ 2g^{12}(\eta) \eta_{12}+ g^{22}(\eta) \eta_{22} \rp+ R(1- r^2)^4,
\end{equation}
with $R= R(r, \eta, D\eta)$ defined on $\Omega \cup \partial \D$, identically zero if $\eta= 0$ and real-analytic in its variables.

Taking $\eta= a+ \xi$ with $a, \xi \in \Cc^{2, \alpha}(\ov{\Omega})$, the Taylor expansion \eqref{eq:devh} reads:
\begin{gather*}
H(a+ \xi)= H(a)+ \frac{1}{\sqrt{|g(a)|}} \sum_{i, j} A_{ij} \xi_{ij}+ \frac{1}{\sqrt{|g(a)|}} B, \\
\begin{array}{lr@{~=~}l}
\text{with} & A_{11} & \dst \frac{1}{w(a+ \xi)} \sqrt{|g(a)|} g^{11}(a+ \xi)= \frac{1}{w(a+ \xi)} \frac{g_{22}(a+ \xi)}{\sqrt{|g(a)|}} \espf \\
            &        & e^{-a}+ O(1- r^2), \espf \\
            & A_{12} & \dst \frac{1}{w(a+ \xi)} \sqrt{|g(a)|} g^{12}(a+ \xi)= -\frac{1}{w(a+ \xi)} \frac{g_{12}(a+ \xi)}{\sqrt{|g(a)|}} \espf \\
            &        & O(1- r^2) \espf \\
\text{and}  & A_{22} & \dst \frac{1}{w(a+ \xi)} \sqrt{|g(a)|} g^{22}(a+ \xi)= \frac{1}{w(a+ \xi)} \frac{g_{11}(a+ \xi)}{\sqrt{|g(a)|}} \espf \\
            &        & e^a+ O(1- r^2).
\end{array}
\end{gather*}
Moreover $A_{ij}= A_{ij}(r, a, \xi, D\xi)$ and $B= B(r, a, \xi, D\xi)$ are defined on $\Omega \cup \partial \D$ and real-analytic in their variables, the matrix $A= (A_{ij})$ is coercive on $\Omega \cup \partial \D$, and $B$ is identically zero if $\xi= 0$.

\newpage

\bibliographystyle{amsplain}
\providecommand{\bysame}{\leavevmode\hbox to3em{\hrulefill}\thinspace}
\providecommand{\MR}{\relax\ifhmode\unskip\space\fi MR }
% \MRhref is called by the amsart/book/proc definition of \MR.
\providecommand{\MRhref}[2]{%
  \href{http://www.ams.org/mathscinet-getitem?mr=#1}{#2}
}
\providecommand{\href}[2]{#2}

% \begin{comment}
\vfill

\nidt S{\'e}bastien \textsc{Cartier}, Universit{\'e} Paris-Est, LAMA (UMR 8050), UPEMLV, UPEC, CNRS, F-94010, Cr{\'e}teil, France \\
e-mail: \verb+sebastien.cartier@u-pec.fr+

\medskip

\nidt Laurent \textsc{Hauswirth}, Universit{\'e} Paris-Est, LAMA (UMR 8050), UPEMLV, UPEC, CNRS, F-77454, Marne-la-Vall{\'e}e, France \\
e-mail: \verb+hauswirth@univ-mlv.fr+
% \end{comment}

\end{document}